\newtheorem{theorem}{Theorem}[section]
\newtheorem{lemma}[theorem]{Lemma}
\newtheorem{proposition}[theorem]{Proposition}
\newtheorem{corollary}[theorem]{Corollary}
\theoremstyle{definition}
\newtheorem{definition}[theorem]{Definition}
\theoremstyle{remark}
\numberwithin{equation}{section}
\begin{document}

\setcounter{page}{1}

\title[Fractional Variable Exponents Sobolev Trace Spaces]{Fractional Variable Exponents Sobolev  Trace Spaces}

\author[MOHAMED BERGHOUT 
]{MOHAMED BERGHOUT $ ^{*} $\\
	Ibn Tofail University –Kenitra – Morocco\\
	B.P.242-Kenitra 14000 ESEF.\\
	Mohamed.berghout@uit.ac.ma\\
	 moh.berghout@gmail.com}



\dedicatory{\textbf {This paper is dedicated to my mother with deep estimate and  love} }

\subjclass[2010]{Primary 46E35,31C45, 31C15 }

\keywords{Fractional Sobolev spaces with variable exponents, trace spaces, relative capacity, quasicontinuity, removable sets}

\date{21 august 2020
\newline \indent $^{*}$Corresponding author}

\begin{abstract}
We introduce and study fractional variable exponents Sobolev trace spaces on any open set in the Euclidean space equipped with the Lebesgue measure. We show that every equivalence class of Sobolev functions has a quasicontinuous representatives. We use the relative capacity to characterize completely the zero trace fractional variable exponents Sobolev spaces. We also give a relative capacity criterium for removable sets.
\end{abstract} 
\maketitle
\section{\textbf {Introduction}}
The purpose of this paper is to introduce the fractional variable exponent Sobolev trace spaces and to characterize the traces of the Sobolev functions on any open set in the Euclidean space equipped with the Lebesgue measure. The motivation for this study is twofold. First we are interested in developing the theory of variable exponents Sobolev trace spaces in the fractional case, to this end, it is crucial that we can know the traces of Sobolev functions on the boundary of the set of definition. On the other hand, we would like to present a general theory which covers applications to fractional and nonlocal operators of elliptic type, see for example \cite{AB2,Del pezzo,UK} and references therein.

Functions spaces with variable exponent have been intensely investigated in the recent years. One of such spaces is the Lebesgue and Sobolev spaces with variable exponent.
They were introduced by W. Orlicz in 1931 \cite{Orlicz}; their properties were further developed by H. Nakano as special cases of the theory of modular spaces \cite{Nakano}. In the ensuing decades they were primarily considered as important examples of modular spaces or the class of Musielak–Orlicz spaces. In the beginning these spaces had theoretical interest. Later, at the end of the last century, their first use beyond the function spaces theory itself, was in variational problems and studies of $p(.)$-Laplacian operator, which in its turn gave an essential impulse for the development of this theory. For more details on these spaces,  see the monographs \cite{Hasto,kokivalish1,kokivalish2}.

We now give the main results of the paper. First, we introduce some notations which will be observed in this paper. Throughout this paper we will use the following notations: $\mathbb{R}^{n}$ is the $n$-dimens\-ional Euclidean space, and $n\in \mathbb{N}$ always stands for the dimension of the space. $\Omega \subset \mathbb{R}^{n}$ is a open set equipped with the $n$-dimensional Lebesgue measure. For constants we use the letter $C$ whose value may change even within a string of estimates. The ball with radius $ r $ and center $ x\in \mathbb{R}^{n}$  will be denoted by 
$ B(x,r) $. The closure of a set $A$ is denoted by $\overline{A}$ and the  topological boundary of $ A $ is denoted by $ \partial A $. The complement of $ A $ will be denoted by $ A^{c}$. We use the usual convention of identifying two $\mu$-measurable function on $A$ (a.e. in $A$, for short) if they agree almost everywhere, i.e. if they agree up to a set of $\mu$-measure zero. The characteristic function of a set $ E \subset A $ will be denoted by $ \chi_{E} $.  The Lebesgue integral of a Lebesgue measurable function $f:\Omega\longrightarrow \mathbb{R}$, is defined in the standard way and denoted by $\displaystyle\int_{\Omega} f(x)\ dx$. We use the symbol $ := $ to define the left-hand side by the right-hand side. For measurable functions $u,v: \Omega\longrightarrow \mathbb{R}$, we set  $u^{+}:=\max \left\lbrace u,0\right\rbrace $ and 
$u^{-}:=\max \left\lbrace -u,0\right\rbrace$. We denote by $L^{0}(\Omega)$ the space of all $\mathbb{R}$-valued measurable functions on $\Omega$. We denote by $\mathscr{C}(\Omega)$ the space of continuous functions on $\Omega$. By $\mathscr{C}_{c}(\overline{\Omega})$ we design the space of continuous functions on $\overline{\Omega}$ with compact support in $\overline{\Omega}$. We denote by $\mathscr{C}(\overline{\Omega})$ the space of uniformly continuous functions equipped with the supremum norm $\|f\|_{\infty}=\sup_{x\in\overline{\Omega}}|f(x)|$. By $\mathscr{C}^{k}(\overline{\Omega})$, $k\in \mathbb{N}$, we denote the space of all function $f$, such that $\partial_{\alpha} f :=\dfrac{\partial^{\left| \alpha \right|}f}{\partial^{\alpha_{1}}x_{1}........\partial^{\alpha_{n}}x_{n}} \in \mathscr{C}(\overline{\Omega})$ for all multi-index $\alpha = \left( \alpha_{1},\alpha_{2},...,\alpha_{n} \right)$,  $\left| \alpha \right|:=\alpha_{1}+\alpha_{2}+......+\alpha_{n} \leq k$. The space is equipped with the norm $\sup _{\left| \alpha \right|\leq k} {\left\| \partial_{\alpha} f\right\|}_{\infty}$, $\mathscr{C}^{\infty}(\overline{\Omega})=\bigcap_{k}\mathscr{C}^{k}(\overline{\Omega})$. The set of smooth functions in $\Omega$ is denoted by $\mathscr{C}^{\infty}(\Omega)$ - it consists of functions in $\Omega$ which are continuously differentiable arbitrarily many times. The set $\mathscr{C}^{\infty}_{0}(\Omega)$ is the subset of $\mathscr{C}^{\infty}(\Omega)$ of functions which have compact support.

Next, we introduce variable exponent Lebesgue and fractional Sobolev spaces as an abstract modular spaces. Let $ \Omega \subset\mathbb{R}^{n}$ be an open set. We fix $s\in\left( 0,1\right)$ and we consider two variable exponents, that is, $ q :{ \Omega}\rightarrow \left[ 1,+\infty \right) $ and $ p :{ \Omega}\times\Omega\rightarrow \left[ 1,+\infty \right) $ be two measurable functions. The set of variable exponents $ q :{\Omega}\rightarrow \left[ 1,+\infty \right)$ is denoted by $\mathcal{P}(\Omega)$ and the set of variable exponents  
$p:{ \Omega}\times\Omega\rightarrow \left[ 1,+\infty \right)$ is denoted by $\mathcal{P}(\Omega\times\Omega)$. we set $p^{-}:=$essinf$_{(x,y)\in\Omega\times\Omega}p(x,y)$,  $p^{+}:=$esssup$_{(x,y)\in\Omega\times\Omega}p(x,y)$, $q^{-}:=$essinf$_{x\in\Omega}q(x)$ and $q^{+}:=$esssup$_{x\in\Omega}q(x)$.\\
Throughout this paper we assume that 
\begin{eqnarray*}
	1<p^{-}\leq p(x,y)\leq p^{+} <\infty, 
\end{eqnarray*}
\begin{eqnarray*}
	1<q^{-}\leq q(x)\leq q^{+}<\infty. 
\end{eqnarray*} 
Notice that by \cite[ Proposition 4.1.7]{Hasto}, we can extend $q$ and $p$ to all of $\mathbb{R}^{n}$ and $\mathbb{R}^{n}\times \mathbb{R}^{n}$ respectively.

 The variable exponent Lebesgue space $ L^{p(.)}(\Omega) $ is the family of the equivalence classes of functions defined by
\begin{equation*}
	L^{p(.)}(\Omega):=\left\lbrace u\in L^{0}(\Omega):\rho_{p(.)}(\lambda u)=\int_{\Omega}\left|\lambda u(x)\right|^{p(x)} \ dx<\infty, \  
	\text{for some} \ \lambda>0\right\rbrace. 
\end{equation*}
The function $
\rho _{p(.)}: L^{p(.) }(\Omega)\longrightarrow
\left[ 0,\infty \right)$ is called the modular of the space 
$L^{p(.)}(\Omega)$. We define a norm, the so-called Luxembourg norm, in this space by 
\begin{equation*}
	\left\| u\right\|_{L^{p(.) }}=\inf \left\lbrace  \lambda >0:\rho
	_{p(.) }\left( \frac{u}{\lambda }\right) \leq 1\right\rbrace.
\end{equation*}
We define the fractional Sobolev space with variable exponents as follows:
\begin{eqnarray*}
	\begin{array}{l}
		\mathcal{W}^{s,q(.),p(.,.)}(\Omega):=
		\left\lbrace u  \in L^{q(.)}(\Omega): \dfrac{\left|u(x)-u(y)\right|  }{\left| x-y\right| ^{s+\frac{n}{p(.,.)}}}\in L^{p(.,.)}(\Omega\times\Omega) \right\rbrace.
	\end{array}
\end{eqnarray*}
We define a modular on $\mathcal{W}^{s,q(.),p(.,.)}(\Omega)$ by 
\begin{eqnarray*}
	\rho^{s,\Omega}_{q(.),p(.,.)}(u):=\int_{\Omega}\left| u(x)\right|^{q(x)}\ dx+\int_{\Omega}\int_{\Omega} \frac{\left|u(x)-u(y)\right|^{p(x,y)}}{\left|x-y\right|^{n+ sp(x,y)}}\ dx \  dy.	
\end{eqnarray*}
Let
\begin{eqnarray*}
	\left[ u\right]^{s,p(.,.)}(\Omega) :=\inf \left\lbrace\lambda>0: \displaystyle\int_{\Omega}\displaystyle\int_{\Omega}\dfrac{\left|u(x)-u(y)\right|^{p(x,y)}  }{\lambda^{p(x,y)}\left| x-y\right| ^{n+sp(x,y)}}dxdy \leq 1\right\rbrace,
\end{eqnarray*}
be the corresponding variable exponent Gagliardo semi-norm. It is easy to see that 
$\mathcal{W}^{s,q(.),p(.,.)}(\Omega)$ is a Banach space with the norm 
\begin{eqnarray*}
	\left\|u\right\|_{\mathcal{W}^{s,q(.),p(.,.)}(\Omega)} :=\|u\rVert_{L^{q(.)}(\Omega)}+ \left[ u\right]^{s,p(.,.)}(\Omega).
\end{eqnarray*}
It is clear that ${\mathcal{W}^{s,q(.),p(.,.)}(\Omega)}$ can be seen as a natural extension of the classical fractional Sobolev space. The modular 
$ \rho^{s,\Omega}_{q(.),p(.,.)} $ induces a norm by
\begin{eqnarray*}
	\left\|u\right\|_{ \rho^{s,\Omega}_{q(.),p(.,.)}}:=\inf \left\lbrace \lambda>0: \rho^{s,\Omega}_{q(.),p(.,.)}(\frac{1}{\lambda}u)\leqslant1\right\rbrace,   	
\end{eqnarray*} 
which is equivalent to the norm $\left\|u\right\|_{\mathcal{W}^{s,q(.),p(.,.)}(\Omega)}$. It is clear that $u\in \mathcal{W}^{s,q(.),p(.,.)}(\Omega)$ if and only if 
$ \rho^{s,\Omega}_{q(.),p(.,.)}(u)<\infty$.
 
Note that the space $\mathcal{W}^{s,q(.),p(.,.)}(\Omega)$ is a separable, reflexive and uniformly convex Banach lattice space, we refer to \cite{BA3}.

The definition of the spaces 
$L^{q(.)}(\mathbb{R}^{n})$ and $\mathcal{W}^{s,q(.),p(.,.)}(\mathbb{R}^{n})$ is analogous to $L^{q(.)}(\Omega)$ and $W^{s,q(.),p(.,.)}(\Omega)$; one just changes every occurrence of $\Omega$ by $\mathbb{R}^{n}$. We refer to \cite{BA1,BA,BA2,BA3,UK}.

In the sequel, we define the fractional variable exponent Sobolev zero trace spaces in a proper open subset $ \Omega $ of $ \mathbb{R}^{n}$.
\begin{definition}
	Let $s\in(0,1), q\in \mathcal{P}(\Omega)$ and 
	$p\in\mathcal{P}(\Omega\times\Omega)$. The fractional Sobolev space $\mathcal{W}^{s,q(.),p(.,.)}_{0}(\Omega)$ is the closure of the set of $\mathcal{W}^{s,q(.),p(.,.)}(\Omega)$-functions with compact support, i.e.
	\begin{equation*}
	\left\lbrace u\in \mathcal{W}^{s,q(.),p(.,.)}(\Omega):u=u\chi_{K} \ \text{for a compact} \ K\subset \Omega  \right\rbrace. 
	\end{equation*}
	in $\mathcal{W}^{s,q(.),p(.,.)}(\Omega)$. 
\end{definition}
A subspace $I$ of $ \mathcal{W}^{s,q(.),p(.,.)}(\Omega)$ is called an ideal if for 
$u\in I,\ v\in \mathcal{W}^{s,q(.),p(.,.)}(\Omega)$, $ \left|v \right| \leqslant \left|u \right|$ a.e. implies that $v\in I$. A (real valued) function space is a lattice if the point-wise minimum and maximum of any two of its elements belong to the space. The closed lattice ideals of the Sobolev spaces $\mathcal{W}^{s,q(.),p(.,.)}(\Omega)$ are those subspaces which consist of all functions which vanish on a prescribed set. To be precise, we have the following result:
\begin{theorem}
	Let $s\in(0,1), q\in \mathcal{P}(\Omega)$ and 
	$p\in\mathcal{P}(\Omega\times\Omega)$. Then the space $\mathcal{W}^{s,q(.),p(.,.)}_{0}(\Omega)$ is a closed ideal in $\mathcal{W}^{s,q(.),p(.,.)}(\Omega)$.
\end{theorem}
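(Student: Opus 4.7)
The plan is to exploit the Banach lattice structure of $\mathcal{W}^{s,q(.),p(.,.)}(\Omega)$ recalled just before the statement. Closedness of $\mathcal{W}^{s,q(.),p(.,.)}_{0}(\Omega)$ is immediate from its definition as a closure, so the entire content lies in the ideal property: given $u\in \mathcal{W}^{s,q(.),p(.,.)}_{0}(\Omega)$ and $v\in \mathcal{W}^{s,q(.),p(.,.)}(\Omega)$ with $|v|\leq |u|$ a.e., I need to produce compactly supported elements of $\mathcal{W}^{s,q(.),p(.,.)}(\Omega)$ converging to $v$ in norm.

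By definition, there exist $u_n\in \mathcal{W}^{s,q(.),p(.,.)}(\Omega)$ with $u_n=u_n\chi_{K_n}$ for some compact sets $K_n\subset \Omega$ and $u_n\to u$ in norm. I set
\[
v_n := (|u_n|\wedge v^{+}) - (|u_n|\wedge v^{-}),
\]
where $\wedge$ denotes the pointwise minimum. Since $\mathcal{W}^{s,q(.),p(.,.)}(\Omega)$ is a Banach lattice, $v_n$ lies in $\mathcal{W}^{s,q(.),p(.,.)}(\Omega)$. Wherever $u_n$ vanishes, so does $v_n$, hence $v_n=v_n\chi_{K_n}$ has compact support in $\Omega$ and therefore belongs to $\mathcal{W}^{s,q(.),p(.,.)}_{0}(\Omega)$.

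To conclude, I must show $v_n\to v$ in $\mathcal{W}^{s,q(.),p(.,.)}(\Omega)$. From $u_n\to u$ and the norm continuity of the modulus and of the lattice operations on a Banach lattice, we get $|u_n|\to |u|$ and hence $|u_n|\wedge v^{\pm}\to |u|\wedge v^{\pm}$. The hypothesis $|v|\leq |u|$ yields $v^{\pm}\leq |u|$ a.e., so $|u|\wedge v^{\pm}=v^{\pm}$. Therefore $v_n\to v^{+}-v^{-}=v$, and the closedness of $\mathcal{W}^{s,q(.),p(.,.)}_{0}(\Omega)$ completes the proof.

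The only substantive input is the norm continuity of the lattice operations with respect to the Gagliardo-type norm, which is exactly the Banach lattice assertion taken from \cite{BA3}. If one wanted to avoid this black box, the main technical step would be to estimate the Gagliardo double integral of $v_n-v$ by writing $v_n(x)=\operatorname{med}(-|u_n(x)|,v(x),|u_n(x)|)$, using the $1$-Lipschitz character of the median to get a pointwise bound
\[
|v_n(x)-v_n(y)| \;\leq\; \max\bigl(|u_n(x)-u_n(y)|,\,|v(x)-v(y)|\bigr),
\]
and then applying a dominated convergence argument in the modular $\rho^{s,\Omega}_{q(.),p(.,.)}$ — this is where the delicate book-keeping lies.
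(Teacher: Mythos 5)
Your argument is correct and follows the same underlying strategy as the paper's proof (truncate an approximating sequence for $u$ to manufacture one for $v$), but it is tighter in two places where the paper's version is loose. First, the paper approximates $u$ by $\varphi_n\in\mathscr{C}^{\infty}_{0}(\Omega)$, whereas $\mathcal{W}^{s,q(.),p(.,.)}_{0}(\Omega)$ is by definition the closure of compactly supported $\mathcal{W}^{s,q(.),p(.,.)}$-functions; density of $\mathscr{C}^{\infty}_{0}(\Omega)$ in that closure is a separate nontrivial fact (it is the content of Theorem \ref{density condition}, which needs extra hypotheses). Your choice $u_n=u_n\chi_{K_n}$ with $u_n\in\mathcal{W}^{s,q(.),p(.,.)}(\Omega)$ is exactly what the definition supplies. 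Second, the paper's truncation $v_n=\min\{v,\varphi_n\}$ has compact support only if $v\geq 0$, and $\min\{v,u\}=v$ only if $v\leq u$, so the paper is implicitly reducing to the case $0\leq v\leq u$ without saying so; your median construction $v_n=\operatorname{med}(-|u_n|,v,|u_n|)$ treats the full hypothesis $|v|\leq|u|$ directly and still vanishes off $K_n$.

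Both proofs then hinge on the norm continuity of the lattice operations on $\mathcal{W}^{s,q(.),p(.,.)}(\Omega)$, which you rightly flag as the one substantive black box. Note that this is not automatic from the space being called a ``Banach lattice'': pointwise domination $|w_1|\leq|w_2|$ does \emph{not} bound the Gagliardo double integral of $w_1$ by that of $w_2$, so the $\mathcal{W}^{s,q(.),p(.,.)}$-norm is not a lattice norm and the usual one-line Lipschitz argument for $u\mapsto|u|$ does not apply. Either cite a precise continuity statement from \cite{BA3}, or carry out the hands-on route you sketch: the $1$-Lipschitz estimate for the median gives
\[
|v_n(x)-v_n(y)|\;\leq\;\max\bigl(|u_n(x)-u_n(y)|,\,|v(x)-v(y)|\bigr),
\]
which shows $[v_n]^{s,p(.,.)}$ is controlled, and then a Vitali/dominated-convergence argument (after passing to a subsequence with $u_n\to u$ a.e.) on the modular $\rho^{s,\Omega}_{q(.),p(.,.)}(v_n-v)$ finishes it. The paper makes the same appeal silently, so this is not a gap specific to your write-up, but since you explicitly identify it as the crux you should not leave it as a black box.
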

We denote by $ \mathcal{{H}}^{s,q(.),p(.,.)}_{0}(\Omega) $ the closure of $\mathscr{C}_{0}^{\infty}(\Omega)$ in 
$\mathcal{W}^{s,q(.),p(.,.)}(\Omega)$. For $s\in(0,1), q\in \mathcal{P}(\Omega)$ and $p\in\mathcal{P}(\Omega\times\Omega)$, we let
\begin{eqnarray*}
	\mathcal{\tilde{{W}}}^{s,q(.),p(.,.)}(\Omega):= \overline{{\mathcal{W}}^{s,q(.),p(.,.)}(\Omega)\cap \mathscr{C}_{c}(\overline{\Omega})}^{{\mathcal {W}}^{s,q(.),p(.,.)}(\Omega)}
\end{eqnarray*} 
The following result shows that the spaces $\mathcal{{H}}^{s,q(.),p(.,.)}_{0}(\Omega)$, $ \mathcal{\tilde{{W}}}^{s,q(.),p(.,.)}(\Omega) $ and $ \mathcal{W}^{s,q(.),p(.,.)}_{0}(\Omega) $ may coincide.
 \begin{theorem}
	Let $s\in(0,1), q\in \mathcal{P}(\Omega)$ and 
	$p\in\mathcal{P}(\Omega\times\Omega)$. Assume that $ \mathcal{W}^{s,q(.),p(.,.)}(\Omega)\cap \mathscr{C}^{\infty}(\overline{\Omega})$ is dense in 
	$ \mathcal{W}^{s,q(.),p(.,.)}(\Omega)$. 
	Then
	\begin{equation*}
	\mathcal{{H}}^{s,q(.),p(.,.)}_{0}(\Omega)= \mathcal{W}^{s,q(.),p(.,.)}_{0}(\Omega)=\mathcal{\tilde{{W}}}^{s,q(.),p(.,.)}(\Omega).
	\end{equation*}  
\end{theorem}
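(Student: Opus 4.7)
The plan is to close the four inclusions $\mathcal{H}^{s,q(.),p(.,.)}_0(\Omega)\subseteq\mathcal{W}^{s,q(.),p(.,.)}_0(\Omega)\subseteq\mathcal{H}^{s,q(.),p(.,.)}_0(\Omega)$ and $\mathcal{H}^{s,q(.),p(.,.)}_0(\Omega)\subseteq\mathcal{\tilde{W}}^{s,q(.),p(.,.)}(\Omega)\subseteq\mathcal{H}^{s,q(.),p(.,.)}_0(\Omega)$. The two forward inclusions are immediate once one observes that every $\varphi\in\mathscr{C}_0^\infty(\Omega)$ lies in $\mathcal{W}^{s,q(.),p(.,.)}(\Omega)$ with compact support in $\Omega$, and, extended by zero to $\overline\Omega$, is continuous with compact support in $\overline\Omega$; taking closures in $\mathcal{W}^{s,q(.),p(.,.)}(\Omega)$ gives $\mathcal{H}^{s,q(.),p(.,.)}_0(\Omega)\subseteq\mathcal{W}^{s,q(.),p(.,.)}_0(\Omega)$ and $\mathcal{H}^{s,q(.),p(.,.)}_0(\Omega)\subseteq\mathcal{\tilde{W}}^{s,q(.),p(.,.)}(\Omega)$. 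The two reverse inclusions are where the density hypothesis enters.

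For $\mathcal{W}^{s,q(.),p(.,.)}_0(\Omega)\subseteq\mathcal{H}^{s,q(.),p(.,.)}_0(\Omega)$, I would fix $u\in\mathcal{W}^{s,q(.),p(.,.)}(\Omega)$ with $u=u\chi_K$ for a compact $K\subset\Omega$ and pick $\varphi\in\mathscr{C}_0^\infty(\Omega)$ with $\varphi\equiv 1$ on a neighborhood of $K$, so that $\varphi u=u$. The density hypothesis yields $w_n\in\mathcal{W}^{s,q(.),p(.,.)}(\Omega)\cap\mathscr{C}^\infty(\overline\Omega)$ with $w_n\to u$; then $\varphi w_n\in\mathscr{C}_0^\infty(\Omega)$ and the goal is $\varphi w_n\to\varphi u=u$ in $\mathcal{W}^{s,q(.),p(.,.)}(\Omega)$. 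This reduces to the boundedness of the multiplier $v\mapsto\varphi v$ on $\mathcal{W}^{s,q(.),p(.,.)}(\Omega)$, which I would prove by splitting
$$\varphi(x)v(x)-\varphi(y)v(y)=\varphi(x)\bigl(v(x)-v(y)\bigr)+v(y)\bigl(\varphi(x)-\varphi(y)\bigr),$$
and combining $\|\varphi\|_\infty<\infty$ and the Lipschitz bound $|\varphi(x)-\varphi(y)|\leq L|x-y|$ with the finiteness of $\int_{|z|\leq 1}|z|^{p(x,y)(1-s)-n}\,dz$, which converges because $s<1$.

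For $\mathcal{\tilde{W}}^{s,q(.),p(.,.)}(\Omega)\subseteq\mathcal{H}^{s,q(.),p(.,.)}_0(\Omega)$, I would fix $u\in\mathcal{W}^{s,q(.),p(.,.)}(\Omega)\cap\mathscr{C}_c(\overline\Omega)$, choose an exhausting sequence of compacta $K_n\uparrow\Omega$ and cutoffs $\eta_n\in\mathscr{C}_0^\infty(\Omega)$ with $0\leq\eta_n\leq 1$ and $\eta_n\equiv 1$ on $K_n$, and set $u_n:=\eta_n u$. Each $u_n$ has compact support in $\Omega$, hence $u_n\in\mathcal{W}^{s,q(.),p(.,.)}_0(\Omega)\subseteq\mathcal{H}^{s,q(.),p(.,.)}_0(\Omega)$ by the preceding step. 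It remains to prove $u_n\to u$ in $\mathcal{W}^{s,q(.),p(.,.)}(\Omega)$: the $L^{q(.)}$-convergence follows from dominated convergence, while for the Gagliardo seminorm I would decompose
$$(1-\eta_n(x))u(x)-(1-\eta_n(y))u(y)=(1-\eta_n(x))\bigl(u(x)-u(y)\bigr)+u(y)\bigl(\eta_n(y)-\eta_n(x)\bigr),$$
treat the first summand by dominated convergence in the double integral (its integrand is pointwise bounded by the Gagliardo integrand of $u$ and vanishes a.e.), and tackle the second one by first applying the density hypothesis to replace $u$ by $\tilde u\in\mathcal{W}\cap\mathscr{C}^\infty(\overline\Omega)$ and then invoking the multiplier estimate from the previous step.

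The hard part will be this last Gagliardo-seminorm convergence, specifically the contribution of $u(y)(\eta_n(y)-\eta_n(x))$: since the Lipschitz norms of $\eta_n$ necessarily blow up as the cutoffs concentrate near $\partial\Omega$, no crude bound works uniformly in $n$. The resolution is to use the density hypothesis to obtain smooth surrogates $\tilde u_m\to u$ in $\mathcal{W}^{s,q(.),p(.,.)}(\Omega)$, apply the multiplier estimate from the preceding step to each $\eta_n\tilde u_m\in\mathscr{C}_0^\infty(\Omega)$, and extract a diagonal subsequence in $\mathscr{C}_0^\infty(\Omega)$ that converges to $u$ in $\mathcal{W}^{s,q(.),p(.,.)}(\Omega)$.
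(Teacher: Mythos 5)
Your forward inclusions and your proof of $\mathcal{W}^{s,q(.),p(.,.)}_0(\Omega)\subseteq\mathcal{H}^{s,q(.),p(.,.)}_0(\Omega)$ are essentially what the paper does: fix a cutoff $\varphi\in\mathscr{C}^\infty_0(\Omega)$ equal to $1$ on the support of $u$, multiply it against the smooth approximants $w_n$ furnished by the density hypothesis, and invoke a multiplier estimate for $v\mapsto\varphi v$ on $\mathcal{W}^{s,q(.),p(.,.)}(\Omega)$. That part is sound and matches the paper's argument.

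The genuine gap is in your attempt to prove $\tilde{\mathcal{W}}^{s,q(.),p(.,.)}(\Omega)\subseteq\mathcal{H}^{s,q(.),p(.,.)}_0(\Omega)$. You correctly identify the obstruction --- showing $\eta_n u\to u$ in the Gagliardo seminorm when the cutoffs $\eta_n$ concentrate near $\partial\Omega$ --- but your proposed fix does not touch it. For a diagonal argument in the double array $\{\eta_n\tilde u_m\}$ to converge to $u$, you must drive to zero either $\left\|\eta_n u-u\right\|_{\mathcal{W}^{s,q(.),p(.,.)}(\Omega)}$ (letting $m\to\infty$ first) or $\left\|(1-\eta_n)\tilde u_m\right\|_{\mathcal{W}^{s,q(.),p(.,.)}(\Omega)}$ (letting $n\to\infty$ first). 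Neither tends to zero in general: the first is precisely what you concede you cannot prove, and the second inherits the same defect because $\tilde u_m$, lying only in $\mathscr{C}^\infty(\overline\Omega)$, need not vanish near $\partial\Omega$, so its product with $1-\eta_n$ is again a function supported in a shrinking collar but with a boundary trace that does not disappear. The difficulty is structural, not merely a matter of estimates: if $\Omega$ is bounded then $u\equiv 1$ lies in $\mathcal{W}^{s,q(.),p(.,.)}(\Omega)\cap\mathscr{C}_c(\overline\Omega)$, yet $\eta_n\cdot 1$ cannot converge to $1$ in the Gagliardo seminorm whenever that seminorm detects the boundary; in the constant-exponent case with $sp>1$ this is exactly the well-known fact $W^{s,p}_0(\Omega)\neq W^{s,p}(\Omega)$, and the density hypothesis holds there.

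For comparison, the paper does not attempt the inclusion $\tilde{\mathcal{W}}^{s,q(.),p(.,.)}(\Omega)\subseteq\mathcal{W}^{s,q(.),p(.,.)}_0(\Omega)$ at all: its proof opens with the unargued assertion that density of $\mathcal{W}^{s,q(.),p(.,.)}(\Omega)\cap\mathscr{C}^\infty(\overline\Omega)$ ``clearly'' implies $\mathcal{W}^{s,q(.),p(.,.)}_0(\Omega)=\tilde{\mathcal{W}}^{s,q(.),p(.,.)}(\Omega)$, and then proves only $\mathcal{W}_0=\mathcal{H}_0$. So the step you singled out as the hard one is, in the paper, simply declared. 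Your instinct to try to supply the missing argument was correct, but the diagonal-subsequence device you propose does not close it and, absent a further hypothesis controlling the capacity of $\partial\Omega$, cannot.
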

Now, we introduce the fractional Sobolev $(s,q(.),p(.,.))$-capacity, in which the capacity of a set is taken relative to a open subset. 
Let $O\subset \overline{\Omega}$ be a relatively open set, that is, open with respect to the relative topology of $\overline{\Omega}$. Let $s\in(0,1), q\in \mathcal{P}(\Omega)$ and $p\in\mathcal{P}(\Omega\times\Omega)$. We denote
\begin{eqnarray*}
	\mathcal{R}^{s, \overline{\Omega}}_{q(.),p(.,.)}(O):=\left\lbrace u\in \tilde{{W}}^{s,q(.),p(.,.)}(\Omega): u\geq 1 \ \textnormal{ a.e. on}\ O \right\rbrace.    	
\end{eqnarray*} 
We define the fractional relative $(s,q(.),p(.,.))$-capacity of $O$, with respect to $\Omega$, by
\begin{eqnarray*}
	C_{q(.),p(.,.)}^{s,\Omega}(O):=\inf_{u\in\mathcal{R}^{s, \overline{\Omega}}_{q(.),p(.,.)}(O)}{\rho}_{q(.),p(.,.)}^{s,\Omega}(u).	   	
\end{eqnarray*}
For any set $ E\subset \overline{\Omega}$,
\begin{eqnarray*} 
	C_{q(.),p(.,.)}^{s,\Omega}(E):=\inf\left\lbrace C_{q(.),p(.,.)}^{s,\Omega}(O): O \ \textnormal{relatively open in}\ \overline{\Omega} \ \textnormal{containing}\ E  \right\rbrace. 
\end{eqnarray*}
Next, we give a characterization of $\mathcal{{H}}^{s,q(.),p(.,.)}_{0}(\Omega)$
and a necessary and sufficient condition in terms of the $ (s,q(.),p(.,.))$-relative capacity for the equality
$\mathcal{{H}}^{s,q(.),p(.,.)}_{0}(\Omega)=\mathcal{\tilde{{W}}}^{s,q(.),p(.,.)}(\Omega)$ to be true.
\begin{theorem}
	Let $\Omega\subset \mathbb{R}^{n}$ be an open set, $s\in(0,1)$, $q\in \mathcal{P}(\Omega)$ and 
	$p\in\mathcal{P}(\Omega\times\Omega)$. Then
	\begin{eqnarray*}
		\mathcal{H}^{s,q(.),p(.,.)}_{0}(\Omega)=\left\lbrace u\in \tilde{\mathcal{W}}^{s,q(.),p(.,.)}(\Omega):\ \tilde{u}=0 \ (s,q(.),p(.,.))\textnormal{-r.q.e. on} \ \partial\Omega   \right\rbrace, 	
	\end{eqnarray*} 
	where $\tilde{u}$ denotes the $(s,q(.),p(.,.))$-relatively quasicontinuous representative of $u$.
\end{theorem}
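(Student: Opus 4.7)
The plan is to prove the two inclusions separately. For $\mathcal{H}^{s,q(.),p(.,.)}_{0}(\Omega)\subseteq\{u\in\tilde{\mathcal{W}}^{s,q(.),p(.,.)}(\Omega):\tilde{u}=0\ \textnormal{r.q.e.\ on }\partial\Omega\}$, I would start from $u\in\mathcal{H}^{s,q(.),p(.,.)}_{0}(\Omega)$ and pick $\phi_{k}\in\mathscr{C}^{\infty}_{0}(\Omega)$ with $\phi_{k}\to u$ in $\mathcal{W}^{s,q(.),p(.,.)}(\Omega)$. Each $\phi_{k}$ extends by zero to a continuous function on $\overline{\Omega}$ vanishing identically on $\partial\Omega$. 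A Borel--Cantelli/Chebyshev argument on the relative capacity of the level sets $\{|\phi_{k}-\phi_{\ell}|>2^{-k}\}$ (the standard quasicontinuity extraction developed earlier in the paper) produces a subsequence $\phi_{k_{j}}\to\tilde{u}$ outside a set of $(s,q(.),p(.,.))$-relative capacity zero in $\overline{\Omega}$; restricting to $\partial\Omega$ yields $\tilde{u}=0$ r.q.e.\ there.

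For the reverse inclusion, let $u\in\tilde{\mathcal{W}}^{s,q(.),p(.,.)}(\Omega)$ satisfy $\tilde{u}=0$ r.q.e.\ on $\partial\Omega$. Since the space is a Banach lattice, splitting $u=u^{+}-u^{-}$ reduces to the case $u\geq 0$. For $\eta>0$ the truncation $u_{\eta}:=(u-\eta)^{+}\wedge\eta^{-1}$ lies in $\tilde{\mathcal{W}}^{s,q(.),p(.,.)}(\Omega)$ and satisfies $u_{\eta}\to u$ in norm as $\eta\to 0^{+}$ by modular dominated convergence. The exceptional set $N:=\{x\in\partial\Omega:\tilde{u}(x)\neq 0\}$ has relative capacity zero, so for any $\varepsilon>0$ I can choose a relatively open $O_{\varepsilon}\subset\overline{\Omega}$ with $N\subset O_{\varepsilon}$ and $C^{s,\Omega}_{q(.),p(.,.)}(O_{\varepsilon})<\varepsilon$, together with a near-optimal $\psi_{\varepsilon}\in\mathcal{R}^{s,\overline{\Omega}}_{q(.),p(.,.)}(O_{\varepsilon})$ whose quasicontinuous representative $\tilde{\psi}_{\varepsilon}$ is $\geq 1$ r.q.e.\ on $O_{\varepsilon}$. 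The function $w_{\varepsilon}:=(1-\psi_{\varepsilon})^{+}u_{\eta}$ then agrees with $u_{\eta}$ up to a norm error that shrinks with $\varepsilon$, and its quasicontinuous representative vanishes r.q.e.\ on a relative neighborhood of $\partial\Omega$: on $O_{\varepsilon}\supset N$ because $(1-\tilde{\psi}_{\varepsilon})^{+}=0$ r.q.e.\ there, and on $\partial\Omega\setminus O_{\varepsilon}$ because $\tilde{u}\leq\eta$ there forces $\tilde{u}_{\eta}$ to vanish in a relative neighborhood by quasicontinuity. Hence $w_{\varepsilon}=0$ a.e.\ on an open set of $\Omega$ covering $\partial\Omega$, and after a preliminary truncation inherited from the definition of $\tilde{\mathcal{W}}^{s,q(.),p(.,.)}(\Omega)$ handling possible unboundedness of $\Omega$, the essential support of $w_{\varepsilon}$ sits in a compact subset of $\Omega$; mollifying and extracting diagonally in $\eta$, $\varepsilon$ and the smoothing parameter yields the desired approximation by functions in $\mathscr{C}^{\infty}_{0}(\Omega)$.

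The main obstacle is verifying that $w_{\varepsilon}$ has \emph{essential} compact support in $\Omega$, rather than merely a quasicontinuous representative vanishing r.q.e.\ on $\partial\Omega$. This rests on combining the relative openness of $O_{\varepsilon}$ with the r.q.e.\ vanishing of $\tilde{u}_{\eta}$ off $N$ and an upper-semicontinuity step on $\overline{\Omega}\setminus O_{\varepsilon}$, then reconciling everything with the $L^{0}$-equivalence class viewpoint underlying the definition of $\mathcal{W}^{s,q(.),p(.,.)}_{0}(\Omega)$. A secondary difficulty, specific to the variable-exponent setting, is the absence of $p$-homogeneity of $\rho^{s,\Omega}_{q(.),p(.,.)}$: the passage from modular smallness of $\psi_{\varepsilon}$ to norm smallness of the correction $\psi_{\varepsilon}u_{\eta}$ must go through the norm/modular equivalence on the unit ball together with a H\"older-type product estimate in variable exponent Lebesgue spaces.
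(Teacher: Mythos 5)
Your forward inclusion matches the paper exactly: take $\phi_{k}\in\mathscr{C}^{\infty}_{0}(\Omega)$ converging to $u$, extract a quasi-everywhere convergent subsequence via the capacity estimates on the level sets of $|\phi_k-\phi_\ell|$ (this is precisely Corollary~\ref{sous suite}), and conclude $\tilde{u}=0$ r.q.e.\ on $\partial\Omega$ since each $\phi_k$ vanishes on $\partial\Omega$. No issues there.

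The reverse inclusion is where your proposal has a real gap, and you half-see it yourself. Your construction $w_{\varepsilon}:=(1-\psi_{\varepsilon})^{+}u_{\eta}$ truncates $u$ directly, so the only information you have about $u_{\eta}$ near $\partial\Omega\setminus O_{\varepsilon}$ is that its quasicontinuous representative vanishes r.q.e.\ \emph{on} $\partial\Omega\setminus O_{\varepsilon}$. Quasicontinuity alone does not upgrade ``vanishes r.q.e.\ on a boundary set'' to ``vanishes on a relative neighborhood of that boundary set'': the set $\{\tilde{u}_{\eta}<\eta\}$ is relatively open only within $\overline{\Omega}\setminus G$ for some small-capacity $G$, which is not an open subset of $\overline{\Omega}$, and enlarging $O_{\varepsilon}$ to absorb $G$ still does not give $\tilde{u}_{\eta}=0$ (as opposed to $\tilde{u}_{\eta}$ \emph{small}) on any neighborhood. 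So the ``essential compact support of $w_{\varepsilon}$'' step, which you correctly flag as the main obstacle, is genuinely unresolved in your sketch; mollification alone cannot repair it, since mollifying a function without compact essential support does not produce compact support.

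The paper sidesteps this precisely by \emph{not} truncating $u$. It first proves a lemma (Lemma~\ref{limite nulle sur la frontiere}) stating that if $\lim_{\Omega\ni x\to z}\tilde{u}(x)=0$ for every $z\in\partial\Omega$ (genuine topological boundary limits, for $\Omega$ bounded), then $u\in\mathcal{H}^{s,q(.),p(.,.)}_{0}(\Omega)$ — its proof uses compactness of $\partial\Omega$ to get a finite cover of balls on which $\tilde{u}\leq 2^{-i}$ and then works with $(u-2^{-i})^{+}$, whose support is compactly contained in $\Omega$. The bounded case (Theorem~\ref{inclusion for bounded set}) then takes $u_{i}\in\mathcal{W}^{s,q(.),p(.,.)}(\Omega)\cap\mathscr{C}_{c}(\overline{\Omega})$ converging to $u$ with uniform convergence off $G_{i}$, picks $O_{i}$ with $\tilde{u}=0$ on $\partial\Omega\setminus O_{i}$, sets $w_{i}:=(u_{i_{0}}-2^{-i})^{+}$ — a truncation of the \emph{continuous} approximant $u_{i_{0}}$, not of $u$ — and multiplies by $1-\mathscr{E}_{O_{i}\cup G_{i}}$. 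Because $u_{i_{0}}$ is honestly continuous on $\overline{\Omega}$ and $|u_{i_{0}}|\leq 2^{-i}$ on $\partial\Omega\setminus(O_{i}\cup G_{i})$, the resulting $v_{i}$ satisfies the genuine boundary-limit hypothesis of the lemma, which closes the gap you left open. The paper then handles unbounded $u$ by a dominated-convergence argument on the truncations $\max\{\min(u,i),-i\}$, and unbounded $\Omega$ by a smooth cutoff supported in $B(0,i)$; your proposal gestures at both but leaves them as asides. In short: your strategy is the right one, but you should swap the truncation of $u$ for a truncation of a $\mathscr{C}_{c}(\overline{\Omega})$ approximant to obtain true compact support, as the paper does.
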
 
\begin{corollary}
	Let $\Omega\subset \mathbb{R}^{n}$ be open. Then the following assertions are equivalent.
	\begin{enumerate}
		\item $ C_{q(.),p(.,.)}^{s,\Omega}(\partial\Omega)=0$;
		\item $\mathcal{H}^{s,q(.),p(.,.)}_{0}(\Omega)=\tilde{\mathcal{W}}^{s,q(.),p(.,.)}(\Omega)$.
	\end{enumerate}
\end{corollary}
A function $ u:\Omega\longrightarrow \mathbb{R}$ is said to be $(s,q(.),p(.,.))$-relatively quasicontinuous ($ (s,q(.),p(.,.)) $-r.q.c., for short) if for every $ \varepsilon>0 $, there exists a relatively open set $ O_{\varepsilon}\subset\Omega $ such that $C_{q(.),p(.,.)}^{s,\Omega}(O_{\varepsilon})<\varepsilon$ and $ u $ is continuous on $ \Omega\setminus O_{\varepsilon}$. Now, we gives another definition for fractional variable exponents Sobolev zero trace spaces $ \tilde{\mathcal{W}}^{s,q(.),p(.,.)}_{0}(\Omega) $ in view of potential theory.   
\begin{definition}
	Let $s\in(0,1), q\in \mathcal{P}(\Omega)$ and 
	$p\in\mathcal{P}(\Omega\times\Omega)$. We denote
	$ u \in \tilde{\mathcal{W}}^{s,q(.),p(.,.)}_{0}(\Omega)$ and say that $ u $ belongs to the fractional variable exponents Sobolev zero trace spaces $ \tilde{\mathcal{W}}^{s,q(.),p(.,.)}_{0}(\Omega) $ if there exists a $(s,q(.),p(.,.))$-relatively quasicontinuous function $ \tilde{u}\in\tilde{\mathcal{W}}^{s,q(.),p(.,.)}(\Omega)$ such
	that $ \tilde{u}= u$ a.e. in $\Omega$  and $ \tilde{u}=0$ $(s,q(.),p(.,.))$-r.q.e. in $ \Omega^{c} $.
\end{definition}
It is important to see the connections between all of the previous spaces. In fact under the density assumption they coincides. More precisely, we have the following theorem:
 \begin{theorem}
	Let $s\in(0,1), q\in \mathcal{P}(\Omega)$ and 
	$p\in\mathcal{P}(\Omega\times\Omega)$. Assume that $ \mathcal{W}^{s,q(.),p(.,.)}(\Omega)\cap \mathscr{C}^{\infty}(\overline{\Omega})$ is dense in 
	$ \mathcal{W}^{s,q(.),p(.,.)}(\Omega)$. 
	Then
	\begin{equation*}
	\mathcal{{H}}^{s,q(.),p(.,.)}_{0}(\Omega)=\mathcal{\tilde{{W}}}^{s,q(.),p(.,.)}_{0}(\Omega)= \mathcal{W}^{s,q(.),p(.,.)}_{0}(\Omega)=\mathcal{\tilde{{W}}}^{s,q(.),p(.,.)}(\Omega).
	\end{equation*}  
\end{theorem}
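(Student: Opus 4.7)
The plan is to use the preceding theorem, which under the same density hypothesis already asserts $\mathcal{H}^{s,q(.),p(.,.)}_{0}(\Omega)=\mathcal{W}^{s,q(.),p(.,.)}_{0}(\Omega)=\tilde{\mathcal{W}}^{s,q(.),p(.,.)}(\Omega)$, and simply to insert $\tilde{\mathcal{W}}^{s,q(.),p(.,.)}_{0}(\Omega)$ into this chain. The whole statement therefore reduces to the single remaining equality $\tilde{\mathcal{W}}^{s,q(.),p(.,.)}_{0}(\Omega)=\tilde{\mathcal{W}}^{s,q(.),p(.,.)}(\Omega)$.

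First I would feed the identity $\mathcal{H}^{s,q(.),p(.,.)}_{0}(\Omega)=\tilde{\mathcal{W}}^{s,q(.),p(.,.)}(\Omega)$, supplied by the preceding theorem, into the Corollary to deduce that the density hypothesis forces $C^{s,\Omega}_{q(.),p(.,.)}(\partial\Omega)=0$. Next, the inclusion $\tilde{\mathcal{W}}^{s,q(.),p(.,.)}_{0}(\Omega)\subseteq \tilde{\mathcal{W}}^{s,q(.),p(.,.)}(\Omega)$ is immediate from the definition of the zero-trace space. For the reverse inclusion, I would take an arbitrary $u\in \tilde{\mathcal{W}}^{s,q(.),p(.,.)}(\Omega)$ and invoke the quasicontinuity theorem (announced in the abstract and established earlier in the paper) to produce a relatively quasicontinuous representative $\tilde{u}$ with $\tilde{u}=u$ a.e. in $\Omega$. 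The clause ``$\tilde{u}=0$ $(s,q(.),p(.,.))$-r.q.e. in $\Omega^{c}$'' appearing in the definition of $\tilde{\mathcal{W}}^{s,q(.),p(.,.)}_{0}(\Omega)$ is measured by the relative capacity $C^{s,\Omega}_{q(.),p(.,.)}$, which only detects subsets of $\overline{\Omega}$; it therefore reduces to a condition on $\Omega^{c}\cap\overline{\Omega}=\partial\Omega$. Because $C^{s,\Omega}_{q(.),p(.,.)}(\partial\Omega)=0$, this condition is vacuously satisfied on taking the exceptional set to be $\partial\Omega$ itself, so $u\in \tilde{\mathcal{W}}^{s,q(.),p(.,.)}_{0}(\Omega)$ and the chain of four equalities closes.

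The main obstacle is really interpretational rather than analytic: one must unambiguously read ``r.q.e. in $\Omega^{c}$'' through the relative capacity $C^{s,\Omega}_{q(.),p(.,.)}$ defined on $\overline{\Omega}$, so that only the boundary trace is being constrained. Once this is clear, the rest is a short bookkeeping argument combining the preceding theorem, its corollary, and the quasicontinuity theorem. A shorter alternative route is to notice that $\tilde{\mathcal{W}}^{s,q(.),p(.,.)}_{0}(\Omega)$ coincides, under that same interpretation, with the right-hand side of the characterization $\{u\in \tilde{\mathcal{W}}^{s,q(.),p(.,.)}(\Omega):\tilde{u}=0\ \textnormal{r.q.e. on}\ \partial\Omega\}$, so that $\tilde{\mathcal{W}}^{s,q(.),p(.,.)}_{0}(\Omega)=\mathcal{H}^{s,q(.),p(.,.)}_{0}(\Omega)$ holds without any density assumption, the density hypothesis being invoked only to chain in $\mathcal{W}^{s,q(.),p(.,.)}_{0}(\Omega)$ and $\tilde{\mathcal{W}}^{s,q(.),p(.,.)}(\Omega)$ via the preceding theorem.
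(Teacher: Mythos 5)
Your proof is correct and closes the theorem by a strictly different and considerably shorter route than the paper's. The paper opens its own proof by invoking the same two earlier results (the corollary giving $\mathcal{H}^{s,q(.),p(.,.)}_{0}(\Omega)\subset\tilde{\mathcal{W}}^{s,q(.),p(.,.)}_{0}(\Omega)\subset\tilde{\mathcal{W}}^{s,q(.),p(.,.)}(\Omega)$ and the density theorem $\mathcal{H}^{s,q(.),p(.,.)}_{0}(\Omega)=\mathcal{W}^{s,q(.),p(.,.)}_{0}(\Omega)=\tilde{\mathcal{W}}^{s,q(.),p(.,.)}(\Omega)$) to reduce to the inclusion $\tilde{\mathcal{W}}^{s,q(.),p(.,.)}_{0}(\Omega)\subset\mathcal{H}^{s,q(.),p(.,.)}_{0}(\Omega)$, and then proves that inclusion by a long hands-on construction: truncate the quasicontinuous representative $\tilde{u}$ at height $\delta$, kill a small-capacity open set $G$ with a cutoff $1-\varphi_{\varepsilon}$, check that $(1-\varphi_{\varepsilon})\tilde{u}_{\delta}$ has compact support in $\Omega$, and finally approximate by $\mathscr{C}^{\infty}_{0}(\Omega)$ functions. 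You observe, correctly, that the two cited ingredients already yield the wanted inclusion outright, since $\tilde{\mathcal{W}}^{s,q(.),p(.,.)}_{0}(\Omega)\subset\tilde{\mathcal{W}}^{s,q(.),p(.,.)}(\Omega)=\mathcal{H}^{s,q(.),p(.,.)}_{0}(\Omega)$, so no fresh analytic work is needed; the paper appears not to notice this. Your ``shorter alternative'' is also sound and buys a little more: because $C_{q(.),p(.,.)}^{s,\Omega}$ only sees subsets of $\overline{\Omega}$, the clause ``$\tilde{u}=0$ r.q.e.\ in $\Omega^{c}$'' in the definition of $\tilde{\mathcal{W}}^{s,q(.),p(.,.)}_{0}(\Omega)$ constrains only $\partial\Omega$, so $\tilde{\mathcal{W}}^{s,q(.),p(.,.)}_{0}(\Omega)$ coincides with the set characterizing $\mathcal{H}^{s,q(.),p(.,.)}_{0}(\Omega)$ via relative capacity, and the equality $\tilde{\mathcal{W}}^{s,q(.),p(.,.)}_{0}(\Omega)=\mathcal{H}^{s,q(.),p(.,.)}_{0}(\Omega)$ holds with no density assumption at all. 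What the paper's longer argument would buy, if it stood on its own, is an explicit smoothing procedure; but for the theorem as stated it is logically superfluous, and your bookkeeping argument is the one the paper's own opening reduction already points to.
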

It is natural to ask for which a subsets $ N $ of $ \Omega $ are removable for the Sobolev space $\mathcal{\tilde{{W}}}^{s,q(.),p(.,.)}_{0}(\Omega)$.
\begin{definition}
Let $ \Omega\subset \mathbb{R}^{n}$ be an open set. We call a subset $ N $ of $ \Omega $ is removable for $\mathcal{\tilde{{W}}}^{s,q(.),p(.,.)}_{0}(\Omega)$ if 
\begin{equation*}
\mathcal{\tilde{{W}}}^{s,q(.),p(.,.)}_{0}(\Omega)=\mathcal{\tilde{{W}}}^{s,q(.),p(.,.)}_{0}(\Omega\setminus N).	
\end{equation*}  
\end{definition}
The next theorem gives a relative capacity criterium for removable subsets for $\tilde{\mathcal{W}}^{s,q(.),p(.,.)}_{0}(\Omega)$.
\begin{theorem}
	Let $ N $ be a subset of $\Omega$. Then the following assertions are equivalent.
	\begin{enumerate}
		\item $ C_{q(.),p(.,.)}^{s,\Omega}(N\cap\Omega)=0$;
		\item $\tilde{\mathcal{W}}^{s,q(.),p(.,.)}_{0}(\Omega)=\tilde{\mathcal{W}}^{s,q(.),p(.,.)}_{0}(\Omega\setminus N)$.
	\end{enumerate}
\end{theorem}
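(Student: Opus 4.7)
\textbf{The plan is} to split the equivalence into the two implications and to exploit the quasicontinuity characterization of the zero--trace space developed earlier. Before doing either direction, observe that the inclusion $\tilde{\mathcal{W}}^{s,q(.),p(.,.)}_{0}(\Omega\setminus N)\subseteq \tilde{\mathcal{W}}^{s,q(.),p(.,.)}_{0}(\Omega)$ is automatic: writing $(\Omega\setminus N)^{c}=\Omega^{c}\cup N$, a r.q.c.\ representative vanishing $(s,q(.),p(.,.))$-r.q.e.\ on $\Omega^{c}\cup N$ vanishes in particular r.q.e.\ on $\Omega^{c}$.

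For $(1)\Rightarrow(2)$, I would start with $u\in \tilde{\mathcal{W}}^{s,q(.),p(.,.)}_{0}(\Omega)$ and its r.q.c.\ representative $\tilde{u}$. By definition $\tilde{u}=0$ r.q.e.\ on $\Omega^{c}$, so only the piece $N\cap\Omega$ of $(\Omega\setminus N)^{c}=\Omega^{c}\cup(N\cap\Omega)$ remains to be dealt with. Since any pointwise property trivially holds $(s,q(.),p(.,.))$-r.q.e.\ on a set of zero relative capacity, the hypothesis $C^{s,\Omega}_{q(.),p(.,.)}(N\cap\Omega)=0$ gives $\tilde{u}=0$ r.q.e.\ there as well. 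Hence $\tilde{u}$ meets the defining condition for membership in $\tilde{\mathcal{W}}^{s,q(.),p(.,.)}_{0}(\Omega\setminus N)$, establishing the reverse inclusion.

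For $(2)\Rightarrow(1)$, I would argue by contraposition. Assume $C^{s,\Omega}_{q(.),p(.,.)}(N\cap\Omega)>0$, exhaust $\Omega$ by open balls $B_{j}\Subset\Omega$, and use countable subadditivity of the relative capacity to pick an index $j$ with $C^{s,\Omega}_{q(.),p(.,.)}(B_{j}\cap N)>0$. Choose a cut--off $\phi\in\mathscr{C}^{\infty}_{0}(\Omega)$ with $\phi\equiv1$ on $B_{j}$. Since $\phi\in \mathcal{W}^{s,q(.),p(.,.)}(\Omega)\cap\mathscr{C}_{c}(\overline{\Omega})$, it lies in $\tilde{\mathcal{W}}^{s,q(.),p(.,.)}(\Omega)$; being continuous on $\mathbb{R}^{n}$ (extended by zero) it is its own r.q.c.\ representative and vanishes pointwise on $\Omega^{c}$, so $\phi\in\tilde{\mathcal{W}}^{s,q(.),p(.,.)}_{0}(\Omega)$. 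If the equality in $(2)$ held, $\phi$ would also lie in $\tilde{\mathcal{W}}^{s,q(.),p(.,.)}_{0}(\Omega\setminus N)$, yielding a r.q.c.\ representative $\tilde{\phi}$ with $\tilde{\phi}=\phi$ a.e.\ and $\tilde{\phi}=0$ r.q.e.\ on $N$. Invoking the uniqueness (up to r.q.e.\ equality) of r.q.c.\ representatives of the same a.e.\ class, one gets $\tilde{\phi}=\phi$ r.q.e., hence $\phi\equiv0$ r.q.e.\ on $B_{j}\cap N$, contradicting $\phi\equiv1$ on $B_{j}\cap N$ together with $C^{s,\Omega}_{q(.),p(.,.)}(B_{j}\cap N)>0$.

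\textbf{The main obstacle} is the r.q.c.\ uniqueness step invoked in $(2)\Rightarrow(1)$: two $(s,q(.),p(.,.))$-r.q.c.\ functions that agree Lebesgue a.e.\ must agree $(s,q(.),p(.,.))$-r.q.e. This is the standard Egoroff/Choquet-type fact underpinning the earlier characterization theorem for $\mathcal{H}^{s,q(.),p(.,.)}_{0}(\Omega)$ and is the only nontrivial ingredient; countable subadditivity of the relative capacity and the fact that $\mathscr{C}^{\infty}_{0}(\Omega)$ functions extended by zero are r.q.c.\ zero--trace elements are routine consequences of the definitions already in place.
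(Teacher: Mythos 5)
Your forward implication $(1)\Rightarrow(2)$ coincides with the paper's argument: note the automatic inclusion, decompose $(\Omega\setminus N)^{c}=\Omega^{c}\cup(N\cap\Omega)$, and use that the null-capacity hypothesis makes the r.q.e.\ vanishing on $N\cap\Omega$ vacuous. For the converse $(2)\Rightarrow(1)$ you and the paper go by genuinely different routes, though they pivot on the same ingredient, the Kilpel\"{a}inen-type uniqueness of r.q.c.\ representatives (Theorem \ref{egalite quasipartout}). The paper argues directly: it exhausts $\Omega$ by the sets $\Omega_{i}=B(x_{0},i)\cap\{\,\mathrm{dist}(x,\Omega^{c})>1/i\,\}$, builds the Lipschitz test functions $u_{i}v_{i}$ with $u_{i}=\max\{1-\mathrm{dist}(x,N\cap\Omega_{i}),0\}$ and $v_{i}=\mathrm{dist}(x,\Omega_{i}^{c})$, invokes Proposition \ref{produit dans Sobolev} to place $u_{i}v_{i}$ in $\tilde{\mathcal{W}}^{s,q(.),p(.,.)}_{0}(\Omega)$, and reads off $C^{s,\Omega}_{q(.),p(.,.)}(N\cap\Omega_{i})=0$ for each $i$, finishing by countable subadditivity. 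You instead argue by contraposition, select a single ball $B_{j}\Subset\Omega$ with $C^{s,\Omega}_{q(.),p(.,.)}(B_{j}\cap N)>0$, and use a smooth cutoff $\phi\in\mathscr{C}^{\infty}_{0}(\Omega)$ equal to $1$ there. Your version sidesteps Proposition \ref{produit dans Sobolev} entirely, since a compactly supported smooth function is manifestly in $\tilde{\mathcal{W}}^{s,q(.),p(.,.)}_{0}(\Omega)$ and is its own continuous representative, which is a pleasant economy; the paper's version avoids arguing by contradiction and produces the conclusion constructively. One point you should make explicit, mirroring a step the paper also treats lightly, is that Theorem \ref{egalite quasipartout} requires a.e.\ agreement of $\tilde{\phi}$ and $\phi$ on all of $\Omega$, while the definition only gives $\tilde{\phi}=\phi$ a.e.\ on $\Omega\setminus N$; you need the (easy but worth stating) observation that assertion $(2)$ forces $|N\cap\Omega|=0$ before the uniqueness theorem applies.
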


We now give the structure of the paper. More detailed descriptions appear at the beginnings of the sections.

Section $ 2 $ gives sufficient conditions for the existence of a $(s,q(.),p(.,.))$-relatively quasicontinuous representative for functions in $\tilde{\mathcal{W}}^{s,q(.),p(.,.)}(\Omega)$.

Section $ 3 $ modulo the technical results of Section $ 2 $ develop a theory of fractional variable exponents Sobolev trace spaces and gives a necessary and sufficient condition in terms of $(s,q(.),p(.,.))$-relative capacity of removable subsets  for $\tilde{\mathcal{W}}^{s,q(.),p(.,.)}_{0}(\Omega)$.
\section{\textbf{ $(s,q(.),p(.,.))$-quasicontinuous representative  of equivalence class of Sobolev functions}}
Sobolev functions are defined only up to Lebesgue measure zero and thus it is not always clear how to use their point-wise properties. But one can also think of some representative of this equivalence class, perhaps defined at all points outside a set of measure zero.

In this section, we show that the equivalent class of Sobolev functions in $\tilde{\mathcal{W}}^{s,q(.),p(.,.)}(\Omega)$ are  
$ (s,q(.),p(.,.))$-relatively quasicontinuous, it turns out that Sobolev functions are defined up to a set of $ (s,q(.),p(.,.))$-relative capacity zero. This is a concept with deep roots when we want to study fractional variable exponents spaces with zero boundary values.

We begin by recalling some proprieties of the fractional Sobolev $(s,q(.),p(.,.))$-capacity appearing in the existing literature, we refer to \cite{BA3,Warma1}.\\
First, we recall the definition of the fractional relative $(s,q(.),p(.,.))$-capacity.
	Let $O\subset \overline{\Omega}$ be a relatively open set, that is, open with respect to the relative topology of $\overline{\Omega}$. Let $s\in(0,1), q\in \mathcal{P}(\Omega)$ and $p\in\mathcal{P}(\Omega\times\Omega)$. We denote
	\begin{eqnarray*}
		\mathcal{R}^{s, \overline{\Omega}}_{q(.),p(.,.)}(O):=\left\lbrace u\in \tilde{{\mathcal{W}}}^{s,q(.),p(.,.)}(\Omega): u\geq 1 \ \textnormal{ a.e. on}\ O \right\rbrace.    	
	\end{eqnarray*} 
	We define the fractional relative $(s,q(.),p(.,.))$-capacity of $O$, with respect to $\Omega$, by
	\begin{eqnarray*}
		C_{q(.),p(.,.)}^{s,\Omega}(O):=\inf_{u\in\mathcal{R}^{s, \overline{\Omega}}_{q(.),p(.,.)}(O)}{\rho}_{q(.),p(.,.)}^{s,\Omega}(u).	   	
	\end{eqnarray*}
	For any set $ E\subset \overline{\Omega}$,
	\begin{eqnarray*} 
		C_{q(.),p(.,.)}^{s,\Omega}(E):=\inf\left\lbrace C_{q(.),p(.,.)}^{s,\Omega}(O): O \ \textnormal{relatively open in}\ \overline{\Omega} \ \textnormal{containing}\ E  \right\rbrace. 
	\end{eqnarray*}
Recall from \cite{BA3} that the set function $E\mapsto 	C_{q(.),p(.,.)}^{s,\Omega}$ has the following properties:
 \begin{itemize}
 	\item $(C_{1})$ $	C_{q(.),p(.,.)}^{s,\Omega}(\varnothing)=0$;
 	\item $(C_{2})$ If $ E_{1}\subset E_{2} \subset \Omega_{2} \subset \Omega_{1}$, then
 	\begin{equation*}
 		C_{q(.),p(.,.)}^{s,\Omega_{1}}(E_{1})\leqslant 	C_{q(.),p(.,.)}^{s,\Omega_{2}}(E_{2});
 	\end{equation*}
 	\item $(C_{3})$ If $K_{1}\supset K_{2}\supset K_{3}\dots$ are compact subsets of $\Omega$, then
 	\begin{equation*}
 C_{q(.),p(.,.)}^{s,\Omega}\left( \bigcap_{i=1}^{\infty} K_{i}\right) =\lim _{i\rightarrow\infty}C_{q(.),p(.,.)}^{s,\Omega}(K_{i}); 
 	\end{equation*}
 	\item $(C_{4})$ If $E_{1}\subset E_{2}\dots$ are subsets of $\Omega$, then
 	\begin{equation*}
 	C_{q(.),p(.,.)}^{s,\Omega}\left(\bigcup_{i=1}^{\infty} E_{i}\right) =\lim _{i\rightarrow\infty}C_{q(.),p(.,.)}^{s,\Omega}(E_{i}); 
 	\end{equation*}
 	\item $(C_{5})$ For $E_{i}\subset \Omega$, $ i\in \mathbb{N}$, we have
 	\begin{equation*}
 	C_{q(.),p(.,.)}^{s,\Omega}\left( \bigcup_{i=1}^{\infty} E_{i}\right) \leqslant \sum ^{\infty}_{i=1} C_{q(.),p(.,.)}^{s,\Omega}(E_{i}).
 	\end{equation*}
 \end{itemize}
 This means that the fractional relative $(s,q(.),p(.,.))$-capacity $ C_{q(.),p(.,.)}^{s,\Omega}$ is an outer measure and a Choquet capacity. 
 
 Next, we shows that the fractional relative $(s,q(.),p(.,.))$-capacity is strongly subadditive. Let us beginning by the following useful lemma.
 \begin{lemma} \label{inegualite modulaire}
 Let $s\in(0,1), q\in \mathcal{P}(\Omega)$ and $p\in\mathcal{P}(\Omega\times\Omega)$. Let $u_{1}, u_{2}\in\mathcal{W}^{s,q(.),p(.,.)}(\Omega)$ be nonnegative. We set $ u:=\max\left\lbrace u_{1},u_{2} \right\rbrace$ and $ v:=\min\left\lbrace u_{1},u_{2} \right\rbrace$. Then $u, v\in\mathcal{W}^{s,q(.),p(.,.)}(\Omega)$ and 
 \begin{equation*}
 \rho_{q(.),p(.,.)}^{s,\Omega}(u)+ \rho_{q(.),p(.,.)}^{s,\Omega}(v)\leqslant  \rho_{q(.),p(.,.)}^{s,\Omega}(u_{1})+\rho_{q(.),p(.,.)}^{s,\Omega}(u_{2}).	
 \end{equation*} 
 \end{lemma}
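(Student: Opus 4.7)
The plan is to reduce the modular inequality to pointwise bounds and then integrate. Split the modular as
\[
\rho^{s,\Omega}_{q(.),p(.,.)}(w)=\int_{\Omega}|w(x)|^{q(x)}\,dx+\iint_{\Omega\times\Omega}\frac{|w(x)-w(y)|^{p(x,y)}}{|x-y|^{n+sp(x,y)}}\,dx\,dy,
\]
so it suffices to control the two summands separately. Since $u_1,u_2\geq 0$, at every $x\in\Omega$ the multiset $\{u(x),v(x)\}$ coincides with $\{u_1(x),u_2(x)\}$, hence
\[
|u(x)|^{q(x)}+|v(x)|^{q(x)}=|u_1(x)|^{q(x)}+|u_2(x)|^{q(x)},
\]
and integration yields equality for the $L^{q(.)}$ parts.

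The real work is the pointwise estimate, valid for a.e.\ $(x,y)\in\Omega\times\Omega$ with the corresponding exponent $p=p(x,y)\geq 1$:
\[
|u(x)-u(y)|^{p}+|v(x)-v(y)|^{p}\ \leq\ |u_1(x)-u_1(y)|^{p}+|u_2(x)-u_2(y)|^{p}.
\]
Relabelling $u_1\leftrightarrow u_2$ if necessary, I can assume $u_1(x)\geq u_2(x)$. In the aligned subcase $u_1(y)\geq u_2(y)$, the maxima and minima coincide with $u_1$ and $u_2$ respectively at both points, and the inequality becomes an equality. In the crossed subcase $u_1(y)<u_2(y)$, set
\[
A=u_1(x)-u_1(y),\quad B=u_2(x)-u_2(y),\quad C=u_1(x)-u_2(y),\quad D=u_2(x)-u_1(y).
\]
A direct computation gives $A+B=C+D$, $A\geq\max(C,D)$ and $B\leq\min(C,D)$, so the pair $(A,B)$ majorizes $(C,D)$ in the sense of Hardy--Littlewood--P\'olya. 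Since $t\mapsto|t|^{p}$ is convex for $p\geq 1$, the majorization inequality yields $|C|^{p}+|D|^{p}\leq|A|^{p}+|B|^{p}$, which is exactly the needed pointwise bound.

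Dividing this estimate by $|x-y|^{n+sp(x,y)}$, integrating over $\Omega\times\Omega$, and combining with the $L^{q(.)}$ equality yields the stated modular inequality; finiteness of $\rho_{q(.),p(.,.)}^{s,\Omega}(u_1)+\rho_{q(.),p(.,.)}^{s,\Omega}(u_2)$ then forces $u,v\in\mathcal{W}^{s,q(.),p(.,.)}(\Omega)$. The main obstacle is the crossed-case pointwise inequality; once the four quantities $A,B,C,D$ are introduced and the elementary relations $A+B=C+D$, $A\geq\max(C,D)$, $B\leq\min(C,D)$ are verified by bookkeeping, the conclusion is an instance of a standard convexity/majorization principle, which is robust to the fact that $p(x,y)$ varies with $(x,y)$.
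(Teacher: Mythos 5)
Your proof is correct, and it follows essentially the same decomposition as the paper: both arguments observe that the $L^{q(.)}$ parts agree exactly, and both reduce the Gagliardo part to a pointwise inequality at each pair $(x,y)$ distinguishing the aligned case (where equality holds termwise) from the crossed case. The difference is in how the crossed-case pointwise bound is established. The paper partitions $\Omega$ into $\Omega_{1}=\{u_{1}\leq u_{2}\}$ and $\Omega_{2}=\{u_{1}>u_{2}\}$, writes the double integral over the four quadrants $\Omega_{i}\times\Omega_{j}$, and on the mixed quadrants cites an external result (a lemma of Velez-Santiago and Warma applied to $F_{p(x,y)}(\zeta,\beta)=|\zeta-\beta|^{p(x,y)}$) to deliver the pointwise estimate. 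You instead prove that same estimate from first principles: introducing $A,B,C,D$, checking $A+B=C+D$, $A\geq\max(C,D)$, $B\leq\min(C,D)$, and invoking Hardy--Littlewood--P\'olya majorization with the convexity of $t\mapsto|t|^{p}$ for $p\geq 1$. The pointwise per-$(x,y)$ relabelling you perform is legitimate because the desired inequality is symmetric in $u_1\leftrightarrow u_2$ and the conclusion does not reference the labels. You also avoid the paper's citation for membership $u,v\in\mathcal{W}^{s,q(.),p(.,.)}(\Omega)$ by deducing it directly from the finiteness of the right-hand side, using the equivalence stated in the introduction. Net effect: your argument is self-contained and a bit more elementary, at the cost of having to carry out (modest) majorization bookkeeping; the paper's version is shorter by deferring the key inequality to a reference.
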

 \begin{proof}
 Let $ s,q,p,u_{1},u_{2},u$ and $ v $ be as in the statement of the lemma. By \cite[Proposition 2 ]{BA3}, we have that $u, v\in\mathcal{W}^{s,q(.),p(.,.)}(\Omega)$. Let 
 \begin{equation*}
  \Omega_{1}:=\left\lbrace x\in\Omega; u_{1}(x)\leqslant u_{2}(x) \right\rbrace \ \text{and} \   \Omega_{2}:=\left\lbrace x\in\Omega; u_{1}(x)> u_{2}(x) \right\rbrace.
 \end{equation*}
 Then
 \begin{equation*}
\int_{\Omega}\left| u(x)\right|^{q(x)}\ dx=\int_{\Omega_{1}}\left| u_{2}(x)\right|^{q(x)}\ dx+\int_{\Omega_{2}}\left| u_{1}(x)\right|^{q(x)}\ dx  	
 \end{equation*}
and
 \begin{equation*}
	\int_{\Omega}\left| v(x)\right|^{q(x)}\ dx=\int_{\Omega_{1}}\left| u_{1}(x)\right|^{q(x)}\ dx+\int_{\Omega_{2}}\left| u_{2}(x)\right|^{q(x)}\ dx  	
\end{equation*}
Hence
 \begin{equation*}
	\int_{\Omega}\left| u(x)\right|^{q(x)}\ dx+\int_{\Omega}\left| v(x)\right|^{q(x)}\ dx=\int_{\Omega}\left| u_{1}(x)\right|^{q(x)}\ dx+\int_{\Omega}\left| u_{2}(x)\right|^{q(x)}\ dx.  	
\end{equation*}
On the other hand,
\begin{eqnarray*}
\int_{\Omega}\int_{\Omega} \frac{\left|u(x)-u(y)\right|^{p(x,y)}}{\left|x-y\right|^{n+ sp(x,y)}}\ dx \  dy=\int_{\Omega_{1}}\int_{\Omega_{1}}\frac{\left|u_{2}(x)-u_{2}(y)\right|^{p(x,y)}}{\left|x-y\right|^{n+ sp(x,y)}}\ dx \  dy\\
+
\int_{\Omega_{1}}\int_{\Omega_{2}}\frac{\left|u_{1}(x)-u_{2}(y)\right|^{p(x,y)}}{\left|x-y\right|^{n+ sp(x,y)}}\ dx \  dy + \int_{\Omega_{2}}\int_{\Omega_{1}}\frac{\left|u_{2}(x)-u_{1}(y)\right|^{p(x,y)}}{\left|x-y\right|^{n+ sp(x,y)}}\ dx \  dy \\ 
+ \int_{\Omega_{2}}\int_{\Omega_{2}}\frac{\left|u_{1}(x)-u_{1}(y)\right|^{p(x,y)}}{\left|x-y\right|^{n+ sp(x,y)}}\ dx \  dy  
\end{eqnarray*}
and
\begin{eqnarray*}
	\int_{\Omega}\int_{\Omega} \frac{\left|v(x)-v(y)\right|^{p(x,y)}}{\left|x-y\right|^{n+ sp(x,y)}}\ dx \  dy=\int_{\Omega_{1}}\int_{\Omega_{1}}\frac{\left|u_{1}(x)-u_{1}(y)\right|^{p(x,y)}}{\left|x-y\right|^{n+ sp(x,y)}}\ dx \  dy\\
	+
	\int_{\Omega_{1}}\int_{\Omega_{2}}\frac{\left|u_{2}(x)-u_{1}(y)\right|^{p(x,y)}}{\left|x-y\right|^{n+ sp(x,y)}}\ dx \  dy + \int_{\Omega_{2}}\int_{\Omega_{1}}\frac{\left|u_{1}(x)-u_{2}(y)\right|^{p(x,y)}}{\left|x-y\right|^{n+ sp(x,y)}}\ dx \  dy \\ 
	+ \int_{\Omega_{2}}\int_{\Omega_{2}}\frac{\left|u_{2}(x)-u_{2}(y)\right|^{p(x,y)}}{\left|x-y\right|^{n+ sp(x,y)}}\ dx \  dy.  
\end{eqnarray*}
Hence 
\begin{eqnarray*}
\int_{\Omega}\int_{\Omega} \frac{\left|u(x)-u(y)\right|^{p(x,y)}}{\left|x-y\right|^{n+ sp(x,y)}}\ dx \  dy	+\int_{\Omega}\int_{\Omega} \frac{\left|v(x)-v(y)\right|^{p(x,y)}}{\left|x-y\right|^{n+ sp(x,y)}}\ dx \  dy=\\
 \int_{\Omega_{2}}\int_{\Omega_{2}}\frac{\left|u_{1}(x)-u_{1}(y)\right|^{p(x,y)}}{\left|x-y\right|^{n+ sp(x,y)}}\ dx \  dy +\int_{\Omega_{1}}\int_{\Omega_{1}}\frac{\left|u_{1}(x)-u_{1}(y)\right|^{p(x,y)}}{\left|x-y\right|^{n+ sp(x,y)}}\ dx \  dy\\
 +\int_{\Omega_{2}}\int_{\Omega_{1}}\frac{\left|u_{2}(x)-u_{1}(y)\right|^{p(x,y)}+\left|u_{1}(x)-u_{2}(y)\right|^{p(x,y)}}{\left|x-y\right|^{n+ sp(x,y)}}\ dx \  dy\\
 +\int_{\Omega_{1}}\int_{\Omega_{1}}\frac{\left|u_{2}(x)-u_{2}(y)\right|^{p(x,y)}}{\left|x-y\right|^{n+ sp(x,y)}}\ dx \  dy+\int_{\Omega_{2}}\int_{\Omega_{2}}\frac{\left|u_{2}(x)-u_{2}(y)\right|^{p(x,y)}}{\left|x-y\right|^{n+ sp(x,y)}}\ dx \  dy\\
 +\int_{\Omega_{1}}\int_{\Omega_{2}}\frac{\left|u_{1}(x)-u_{2}(y)\right|^{p(x,y)}+\left|u_{2}(x)-u_{1}(y)\right|^{p(x,y)}}{\left|x-y\right|^{n+ sp(x,y)}}\ dx \  dy  
\end{eqnarray*}
Now, by using \cite[ Lemma 3.3]{Warma} with the mapping $ F:\mathbb{R}^{2}\longrightarrow \left[0,+\infty \right)$ defined by $F_{p(x,y)}(\zeta,\beta)=\left|\zeta-\beta\right|^{p(x,y)}$, we get that
\begin{equation*}
\left|u_{2}(x)-u_{1}(y)\right|^{p(x,y)}+\left|u_{1}(x)-u_{2}(y)\right|^{p(x,y)}\leqslant \left|u_{1}(x)-u_{1}(y)\right|^{p(x,y)}+\left|u_{2}(x)-u_{2}(y)\right|^{p(x,y)}
\end{equation*}
on $\Omega_{1}\times \Omega_{2}:=\left\lbrace (x,y)\in\Omega\times\Omega,x\in\Omega_{1},y\in\Omega_{2} \right\rbrace$ and 
\begin{equation*}
\left|u_{1}(x)-u_{2}(y)\right|^{p(x,y)}+\left|u_{2}(x)-u_{1}(y)\right|^{p(x,y)}\leqslant \left|u_{1}(x)-u_{1}(y)\right|^{p(x,y)}+\left|u_{2}(x)-u_{2}(y)\right|^{p(x,y)}
\end{equation*}
on $\Omega_{2}\times \Omega_{1}:=\left\lbrace (x,y)\in\Omega\times\Omega,x\in\Omega_{2},y\in\Omega_{1} \right\rbrace$. Hence
\begin{eqnarray*}
	\int_{\Omega}\int_{\Omega} \frac{\left|u(x)-u(y)\right|^{p(x,y)}}{\left|x-y\right|^{n+ sp(x,y)}}\ dx \  dy	+\int_{\Omega}\int_{\Omega} \frac{\left|v(x)-v(y)\right|^{p(x,y)}}{\left|x-y\right|^{n+ sp(x,y)}}\ dx \  dy\\
	 \leqslant 	\int_{\Omega}\int_{\Omega} \frac{\left|u_{1}(x)-u_{1}(y)\right|^{p(x,y)}}{\left|x-y\right|^{n+ sp(x,y)}}\ dx \  dy	+\int_{\Omega}\int_{\Omega} \frac{\left|u_{2}(x)-u_{2}(y)\right|^{p(x,y)}}{\left|x-y\right|^{n+ sp(x,y)}}\ dx \  dy
\end{eqnarray*}
and 
\begin{equation*}
\int_{\Omega}\left| u(x)\right|^{q(x)}\ dx+\int_{\Omega}\left| v(x)\right|^{q(x)}\ dx=\int_{\Omega}\left| u_{1}(x)\right|^{q(x)}\ dx+\int_{\Omega}\left| u_{2}(x)\right|^{q(x)}\ dx.  	
\end{equation*}
Consequently, we get that
\begin{equation*}
\rho_{q(.),p(.,.)}^{s,\Omega}(u)+ \rho_{q(.),p(.,.)}^{s,\Omega}(v)\leqslant  \rho_{q(.),p(.,.)}^{s,\Omega}(u_{1})+\rho_{q(.),p(.,.)}^{s,\Omega}(u_{2}).	
\end{equation*} 
\end{proof}  
We notice that Lemma \ref{inegualite modulaire} remain true if one replaces $\mathcal{{W}}^{s,q(.),p(.,.)}(\Omega)$ with the space $ \mathcal{\tilde{{W}}}^{s,q(.),p(.,.)}(\Omega)$. 
\begin{proposition}\label{strong subbaditivity de capacity}
 The fractional relative $(s,q(.),p(.,.))$-capacity is strongly subadditive, that is, for all $ A, B \subset\overline{\Omega}$,
 \begin{equation*}
 C_{q(.),p(.,.)}^{s,\Omega}(A\cup B)+C_{q(.),p(.,.)}^{s,\Omega}(A\cap B)\leqslant C_{q(.),p(.,.)}^{s,\Omega}(A)+C_{q(.),p(.,.)}^{s,\Omega}(B). 	
 \end{equation*}
\end{proposition}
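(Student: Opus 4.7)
The plan is to reduce first to the case of relatively open sets and then combine approximately optimal admissible functions via the lattice operations, applying Lemma \ref{inegualite modulaire} as the workhorse estimate.

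First I would assume that $A$ and $B$ are themselves relatively open in $\overline{\Omega}$; if either capacity on the right is infinite there is nothing to prove, so assume both finite. Fix $\varepsilon>0$ and, by the definition of $C^{s,\Omega}_{q(.),p(.,.)}$, choose $u_1\in\mathcal{R}^{s,\overline{\Omega}}_{q(.),p(.,.)}(A)$ and $u_2\in\mathcal{R}^{s,\overline{\Omega}}_{q(.),p(.,.)}(B)$ with
\begin{equation*}
\rho^{s,\Omega}_{q(.),p(.,.)}(u_i)\leqslant C^{s,\Omega}_{q(.),p(.,.)}(\cdot)+\varepsilon,\qquad i=1,2.
\end{equation*}
Replacing $u_i$ by $|u_i|$ if necessary, I may assume $u_i\geqslant 0$ (this can only decrease the modular, since $\bigl||a|-|b|\bigr|\leqslant|a-b|$ and $|{|a|}|=|a|$), and the resulting functions still belong to $\tilde{\mathcal{W}}^{s,q(.),p(.,.)}(\Omega)$ since this space is a lattice.

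Next I set $u:=\max\{u_1,u_2\}$ and $v:=\min\{u_1,u_2\}$. Pointwise one has $u\geqslant 1$ a.e.\ on $A\cup B$ and $v\geqslant 1$ a.e.\ on $A\cap B$; the remark following Lemma \ref{inegualite modulaire} guarantees $u,v\in\tilde{\mathcal{W}}^{s,q(.),p(.,.)}(\Omega)$, so $u\in\mathcal{R}^{s,\overline{\Omega}}_{q(.),p(.,.)}(A\cup B)$ and $v\in\mathcal{R}^{s,\overline{\Omega}}_{q(.),p(.,.)}(A\cap B)$. Applying the modular inequality of Lemma \ref{inegualite modulaire} and the definition of capacity then yields
\begin{equation*}
C^{s,\Omega}_{q(.),p(.,.)}(A\cup B)+C^{s,\Omega}_{q(.),p(.,.)}(A\cap B)\leqslant \rho^{s,\Omega}_{q(.),p(.,.)}(u)+\rho^{s,\Omega}_{q(.),p(.,.)}(v)\leqslant \rho^{s,\Omega}_{q(.),p(.,.)}(u_1)+\rho^{s,\Omega}_{q(.),p(.,.)}(u_2),
\end{equation*}
and sending $\varepsilon\downarrow 0$ closes the relatively open case.

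For arbitrary $A,B\subset\overline{\Omega}$ I use the outer-regularity definition: given $\varepsilon>0$ pick relatively open $O_A\supset A$, $O_B\supset B$ with $C^{s,\Omega}_{q(.),p(.,.)}(O_A)\leqslant C^{s,\Omega}_{q(.),p(.,.)}(A)+\varepsilon$ and similarly for $B$. Since $O_A\cup O_B\supset A\cup B$ and $O_A\cap O_B\supset A\cap B$ are both relatively open, the open-set case combined with monotonicity $(C_2)$ gives
\begin{equation*}
C^{s,\Omega}_{q(.),p(.,.)}(A\cup B)+C^{s,\Omega}_{q(.),p(.,.)}(A\cap B)\leqslant C^{s,\Omega}_{q(.),p(.,.)}(O_A)+C^{s,\Omega}_{q(.),p(.,.)}(O_B),
\end{equation*}
and letting $\varepsilon\downarrow 0$ finishes the proof. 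The only delicate point is verifying that $\max$ and $\min$ preserve membership in $\tilde{\mathcal{W}}^{s,q(.),p(.,.)}(\Omega)$ (not just in $\mathcal{W}^{s,q(.),p(.,.)}(\Omega)$), which is exactly the content of the remark immediately following Lemma \ref{inegualite modulaire}; the rest is routine once the modular inequality is available.
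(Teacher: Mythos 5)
Your proof is correct and follows the same core strategy as the paper's: apply Lemma \ref{inegualite modulaire} to the $\max$ and $\min$ of near-optimal admissible functions for $A$ and $B$, observe these are admissible for $A\cup B$ and $A\cap B$, and pass to the limit $\varepsilon\downarrow0$. You are in fact slightly more careful than the paper in two places: you explicitly reduce the general case to relatively open sets via outer regularity (the admissible class $\mathcal{R}^{s,\overline{\Omega}}_{q(.),p(.,.)}$ is only defined for relatively open sets, whereas the paper applies it directly to arbitrary $A,B$), and you note the replacement $u_i\mapsto|u_i|$ needed to meet the nonnegativity hypothesis of Lemma \ref{inegualite modulaire}; both points are glossed over in the paper's version.
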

\begin{proof}
Let $ A $ and $ B $ be two subsets of $ \overline{\Omega}$. Let $\varepsilon>0$, $ u_{1}\in\mathcal{R}^{s, \overline{\Omega}}_{q(.),p(.,.)}(A)$ and $ u_{2}\in\mathcal{R}^{s, \overline{\Omega}}_{q(.),p(.,.)}(B)$ such that 
\begin{equation*}
\rho_{q(.),p(.,.)}^{s,\Omega}(u_{1})\leqslant C_{q(.),p(.,.)}^{s,\Omega}(A)+\dfrac{\varepsilon}{2}
\end{equation*}
and
\begin{equation*}
\rho_{q(.),p(.,.)}^{s,\Omega}(u_{2})\leqslant C_{q(.),p(.,.)}^{s,\Omega}(B)+\dfrac{\varepsilon}{2}.
\end{equation*}
By Lemma \ref{inegualite modulaire}, we have that $\max\left\lbrace u_{1},u_{2}\right\rbrace$ and $\min\left\lbrace u_{1},u_{2}\right\rbrace$ are in $ \mathcal{\tilde{{W}}}^{s,q(.),p(.,.)}(\Omega)$ and 
\begin{equation*}
\rho_{q(.),p(.,.)}^{s,\Omega}(\max\left\lbrace u_{1},u_{2}\right\rbrace)+ \rho_{q(.),p(.,.)}^{s,\Omega}(\min\left\lbrace u_{1},u_{2}\right\rbrace)\leqslant  \rho_{q(.),p(.,.)}^{s,\Omega}(u_{1})+\rho_{q(.),p(.,.)}^{s,\Omega}(u_{2}).	
\end{equation*}
Since $ \max\left\lbrace u_{1},u_{2}\right\rbrace \in\mathcal{R}^{s, \overline{\Omega}}_{q(.),p(.,.)}(A\cup B)$ and $ \min\left\lbrace u_{1},u_{2}\right\rbrace \in\mathcal{R}^{s, \overline{\Omega}}_{q(.),p(.,.)}(A\cap B)$, it follows that,
\begin{eqnarray*}
C_{q(.),p(.,.)}^{s,\Omega}(A\cup B)+C_{q(.),p(.,.)}^{s,\Omega}(A\cap B)&\leqslant& \rho_{q(.),p(.,.)}^{s,\Omega}(u_{1})+\rho_{q(.),p(.,.)}^{s,\Omega}(u_{2})\\
&\leqslant& C_{q(.),p(.,.)}^{s,\Omega}(A)+C_{q(.),p(.,.)}^{s,\Omega}(B)+\varepsilon, 	
\end{eqnarray*}
which yields the claim, as $ \varepsilon $ tend to zero.     
\end{proof}     
\begin{definition}
	\item A set $P\subset\Omega$ is called $ (s,q(.),p(.,.))$-relatively polar if $C_{q(.),p(.,.)}^{s,\Omega}(P)=0$.
\item We say that a property holds on a  set $ A\subset\Omega$ 
$ (s,q(.),p(.,.))$-relatively quasi everywhere ($ (s,q(.),p(.,.)) $-r.q.e., for short) if there exists a $(s,q(.),p(.,.))$-relatively polar set $P\subset A$ such that the property holds everywhere on $ A\setminus P$.
\end{definition}
By definition $\mathcal{W}^{s,q(.),p(.,.)}(\Omega)\cap \mathscr{C}_{c}(\overline{\Omega})$ is dense in $ \tilde{{\mathcal{W}}}^{s,q(.),p(.,.)}(\Omega)$ which is complete Banach space. The next result gives a way to find a $ (s,q(.),p(.,.)) $-relatively quasi everywhere converging subsequence.
\begin{theorem}\label{subsequence which converges $p(.)$-r.q.e.}
 Let $s\in(0,1), q\in \mathcal{P}(\Omega)$ and 
	$p\in\mathcal{P}(\Omega\times\Omega)$. For each Cauchy sequence with respect to the $\mathcal{W}^{s,q(.),p(.,.)}(\Omega)$-norm of functions in $\tilde{\mathcal{W}}^{s,q(.),p(.,.)}(\Omega)$ there exists a subsequence which converges $(s,q(.),p(.,.))$-r.q.e. in $\Omega$. Moreover, the convergence is uniform outside a set of arbitrary small relative $(s,q(.),p(.,.))$-capacity.
\end{theorem}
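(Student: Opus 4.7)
The plan is a capacity-theoretic Borel--Cantelli argument, built on three ingredients: the density of $\mathcal{W}^{s,q(.),p(.,.)}(\Omega)\cap\mathscr{C}_{c}(\overline{\Omega})$ in $\tilde{\mathcal{W}}^{s,q(.),p(.,.)}(\Omega)$, the strong subadditivity of $C^{s,\Omega}_{q(.),p(.,.)}$ established in Proposition \ref{strong subbaditivity de capacity}, and the standard modular--norm inequality $\rho^{s,\Omega}_{q(.),p(.,.)}(u)\leq \|u\|_{\mathcal{W}^{s,q(.),p(.,.)}(\Omega)}^{r^{-}}$ valid whenever $\|u\|_{\mathcal{W}^{s,q(.),p(.,.)}(\Omega)}\leq 1$, where $r^{-}:=\min(q^{-},p^{-})>1$. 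Given a $\mathcal{W}^{s,q(.),p(.,.)}(\Omega)$-Cauchy sequence $(u_{n})$ in $\tilde{\mathcal{W}}^{s,q(.),p(.,.)}(\Omega)$, I would first extract a subsequence $(u_{n_{k}})$ with $\|u_{n_{k}}-u_{n_{k+1}}\|_{\mathcal{W}^{s,q(.),p(.,.)}(\Omega)}\leq 4^{-k}$, then choose continuous approximants $v_{k}\in\mathcal{W}^{s,q(.),p(.,.)}(\Omega)\cap \mathscr{C}_{c}(\overline{\Omega})$ with $\|u_{n_{k}}-v_{k}\|_{\mathcal{W}^{s,q(.),p(.,.)}(\Omega)}\leq 4^{-k}$, so that $\|v_{k+1}-v_{k}\|_{\mathcal{W}^{s,q(.),p(.,.)}(\Omega)}\leq 3\cdot 4^{-k}$ by the triangle inequality.

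The central estimate is on the level set
\begin{equation*}
O_{k}:=\left\lbrace x\in\overline{\Omega}:|v_{k+1}(x)-v_{k}(x)|>2^{-k}\right\rbrace.
\end{equation*}
Continuity of $v_{k+1}-v_{k}$ on $\overline{\Omega}$ makes $O_{k}$ relatively open, and the test function $w_{k}:=2^{k}|v_{k+1}-v_{k}|$ lies in $\tilde{\mathcal{W}}^{s,q(.),p(.,.)}(\Omega)$ with $w_{k}\geq 1$ on $O_{k}$, hence $w_{k}\in \mathcal{R}^{s,\overline{\Omega}}_{q(.),p(.,.)}(O_{k})$. The definition of the relative capacity and the modular--norm inequality give $C^{s,\Omega}_{q(.),p(.,.)}(O_{k})\leq \rho^{s,\Omega}_{q(.),p(.,.)}(w_{k})\leq C\, 2^{-kr^{-}}$, which is summable since $r^{-}>1$. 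Setting $E_{j}:=\bigcup_{k\geq j}O_{k}$, the countable subadditivity $(C_{5})$ yields $C^{s,\Omega}_{q(.),p(.,.)}(E_{j})\to 0$ as $j\to\infty$. For $x\notin E_{j}$ one has $|v_{k+1}(x)-v_{k}(x)|\leq 2^{-k}$ for every $k\geq j$, so the telescoping series is dominated by $\sum 2^{-k}$ and $(v_{k})$ converges uniformly on $\overline{\Omega}\setminus E_{j}$, hence pointwise off the capacity-zero set $N:=\bigcap_{j}E_{j}$.

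To propagate the conclusion from $(v_{k})$ to $(u_{n_{k}})$, I would run an analogous Borel--Cantelli estimate on the residuals $u_{n_{k}}-v_{k}$, whose norms decay like $4^{-k}$: after passing to a further subsequence if necessary, approximate each residual by a continuous function in $\mathcal{W}^{s,q(.),p(.,.)}(\Omega)\cap\mathscr{C}_{c}(\overline{\Omega})$, apply the same level-set/capacity bound, and iterate on the second-order residual so that the accumulated exceptional sets still satisfy a summable capacity bound. Summing the two families shows that $u_{n_{k}}-v_{k}\to 0$ $(s,q(.),p(.,.))$-r.q.e. and uniformly off arbitrarily small capacity sets, which combined with the first part yields the desired r.q.e. convergence of $(u_{n_{k}})$ together with the uniform convergence claim. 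The main obstacle is precisely this transfer step: the capacity bound on $\{|v|>\lambda\}$ is only immediate when $v$ is continuous, so every stage of the argument is forced through continuous approximations, and one must choose the decay rates and the extracted subsequences consistently so that the continuous skeleton $(v_{k})$ and the residuals $(u_{n_{k}}-v_{k})$ are both dominated by summable capacity estimates on a common controlled family of exceptional sets.
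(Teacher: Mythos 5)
Your continuous-skeleton argument (rapidly Cauchy subsequence, level sets of differences, unit-ball bound on the modular, summability and $(C_{5})$, telescoping) reproduces the paper's proof almost line for line. The only cosmetic differences are the decay rates: the paper uses gaps $8^{-i}$ and the plain bound $\rho^{s,\Omega}_{q(.),p(.,.)}(u)\leq\|u\|_{\mathcal{W}^{s,q(.),p(.,.)}(\Omega)}$ (valid when $\|u\|\leq 1$) to get $C^{s,\Omega}_{q(.),p(.,.)}(G_{i})\leq 4^{-i}$, whereas you use gaps $4^{-k}$ and the sharper $\rho(u)\leq\|u\|^{r^{-}}$; note, though, that $\sum_{k}2^{-kr^{-}}$ converges for every $r^{-}>0$, so you do not actually need $r^{-}>1$ here, and the plain bound would already give you summability.

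Where you genuinely diverge is the ``transfer step,'' and there you have flagged a real soft spot in the paper that the paper does not, in fact, resolve your way: the paper's proof has no transfer step at all. It treats the $u_{i}$ directly as continuous --- it asserts that ``$G_{i}$ is an open set in $\Omega$'' (false for arbitrary $u_{i}\in\tilde{\mathcal{W}}^{s,q(.),p(.,.)}(\Omega)$) and concludes that ``$(u_{i})$ is a sequence of continuous functions on $\Omega$'' --- which is harmless in context, because the theorem is only ever invoked for sequences in $\mathcal{W}^{s,q(.),p(.,.)}(\Omega)\cap\mathscr{C}_{c}(\overline{\Omega})$ (in Theorem~\ref{quasicontinuous representative}) or for quasicontinuous ones (in Corollary~\ref{sous suite}), where the level sets are, or can be made, relatively open. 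Your iterate-the-residuals attempt, on the other hand, does not close as written: approximating $u_{n_{k}}-v_{k}$ by a further continuous function and iterating only produces better and better \emph{norm} approximants, never pointwise values of $u_{n_{k}}$ itself. An a.e.\ equivalence class has no distinguished pointwise values until a quasicontinuous representative has been fixed, and fixing such a representative is precisely what Theorem~\ref{quasicontinuous representative} subsequently uses the present theorem to achieve --- so it cannot be assumed here. The clean repair is to restrict the hypothesis to pointwise-defined functions (continuous on $\overline{\Omega}$, or pre-chosen $(s,q(.),p(.,.))$-r.q.c.\ representatives, adapting the level-set step as in Corollary~\ref{sous suite}); then $v_{k}=u_{n_{k}}$, the transfer is vacuous, and your first half is the whole proof.
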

\begin{proof}
	Let $(u_{i})$ be a Cauchy sequence in $\tilde{\mathcal{W}}^{s,q(.),p(.,.)}(\Omega)$. Without loss of generality, we denote again by $(u_{i})$ the subsequence of $(u_{i})$ such that
	\begin{eqnarray*}
		\left\| u_{i+1}-u_{i} \right\|_{\mathcal{W}^{s,q(.),p(.,.)}(\Omega)}\leq \dfrac{1}{8^{i}}, \ i\in \mathbb{N}.   
	\end{eqnarray*}
	Put
	\begin{eqnarray*}
		G_{i}:=\left\lbrace x\in\Omega: \ \left|u_{i+1}(x)-u_{i}(x) \right|>\dfrac{1}{2^{i}}, \ i\in \mathbb{N}\right\rbrace   
	\end{eqnarray*}
	and 
	\begin{eqnarray*}
		G_{k}:=\bigcup_{i=k}^{\infty}G_{i}.  
	\end{eqnarray*}
	Hence $G_{i}$ is an open set in $\Omega$ and $ 2^{i} \left|u_{i+1}(x)-u_{i}(x) \right|>1$ on $G_{i}$. Since 
	$\mathcal{W}^{s,q(.),p(.,.)}(\Omega)$ is a Banach lattice, we deduce that $\tilde{\mathcal{W}}^{s,q(.),p(.,.)}(\Omega)$ is also a Banach lattice. Therefore, 
	\begin{eqnarray*}
		2^{i} \left|u_{i+1}(x)-u_{i}(x) \right|\in\tilde{\mathcal{W}}^{s,q(.),p(.,.)}(\Omega)  
		\ \textnormal{and}\ \left\| 2^{i} \left|u_{i+1}(x)-u_{i}(x) \right|\right\|_{\mathcal{W}^{s,q(.),p(.,.)}(\Omega)}\leqslant \dfrac{1}{4^{i}}\leqslant 1. 
	\end{eqnarray*}
	By the unit ball property (\cite[Lemma 2.1.14]{Hasto}), we get that
	\begin{eqnarray*}
		\rho_{q(.),p(.,.)}^{s,\Omega}(2^{i} \left|u_{i+1}(x)-u_{i}(x) \right|)\leqslant 
		2^{i}\left\| u_{i+1}-u_{i}\right\|_{\mathcal{W}^{s,q(.),p(.,.)}(\Omega)}\leqslant \dfrac{1}{4^{i}}. 
	\end{eqnarray*}
	Consequently
	\begin{eqnarray*}
	C_{q(.),p(.,.)}^{s,\Omega}(G_{i})\leqslant \dfrac{1}{4^{i}}.
	\end{eqnarray*}
	By property $(C_{5})$ of the fractional relative $(s,q(.),p(.,.))$-capacity, we obtain that
	\begin{eqnarray*}
		C_{q(.),p(.,.)}^{s,\Omega}(G_{k})&=&C_{q(.),p(.,.)}^{s,\Omega}(\cup^{\infty}_{i=k} G_{i})\\
		&\leqslant& \sum_{i=k}^{\infty}C_{q(.),p(.,.)}^{s,\Omega}(G_{i})\\
		&\leqslant& \sum_{i=k}^{\infty} \dfrac{1}{4^{i}}=\dfrac{1}{4^{k-1}}.
	\end{eqnarray*}
	Hence
	\begin{eqnarray*}
		C_{q(.),p(.,.)}^{s,\Omega}(\cap_{k=1}^{\infty} G_{k})&=&C_{q(.),p(.,.)}^{s,\Omega}(\cap_{k=1}^{\infty} \cup^{\infty}_{i=k} G_{i})\\
		&\leqslant& \lim_{k\rightarrow \infty}C_{q(.),p(.,.)}^{s,\Omega}(\cup_{i=k}^{\infty}G_{i})\\
		&\leqslant& \lim_{k\rightarrow \infty} \dfrac{1}{4^{k-1}}=0.
	\end{eqnarray*} 
	Thus 
	\begin{eqnarray*}
		C_{q(.),p(.,.)}^{s,\Omega}(\cap_{k=1}^{\infty} G_{k})=0.
	\end{eqnarray*}
	Consequently $\cap_{k=1}^{\infty} \cup^{\infty}_{i=k} G_{i}  $ is a $(s,q(.),p(.,.))$-relatively polar set. Moreover $u_{i}$ converges pointwise in $ \Omega\setminus \cap_{k=1}^{\infty} \cup^{\infty}_{i=k} G_{i}  $. Since $ \left|u_{i+1}(x)-u_{i}(x) \right|\leqslant \dfrac{1}{2^{i}}   $ in $ \Omega\setminus \cap_{k=1}^{\infty} \cup^{\infty}_{i=k} G_{i}  $ for all $i\geq k $, we have that $(u_{i})$ is a sequence of continuous functions on $\Omega$ which converges uniformly in $\Omega\setminus G_{k}$.          
\end{proof}
In the following we give sufficient conditions for the existence of a $(s,q(.),p(.,.))$-relatively quasicontinuous representative for functions in $\tilde{\mathcal{W}}^{s,q(.),p(.,.)}(\Omega)$.
\begin{definition}
	\item  A function $ u:\Omega\longrightarrow \mathbb{R}$ is said to be $(s,q(.),p(.,.))$-relatively quasicontinuous ($ (s,q(.),p(.,.)) $-r.q.c., for short) if for every $ \varepsilon>0 $, there exists a relatively open set $ O_{\varepsilon}\subset\Omega $ such that $C_{q(.),p(.,.)}^{s,\Omega}(O_{\varepsilon})<\varepsilon$ and $ u $ is continuous on $ \Omega\setminus O_{\varepsilon}$.    
\end{definition}
\begin{theorem}\label{quasicontinuous representative}
 Let $s\in(0,1), q\in \mathcal{P}(\Omega)$ and 
$p\in\mathcal{P}(\Omega\times\Omega)$. Then for every $ u\in\tilde{\mathcal{W}}^{s,q(.),p(.,.)}(\Omega) $, there exists a unique (up to a $p(.)$-relative polar set) $(s,q(.),p(.,.))$-r.q.c. function $\tilde{u}:\Omega\longrightarrow \mathbb{R}$ such that $\tilde{u}=u$ a.e. in $\Omega$. 
\end{theorem}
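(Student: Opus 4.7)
The plan is to construct $\tilde{u}$ as a pointwise limit of continuous approximations, leveraging both the density used to define $\tilde{\mathcal{W}}^{s,q(.),p(.,.)}(\Omega)$ and the preceding theorem on subsequences converging $(s,q(.),p(.,.))$-r.q.e. Concretely, given $u\in\tilde{\mathcal{W}}^{s,q(.),p(.,.)}(\Omega)$, the definition of that space furnishes a sequence $(u_i)\subset\mathcal{W}^{s,q(.),p(.,.)}(\Omega)\cap\mathscr{C}_c(\overline{\Omega})$ with $u_i\to u$ in norm, which is Cauchy. After passing to the fast subsequence constructed in the preceding theorem (so that $\|u_{i+1}-u_i\|\leq 8^{-i}$), one obtains relatively open sets $G_k\subset\Omega$ with $C^{s,\Omega}_{q(.),p(.,.)}(G_k)\leq 4^{1-k}$, a relatively polar set $N:=\bigcap_k G_k$, and uniform convergence of $(u_i)$ on each $\Omega\setminus G_k$.

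For existence, I would set $\tilde{u}(x):=\lim_i u_i(x)$ on $\Omega\setminus N$ and extend by $0$ on $N$. Since every $u_i$ is continuous on $\overline{\Omega}$ and the convergence is uniform on $\Omega\setminus G_k$, the restriction $\tilde{u}|_{\Omega\setminus G_k}$ is continuous; choosing $k$ with $4^{1-k}<\varepsilon$ therefore witnesses the r.q.c. property with the relatively open set $G_k$. Norm convergence in $\mathcal{W}^{s,q(.),p(.,.)}(\Omega)$ implies $L^{q(.)}$-convergence, so a further subsequence converges to $u$ almost everywhere; matching this with the r.q.e.\ convergence to $\tilde{u}$ (and the fact that relatively polar sets have Lebesgue measure zero by a standard capacity-measure comparison) yields $\tilde{u}=u$ a.e.

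The main obstacle is uniqueness. Given two r.q.c. representatives $\tilde{u}_1,\tilde{u}_2$ of $u$, set $v:=\tilde{u}_1-\tilde{u}_2$, which is r.q.c. and vanishes a.e. For each $k\in\mathbb{N}$ and $\varepsilon>0$, choose a relatively open $O_\varepsilon$ with $C^{s,\Omega}_{q(.),p(.,.)}(O_\varepsilon)<\varepsilon$ on whose complement $v$ is continuous. The set $A_{k,\varepsilon}:=\{x\in\Omega\setminus O_\varepsilon:|v(x)|>1/k\}$ is then relatively open in $\Omega\setminus O_\varepsilon$; if it were nonempty, a Lebesgue-density argument at any $x\in A_{k,\varepsilon}$, valid because $O_\varepsilon$ carries small Lebesgue measure, would contradict $v=0$ a.e. Hence $\{|v|>1/k\}\subset O_\varepsilon$, and letting $\varepsilon\to 0$ together with monotonicity $(C_2)$ forces $C^{s,\Omega}_{q(.),p(.,.)}(\{|v|>1/k\})=0$. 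Countable subadditivity $(C_5)$ over $k$ then gives $C^{s,\Omega}_{q(.),p(.,.)}(\{v\neq 0\})=0$, i.e., $\tilde{u}_1=\tilde{u}_2$ $(s,q(.),p(.,.))$-r.q.e. The delicate ingredient is the capacity-to-Lebesgue-measure bound used to ensure density points exist in $\Omega\setminus O_\varepsilon$; I would extract it from the unit ball property together with the comparison estimates from the capacity theory developed in \cite{BA3}.
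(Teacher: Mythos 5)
Your existence argument matches the paper's almost verbatim: both pass to the fast Cauchy subsequence of Theorem \ref{subsequence which converges $p(.)$-r.q.e.}, take the pointwise limit as $\tilde u$, and read quasicontinuity off the uniform convergence on $\Omega\setminus G_k$. For uniqueness the paper gives essentially no argument — it merely notes that $\tilde u-\tilde v$ is r.q.c.\ and vanishes a.e., deferring the conclusion to Theorem \ref{egalite quasipartout} (Kilpel\"ainen's result, cited to \cite{Kilpelainen}). Your attempt to make this self-contained is a genuinely different, more explicit route, and the overall plan (force $C^{s,\Omega}_{q(.),p(.,.)}(\{|v|>1/k\})<\varepsilon$ for every $\varepsilon$) is the right one, but the step you use to get there is broken.

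The gap is in the density argument. From relative openness you get a ball $B(x,r)$ with $B(x,r)\cap(\Omega\setminus O_\varepsilon)\subset A_{k,\varepsilon}\subset\{|v|>1/k\}$, and you want $|B(x,r)\setminus O_\varepsilon|>0$ to contradict $v=0$ a.e. But the admissible radius $r$ is determined by how $A_{k,\varepsilon}$ sits as a relatively open subset of $\Omega\setminus O_\varepsilon$, i.e.\ it depends on $O_\varepsilon$ and hence on $\varepsilon$, so the bound $|O_\varepsilon|\le C^{s,\Omega}_{q(.),p(.,.)}(O_\varepsilon)<\varepsilon$ gives no control: an open set of tiny measure can be dense near $x$, making $B(x,r)\setminus O_\varepsilon$ Lebesgue-null for the only $r$ you are allowed. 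Thus $A_{k,\varepsilon}=\emptyset$ does not follow, and $\{|v|>1/k\}\subset O_\varepsilon$ is not established. The standard Kilpel\"ainen fix sidesteps this entirely: since $E_k:=\{|v|>1/k\}$ satisfies $|E_k|=0$ and $E_k\cup O_\varepsilon$ is open (exactly the relative-openness you proved), and since membership in $\mathcal R^{s,\overline\Omega}_{q(.),p(.,.)}(O)$ is an a.e.\ condition, adjoining a Lebesgue-null set to an open set leaves the admissible class unchanged, so $C^{s,\Omega}_{q(.),p(.,.)}(E_k\cup O_\varepsilon)=C^{s,\Omega}_{q(.),p(.,.)}(O_\varepsilon)<\varepsilon$; monotonicity $(C_2)$ then gives $C^{s,\Omega}_{q(.),p(.,.)}(E_k)<\varepsilon$ with no density argument at all. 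That observation, not a measure comparison from \cite{BA3}, is the missing ingredient.
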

\begin{proof}
	Let $u\in\tilde{\mathcal{W}}^{s,q(.),p(.,.)}(\Omega)$. There exists a sequence $\mathcal{W}^{s,q(.),p(.,.)}(\Omega)\cap \mathscr{C}_{c}(\overline{\Omega})$ such that 
	$u_{i}\longrightarrow u$ in $\mathcal{W}^{s,q(.),p(.,.)}(\Omega)$. By Theorem \ref{subsequence which converges $p(.)$-r.q.e.} there exists a subsequence which converges $(s,q(.),p(.,.))$-r.q.e. in $ \Omega $ and uniformly outside a $(s,q(.),p(.,.))$-relative polar set. Let $\tilde{u}\longrightarrow u_{i}$ be the point-wise limit of 
	$(u_{i})$. By the uniform convergence we get that $\tilde{u}:\Omega\longrightarrow \mathbb{R}$ is $(s,q(.),p(.,.))$-r.q.c. $\tilde{u}=u$ a.e. in $\Omega$. For the uniqueness, we assume that there exists another $\tilde{v}$ $(s,q(.),p(.,.))$-r.q.c. in $\Omega$ such that $\tilde{v}=u$ a.e. in $\Omega$. Hence $\tilde{u}-\tilde{v}=0$ a.e. in $\Omega$ and $\tilde{u}-\tilde{v}=0$ is $(s,q(.),p(.,.))$-r.q.c. in $\Omega$.    
\end{proof}
\begin{corollary}\label{sous suite}
 Let $s\in(0,1), q\in \mathcal{P}(\Omega)$ and 
$p\in\mathcal{P}(\Omega\times\Omega)$. Let $(u_{i})$ be a sequence of $(s,q(.),p(.,.))$-r.q.c. functions in $\tilde{\mathcal{W}}^{s,q(.),p(.,.)}(\Omega)$ which converges to a $p(.)$-r.q.c. function $u\in\tilde{\mathcal{W}}^{s,q(.),p(.,.)}(\Omega)$. Then there exists a subsequence which converges
	$p(.)$-r.q.e. to $u$ on $\Omega$.
\end{corollary}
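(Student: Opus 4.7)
The convergence $u_{i}\to u$ in $\tilde{\mathcal{W}}^{s,q(.),p(.,.)}(\Omega)$ makes $(u_{i})$ a Cauchy sequence in that norm, so Theorem~\ref{subsequence which converges $p(.)$-r.q.e.} furnishes a subsequence $(u_{i_{k}})$, a relatively polar set $P_{1}\subset\Omega$ and a function $v$ such that $u_{i_{k}}(x)\to v(x)$ for every $x\in\Omega\setminus P_{1}$, the convergence being uniform outside sets of arbitrarily small $(s,q(.),p(.,.))$-capacity. Since each $u_{i_{k}}$ is itself $(s,q(.),p(.,.))$-r.q.c., and a uniform limit of r.q.c. functions on the complement of a small-capacity set is again r.q.c.\ (collect the individual exceptional sets into a single set of controlled capacity using strong subadditivity, Proposition~\ref{strong subbaditivity de capacity}, together with $(C_{5})$), the limit $v$ is $(s,q(.),p(.,.))$-r.q.c.

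Next I would identify $v$ with $u$ almost everywhere. Norm convergence of $(u_{i_{k}})$ to $u$ implies $L^{q(.)}$-convergence, hence convergence in measure, so after a further extraction $u_{i_{k}}\to u$ a.e.\ in $\Omega$. At the same time, $(s,q(.),p(.,.))$-relatively polar sets are Lebesgue null, because for any relatively open cover $O$ of a polar set and any admissible $w\in\mathcal{R}^{s,\overline{\Omega}}_{q(.),p(.,.)}(O)$ the inequality $|O|\leq \int_{O}|w(x)|^{q(x)}\,dx\leq \rho^{s,\Omega}_{q(.),p(.,.)}(w)$ forces $|O|$ to tend to zero with the capacity. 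Consequently the r.q.e.\ convergence $u_{i_{k}}\to v$ also entails a.e.\ convergence to $v$, and uniqueness of almost-everywhere limits yields $v=u$ a.e.\ in $\Omega$.

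The last step is to upgrade ``a.e.'' to ``r.q.e.''. The difference $v-u$ is $(s,q(.),p(.,.))$-r.q.c. and vanishes a.e.\ in $\Omega$, and the uniqueness argument at the end of the proof of Theorem~\ref{quasicontinuous representative} then produces a polar set $P_{2}$ such that $v=u$ on $\Omega\setminus P_{2}$. Setting $P:=P_{1}\cup P_{2}$, which is polar by $(C_{5})$, the chosen subsequence satisfies $u_{i_{k}}(x)\to u(x)$ for every $x\in\Omega\setminus P$, which is exactly the desired $(s,q(.),p(.,.))$-r.q.e.\ convergence. The only non-routine point is this final upgrade: knowing that an r.q.c.\ function vanishing a.e.\ must vanish quasi-everywhere, which is precisely the content of the uniqueness assertion in Theorem~\ref{quasicontinuous representative}; every other step is either a direct extraction of a subsequence or a straightforward application of an earlier result in this section.
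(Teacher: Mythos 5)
Your proof is logically sound but takes a genuinely different route from the paper, and it has one soft spot in the citations. The paper's own proof is shorter and more elementary: after extracting a rapidly convergent subsequence with $\sum_{k}2^{i_{k}}\|u_{i_{k}}-u\|_{\mathcal{W}^{s,q(.),p(.,.)}(\Omega)}\leq 1$, it defines the exceptional sets directly in terms of the \emph{target} function, $G_{k}:=\{x\in\Omega:|u_{i_{k}}(x)-u(x)|>2^{-i_{k}}\}$, and then re-runs the telescoping capacity estimate from Theorem~\ref{subsequence which converges $p(.)$-r.q.e.} verbatim. Because $u$ enters the definition of $G_{k}$ from the start, the limit of the subsequence is $u$ by construction, and the whole identification problem you spend the second half of your argument on (producing $v$, showing $v$ is r.q.c., showing $v=u$ a.e., upgrading to r.q.e.) simply never arises. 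What your approach buys in exchange is conceptual clarity: it isolates the ``r.q.c.\ plus a.e.\ equal implies r.q.e.\ equal'' principle as the only non-mechanical ingredient, which is a nice structural observation.

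The soft spot: you attribute that last upgrade to ``the uniqueness argument at the end of the proof of Theorem~\ref{quasicontinuous representative}''. As written in the paper, that uniqueness argument only records that $\tilde u-\tilde v$ is r.q.c.\ and vanishes a.e.\ --- it does not actually derive $\tilde u=\tilde v$ r.q.e.\ from those two facts. The result you are invoking is precisely Theorem~\ref{egalite quasipartout}, which the paper states separately and defers to Kilpel\"ainen. That theorem appears \emph{after} Corollary~\ref{sous suite} in the paper's ordering; since its proof is external (Kilpel\"ainen) there is no genuine circularity, but you should cite Theorem~\ref{egalite quasipartout} explicitly rather than leaning on the incomplete uniqueness remark in Theorem~\ref{quasicontinuous representative}. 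The paper's direct proof sidesteps this dependency entirely, which is presumably why it is structured that way.
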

\begin{proof}
	Let $ (u_{i_{k}} )$ be a subsequence of $ (u_{i} )$ such that
	\begin{eqnarray*}
		\sum_{k=1}^{\infty} 2^{i_{k}}\left\| u_{i_{k}}-u \right\|_{\mathcal{W}^{s,q(.),p(.,.)}(\Omega)}\leq 1.
	\end{eqnarray*} 
	Put
	\begin{eqnarray*}
		P:=\cap_{j=1}^{\infty} \cup^{\infty}_{k=j} G_{k}, 
	\end{eqnarray*}
	where 
	\begin{eqnarray*}
		G_{k}:=\left\lbrace x\in\Omega: \ \left|u_{i_{k}}(x)-u(x) \right|>\dfrac{1}{2^{i_{k}}}.\right\rbrace. 
	\end{eqnarray*}
	There exists $ j_{0}\in \mathbb{N}$ such that  
	\begin{eqnarray*}
		\left|u_{i_{k}}(x)-u(x) \right|\leqslant \dfrac{1}{2^{i_{k}}},\ \forall \ k\geq j_{0}.  
	\end{eqnarray*}
	Hence $u_{i_{k}}(x)$ converges uniformly in $ \Omega\setminus \cup _{k=j_{0}}^{\infty}G_{k} $ and everywhere in $ \Omega\setminus P $. By the same way in the proof of Theorem \ref{subsequence which converges $p(.)$-r.q.e.} we get that
	\begin{eqnarray*}
		C_{q(.),p(.,.)}^{s,\Omega}(P)=0.
	\end{eqnarray*}
	Hence $P$ is $(s,q(.),p(.,.))$-relatively polar set and the proof is finished.
\end{proof}
The following result shows that two $(s,q(.),p(.,.))$-relatively quasicontinuous representative for functions in $\tilde{\mathcal{W}}^{s,q(.),p(.,.)}(\Omega)$ given by Theorem \ref{quasicontinuous representative} that agree almost everywhere coincide in fact $(s,q(.),p(.,.))$-relatively quasieverywhere. For the proof we refer to Kilpel\"{a}inen \cite{Kilpelainen} which is stated in metric measure spaces.
 \begin{theorem}\label{egalite quasipartout}
 Let $s\in(0,1), q\in \mathcal{P}(\Omega)$ and 
 $p\in\mathcal{P}(\Omega\times\Omega)$. Assume that $ u $ and $ v $ are $(s,q(.),p(.,.))$-relatively quasicontinuous representative for functions in $\tilde{\mathcal{W}}^{s,q(.),p(.,.)}(\Omega)$. If $ u=v $ a.e. in $ \Omega $, then $ u=v \ (s,q(.),p(.,.))$-r.q.e in $ \Omega $.   
 \end{theorem}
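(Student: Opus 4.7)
The plan is to derive this theorem as an immediate consequence of Corollary \ref{sous suite} via a constant-sequence device, bypassing any direct capacity estimate. Since $u = v$ almost everywhere in $\Omega$, the two pointwise functions $u$ and $v$ represent the same equivalence class in the Banach space $\tilde{\mathcal{W}}^{s,q(.),p(.,.)}(\Omega)$. In particular
\[
\|u - v\|_{\mathcal{W}^{s,q(.),p(.,.)}(\Omega)} = 0.
\]

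I would then introduce the constant sequence $(u_i)_{i \in \mathbb{N}}$ defined by $u_i := u$ for every $i$. Each term is $(s,q(.),p(.,.))$-relatively quasicontinuous by hypothesis on $u$, and the sequence converges in the Sobolev norm to $v$, trivially, because $\|u_i - v\|_{\mathcal{W}^{s,q(.),p(.,.)}(\Omega)} = \|u - v\|_{\mathcal{W}^{s,q(.),p(.,.)}(\Omega)} = 0$ for every $i$. Since the limit $v$ is likewise $(s,q(.),p(.,.))$-r.q.c.\ by assumption, the hypotheses of Corollary \ref{sous suite} are satisfied, and the corollary produces a subsequence $(u_{i_k})$ which converges to $v$ $(s,q(.),p(.,.))$-r.q.e.\ in $\Omega$.

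By definition of r.q.e.\ convergence, there exists an $(s,q(.),p(.,.))$-relatively polar set $P \subset \Omega$ such that $u_{i_k}(x) \to v(x)$ for every $x \in \Omega \setminus P$. But the subsequence is the constant one, so $u_{i_k}(x) = u(x)$ for all $k$ and the pointwise limit equals $u(x)$. Hence $u(x) = v(x)$ for every $x \in \Omega \setminus P$, which is exactly the required conclusion $u = v$ $(s,q(.),p(.,.))$-r.q.e.\ in $\Omega$.

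No real obstacle arises: the delicate analytic content (the existence of a subsequence converging off an arbitrarily small capacity set, obtained via the unit ball property and strong subadditivity of $C^{s,\Omega}_{q(.),p(.,.)}$) has already been absorbed into Theorem \ref{subsequence which converges $p(.)$-r.q.e.} and Corollary \ref{sous suite}. The only conceptual point that merits being stated explicitly is that the constant sequence is a legitimate instance of Corollary \ref{sous suite}: the convergence hypothesis $\|u_i - v\|_{\mathcal{W}} \to 0$ is satisfied in the strongest possible sense because the norm difference vanishes identically, so no extraction argument is needed and any subsequence works.
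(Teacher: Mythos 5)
Your plan is appealing, but it is circular. The paper does not actually prove Theorem~\ref{egalite quasipartout}; it delegates the proof to Kilpel\"ainen's note. So any internal derivation must avoid tacitly assuming the uniqueness result, and that is exactly what happens here when you feed the constant sequence into Corollary~\ref{sous suite}.

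Trace through the proof of Corollary~\ref{sous suite} with your $u_i := u$ and with the limit function taken to be $v$. Any subsequence will do, and the exceptional sets become
\begin{equation*}
G_k = \left\{ x \in \Omega : \ |u(x) - v(x)| > 2^{-i_k} \right\}, \qquad P = \bigcap_{j} \bigcup_{k \ge j} G_k = \{x \in \Omega : u(x) \neq v(x)\}.
\end{equation*}
Thus the assertion $C_{q(.),p(.,.)}^{s,\Omega}(P) = 0$ that the corollary hands you is \emph{verbatim} the statement of Theorem~\ref{egalite quasipartout}. Now look at where that assertion comes from. The corollary's proof says it follows ``by the same way as in the proof of Theorem~\ref{subsequence which converges $p(.)$-r.q.e.}'', but the decisive step there is $C_{q(.),p(.,.)}^{s,\Omega}(G_i) \le \rho_{q(.),p(.,.)}^{s,\Omega}(2^i |u_{i+1} - u_i|)$, and that step is legitimate only because $G_i$ is (relatively) open --- the functions in Theorem~\ref{subsequence which converges $p(.)$-r.q.e.} come from $\mathcal{W}^{s,q(.),p(.,.)}(\Omega)\cap\mathscr{C}_c(\overline{\Omega})$, hence are continuous, hence $G_i$ is open and $2^i|u_{i+1}-u_i|$ is admissible. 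For your constant sequence the functions $u, v$ are only $(s,q(.),p(.,.))$-r.q.c., so $G_k$ is merely a measurable set of Lebesgue measure zero; the admissibility estimate against the relative capacity (which is defined via an infimum over \emph{relatively open} supersets) does not apply. Establishing that these measure-zero sets nonetheless have capacity zero is precisely the content of the theorem. So the argument assumes what it sets out to prove, with the constant-sequence device merely making the circularity transparent.

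The genuinely needed input is the Kilpel\"ainen argument (which the paper cites in place of a proof): for a compact $K \subset \{u \neq v\}$ and $\varepsilon > 0$, use quasicontinuity of $u-v$ to find an open $G$ with $C_{q(.),p(.,.)}^{s,\Omega}(G) < \varepsilon$ on whose complement $u-v$ is continuous; then a suitable open neighbourhood $V$ of $K\setminus G$ on which $|u-v|$ stays bounded below satisfies $V \setminus G$ open with $|V\setminus G| = 0$, hence $V \setminus G = \varnothing$, hence $K \subset G$, hence $C_{q(.),p(.,.)}^{s,\Omega}(K) < \varepsilon$; one concludes via the Choquet property $(C_3)$--$(C_4)$ and outer regularity. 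None of this topological/measure-theoretic content is captured by Corollary~\ref{sous suite}, so the delicate analysis has not in fact been ``absorbed'' as your last paragraph claims.
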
  
\section{{\textbf{ Fractional Variable exposent Sobolev trace spaces }}}
We now turn our attention in the question of traces of Sobolev functions on the boundary of the set of definition. This problem is more delicate than the interior one since under some regularity assumptions on the variable exponents $ q $  and $ p$ it is possible to approximate a Sobolev function in the space $ \mathcal{W}^{s,q(.),p(.,.)}$ by smooth function; see \cite{BA1}, the same is not true up to the boundary.

Recall that if $\left( \mathcal{A},\left\Vert .\right\Vert _{\mathcal{A}
}\right) $ is a Banach space of measurable functions on $
\mathbb{R}^{n}$ and $E\subset\mathbb{R}^{n}$ is a measurable set of positive Lebesgue measure, then $\mathcal{A}_{\mid E}$ is the \emph{trace space} defined as 
\begin{equation*}
\mathcal{A}_{\mid E}:=\left\{ 
\begin{array}{ccc}
f: & E\rightarrow  & 
\mathbb{R}
\end{array}
;\text{ there exists }F\in \mathcal{A}\text{ such that }F_{\mid E}=f\text{
	a.e.}\right\} \text{.}
\end{equation*}
This space is equipped with the norm
\begin{equation*}
\left\Vert f\right\Vert _{\mathcal{A}_{\mid E}}=\inf \left\{ \left\Vert
F\right\Vert _{\mathcal{A}}:F\in \mathcal{A}\text{ , }F_{\mid E}=f\text{ a.e.
}\right\} \text{.}
\end{equation*}
Denoting the trace operator by $\mathcal{T}F=F_{\mid E}$. If $ \Omega$ is a smooth open set of $ \mathbb{R}^{n} $, then the characterization of traces is well known. The theorems of Baalal-Berghout \cite[Theorem 2.1 and Theorem 3.1 ] {BA}, states that if $ \Omega\subset \mathbb{R}^{n}$ is a Lipschitz domain, and under some assumptions on the variable exponents $ q $ and $ p $, then there exist trace and extension operators
\begin{equation}\label{trace operator}
T: \mathcal{W}^{s,q(.),p(.,.)}(\mathbb{R}^{n})\longrightarrow  \mathcal{W}^{s,q(.),p(.,.)}(\Omega),
\end{equation}
\begin{equation}\label{extension operator}
E:  \mathcal{W}^{s,q(.),p(.,.)}(\Omega)\longrightarrow  \mathcal{W}^{s,q(.),p(.,.)}(\mathbb{R}^{n}).
\end{equation}
Moreover, we have the following characterization: 
\begin{equation*}
\mathcal{W}^{s,q(.),p(.,.)}(\mathbb{R}^{n})=\ker T \oplus 
E(\mathcal{W}^{s,q(.),p(.,.)}(\Omega)).
\end{equation*}
Hence operators \eqref{trace operator} and \eqref{extension operator} characterizes traces on $\Omega$ of functions in $ \mathcal{W}^{s,q(.),p(.,.)}(\mathbb{R}^{n})$. For further results on variable exponent Sobolev trace spaces, see  \cite{Del pezzo,Diening and Hasto,Diening and Hasto 1,Edmunds, Edmunds 1,Harjulehto,Hasto3}; se also \cite{Kilpelainen1}.

In this section we study different definitions of fractional variable exponent Sobolev zero trace spaces in a proper open subset $ \Omega $ of $ \mathbb{R}^{n}$.
First, note that if $u\in \mathcal{W}^{s,q(.),p(.,.)}(\Omega)$ with compact support in $\Omega$. Then $ u$ is vanish on $\partial\Omega$. Indeed, let $u\in \mathcal{W}^{s,q(.),p(.,.)}(\Omega)$ and let $\psi\in \mathscr{C}^{\infty}_{0}(\Omega)$ be such that $\psi=1$ on the support of $u$. If a sequence $\psi_{j}$ converges to $u$ in $\mathcal{W}^{s,q(.),p(.,.)}(\Omega)$, then $\psi \psi_{j}$ converges to $\psi u=u$ in $\mathcal{W}^{s,q(.),p(.,.)}(\Omega)$.

\begin{definition}
	Let $s\in(0,1), q\in \mathcal{P}(\Omega)$ and 
$p\in\mathcal{P}(\Omega\times\Omega)$. The fractional Sobolev space $\mathcal{W}^{s,q(.),p(.,.)}_{0}(\Omega)$ is the closure of the set of $\mathcal{W}^{s,q(.),p(.,.)}(\Omega)$-functions with compact support, i.e.
\begin{equation*}
\left\lbrace u\in \mathcal{W}^{s,q(.),p(.,.)}(\Omega):u=u\chi_{K} \ \text{for a compact} \ K\subset \Omega  \right\rbrace. 
\end{equation*}
in $\mathcal{W}^{s,q(.),p(.,.)}(\Omega)$. 
\end{definition}
Let us looking $\mathcal{W}^{s,q(.),p(.,.)}_{0}(\Omega)$ as an ideal of $\mathcal{W}^{s,q(.),p(.,.)}(\Omega)$. The closed lattice ideals of the Sobolev spaces $\mathcal{W}^{s,q(.),p(.,.)}(\Omega)$ are those subspaces which consist of all functions which vanish on a prescribed set. To be precise, we have the following result:
\begin{theorem}\label{closed ideal}
	Let $s\in(0,1), q\in \mathcal{P}(\Omega)$ and 
	$p\in\mathcal{P}(\Omega\times\Omega)$. Then the space $\mathcal{W}^{s,q(.),p(.,.)}_{0}(\Omega)$ is a closed ideal in $\mathcal{W}^{s,q(.),p(.,.)}(\Omega)$.
\end{theorem}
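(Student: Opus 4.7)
The theorem has two halves: closedness and the ideal property. Closedness is immediate from the definition, since $\mathcal{W}^{s,q(.),p(.,.)}_{0}(\Omega)$ is by construction the closure in $\mathcal{W}^{s,q(.),p(.,.)}(\Omega)$ of the set of compactly supported functions. All the real work is in the ideal property: given $u\in\mathcal{W}^{s,q(.),p(.,.)}_{0}(\Omega)$ and $v\in\mathcal{W}^{s,q(.),p(.,.)}(\Omega)$ with $|v|\le|u|$ a.e., I must exhibit $v$ as a limit of compactly supported functions.

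My plan is to leverage the Banach lattice structure of $\mathcal{W}^{s,q(.),p(.,.)}(\Omega)$ already recalled in the introduction via \cite{BA3}. First I reduce to $v\ge 0$ by decomposing $v=v^{+}-v^{-}$: both $v^{\pm}$ are nonnegative, both satisfy $0\le v^{\pm}\le |u|$, and since $\mathcal{W}^{s,q(.),p(.,.)}_{0}(\Omega)$ is a vector subspace, it suffices to handle each separately and then take the difference.

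Assuming $v\ge 0$, pick a sequence $(u_{n})$ of compactly supported representatives with $u_{n}\to u$ in $\mathcal{W}^{s,q(.),p(.,.)}(\Omega)$, and set
\[
v_{n}:=v\wedge|u_{n}|.
\]
Each $v_{n}$ lies in $\mathcal{W}^{s,q(.),p(.,.)}(\Omega)$ by the Banach lattice property (min of two members is again a member), and $0\le v_{n}\le|u_{n}|$ forces $v_{n}=0$ wherever $u_{n}=0$, whence $\operatorname{supp}(v_{n})\subseteq\operatorname{supp}(u_{n})$ is compact in $\Omega$. For the convergence, the pointwise identity $v-v_{n}=(v-|u_{n}|)^{+}$ combined with $v\le|u|$ yields
\[
0\le v-v_{n}\le(|u|-|u_{n}|)^{+}\le\bigl||u|-|u_{n}|\bigr|,
\]
and the lattice norm inequality $\bigl\|\,|u|-|u_{n}|\,\bigr\|\le\|u-u_{n}\|$ gives $\|v-v_{n}\|\to 0$. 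Thus $v$ is a norm limit of compactly supported elements, i.e. $v\in\mathcal{W}^{s,q(.),p(.,.)}_{0}(\Omega)$.

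The delicate step I expect to be the main obstacle is the Banach lattice inequality on the Luxembourg-type norm of $\mathcal{W}^{s,q(.),p(.,.)}$. Invoking it directly is clean because the paper already records that the space is a Banach lattice; if one wished to avoid that appeal, one would substitute a modular argument in the spirit of Lemma \ref{inegualite modulaire}, controlling $\rho^{s,\Omega}_{q(.),p(.,.)}(v-v_{n})$ by $\rho^{s,\Omega}_{q(.),p(.,.)}(u-u_{n})$ through the pointwise bound above and convexity of $t\mapsto t^{p(x,y)}$, and then translating back to norms via the unit ball property.
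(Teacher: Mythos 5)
Your overall plan is sound and arguably cleaner than the paper's own: reduce to $v\ge 0$, set $v_n:=v\wedge|u_n|$ so that compact support comes for free, and pass to the limit. (The paper truncates against $\varphi_n$ without taking absolute values, which leaves a support issue when $\varphi_n$ changes sign.) The genuine gap is in the convergence step. You invoke, twice, the implication $|f|\le|g|$ a.e. $\Rightarrow\|f\|_{\mathcal{W}^{s,q(.),p(.,.)}(\Omega)}\le\|g\|_{\mathcal{W}^{s,q(.),p(.,.)}(\Omega)}$: once to pass from $0\le v-v_n\le\bigl|\,|u|-|u_n|\,\bigr|$ to $\|v-v_n\|\le\bigl\|\,|u|-|u_n|\,\bigr\|$, and once to claim $\bigl\|\,|u|-|u_n|\,\bigr\|\le\|u-u_n\|$. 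This implication is \emph{false} in $\mathcal{W}^{s,q(.),p(.,.)}(\Omega)$, because the Gagliardo term of the modular is not monotone under pointwise domination: for example $g\equiv 1$ on a bounded $\Omega$ has zero Gagliardo seminorm, while a smooth rapidly oscillating $f$ with $0\le f\le 1$ can have arbitrarily large seminorm, so $|f|\le|g|$ yet $\|f\|\gg\|g\|$. The phrase ``Banach lattice'' in the paper (and in \cite{BA3}) must be read as ``Banach space that is a vector lattice,'' meaning $u\wedge v$ and $u\vee v$ remain in the space; it does \emph{not} mean the Luxemburg norm is order-compatible in the strict Banach-lattice sense. Your closing remark correctly anticipates the repair: the convergence $v_n\to v$ has to come from a modular estimate in the spirit of Lemma~\ref{inegualite modulaire}, or from norm-\emph{continuity} (as opposed to Lipschitz-$1$ continuity) of lattice operations on $\mathcal{W}^{s,q(.),p(.,.)}(\Omega)$ — which is what the paper implicitly relies on when it asserts $\min\{v,\varphi_n\}\to\min\{v,u\}$. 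As written, your chain of norm inequalities breaks at precisely the step you flagged as delicate, so the proof is not yet complete.
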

\begin{proof}
	It is clear that $\mathcal{W}^{s,q(.),p(.,.)}_{0}(\Omega)$ is a closed Banach subspace of $\mathcal{W}^{s,q(.),p(.,.)}(\Omega)$. Let $u\in \mathcal{W}^{s,q(.),p(.,.)}_{0}(\Omega),\ v\in \mathcal{W}^{s,q(.),p(.,.)}(\Omega)$, $0 \leqslant\left|v \right| \leqslant \left|u \right|$ a.e. Let 
	$\varphi_{n}\in \mathscr{C}^{\infty}_{0}(\Omega)$ such that $\varphi_{n}$ converges to $u$ in $\mathcal{W}^{s,q(.),p(.,.)}_{0}(\Omega)$. Then $ v_{n}:=\min\left\lbrace v, \varphi_{n}, \right\rbrace $ has compact support and belongs to $\mathcal{W}^{s,q(.),p(.,.)}(\Omega)$. Thus $v_{n}\in \mathcal{W}^{s,q(.),p(.,.)}_{0}(\Omega)$. Moreover, since $ v_{n}\longrightarrow \min\left\lbrace v,u \right\rbrace =v$ in $\mathcal{W}^{s,q(.),p(.,.)}(\Omega)$, we have that $v\in \mathcal{W}^{s,q(.),p(.,.)}_{0}(\Omega)$.
\end{proof}  
We denote by $ \mathcal{{H}}^{s,q(.),p(.,.)}_{0}(\Omega) $ the closure of $\mathscr{C}_{0}^{\infty}(\Omega)$ in 
$\mathcal{W}^{s,q(.),p(.,.)}(\Omega)$. Recall that $ \mathcal{\tilde{{W}}}^{s,q(.),p(.,.)}(\Omega) $ is the closure of $\mathcal{W}^{s,q(.),p(.,.)}(\Omega)\cap \mathscr{C}_{c}(\overline{\Omega})$ in the space $ \mathcal{{W}}^{s,q(.),p(.,.)}(\Omega)$. It easy to show that $ \mathcal{\tilde{{W}}}^{s,q(.),p(.,.)}(\Omega) $ is a proper closed subspace of $ \mathcal{{W}}^{s,q(.),p(.,.)}(\Omega)$, and hence it is a separable, reflexive and uniformly convex Banach space. 

By definition, $ \mathcal{{H}}^{s,q(.),p(.,.)}_{0}(\Omega) $ is the smaller closed subspace of $ \mathcal{W}^{s,q(.),p(.,.)}(\Omega) $ containing $\mathscr{C}_{0}^{\infty}(\Omega)$. Moreover, $ \mathcal{\tilde{{W}}}^{s,q(.),p(.,.)}(\Omega) $ contains $ \mathcal{{H}}^{s,q(.),p(.,.)}_{0}(\Omega) $ as a closed subspace. Hence $ \mathcal{{H}}^{s,q(.),p(.,.)}_{0}(\Omega) $ is a Banach space and we have the following inclusion:
\begin{equation*}
\mathcal{{H}}^{s,q(.),p(.,.)}_{0}(\Omega)\subset \mathcal{{W}}^{s,q(.),p(.,.)}_{0}(\Omega)\subset \mathcal{{W}}^{s,q(.),p(.,.)}(\Omega).
\end{equation*}   

 The following theorem shows that the spaces $\mathcal{{H}}^{s,q(.),p(.,.)}_{0}(\Omega)$, $ \mathcal{\tilde{{W}}}^{s,q(.),p(.,.)}(\Omega) $ and $ \mathcal{W}^{s,q(.),p(.,.)}_{0}(\Omega) $ may coincide if smooth functions are dense in the fractional Sobolev space $ \mathcal{W}^{s,q(.),p(.,.)}(\Omega)$.
 \begin{theorem}\label{density condition}
 	Let $s\in(0,1), q\in \mathcal{P}(\Omega)$ and 
 	$p\in\mathcal{P}(\Omega\times\Omega)$. Assume that $ \mathcal{W}^{s,q(.),p(.,.)}(\Omega)\cap \mathscr{C}^{\infty}(\overline{\Omega})$ is dense in 
 	$ \mathcal{W}^{s,q(.),p(.,.)}(\Omega)$. 
 	Then
 	\begin{equation*}
 	\mathcal{{H}}^{s,q(.),p(.,.)}_{0}(\Omega)= \mathcal{W}^{s,q(.),p(.,.)}_{0}(\Omega)=\mathcal{\tilde{{W}}}^{s,q(.),p(.,.)}(\Omega).
 	\end{equation*}  
 \end{theorem}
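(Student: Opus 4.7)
The plan is to close the chain of inclusions $\mathcal{H}^{s,q(.),p(.,.)}_{0}(\Omega)\subseteq \mathcal{W}^{s,q(.),p(.,.)}_{0}(\Omega)$ and $\mathcal{H}^{s,q(.),p(.,.)}_{0}(\Omega)\subseteq\tilde{\mathcal{W}}^{s,q(.),p(.,.)}(\Omega)$ already recorded in the excerpt, by establishing the two reverse inclusions $\mathcal{W}^{s,q(.),p(.,.)}_{0}(\Omega) \subseteq \mathcal{H}^{s,q(.),p(.,.)}_{0}(\Omega)$ and $\tilde{\mathcal{W}}^{s,q(.),p(.,.)}(\Omega) \subseteq \mathcal{W}^{s,q(.),p(.,.)}_{0}(\Omega)$ under the density hypothesis on $\mathcal{W}^{s,q(.),p(.,.)}(\Omega)\cap \mathscr{C}^{\infty}(\overline{\Omega})$.

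For the first reverse inclusion I would mollify. Take $u$ in the generating set of $\mathcal{W}^{s,q(.),p(.,.)}_{0}(\Omega)$, namely with compact support $K\subset\Omega$, extend by zero to $\mathbb{R}^{n}$, and set $u_{\varepsilon}:=\rho_{\varepsilon}*u$ for a standard radial mollifier with support in $B(0,\varepsilon)$. For $\varepsilon<\textnormal{dist}(K,\partial\Omega)$ one has $u_{\varepsilon}\in \mathscr{C}^{\infty}_{0}(\Omega)$. The density assumption, combined with the standard modular convergence theorems for variable exponent Lebesgue and Gagliardo spaces (using, e.g., the unit-ball property of \cite[Lemma 2.1.14]{Hasto}), then forces $u_{\varepsilon}\to u$ in the $\mathcal{W}^{s,q(.),p(.,.)}(\Omega)$-norm, placing $u$ in $\mathcal{H}^{s,q(.),p(.,.)}_{0}(\Omega)$. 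Passing to a limit over the defining sequence of $\mathcal{W}^{s,q(.),p(.,.)}_{0}(\Omega)$ closes this direction.

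For the second reverse inclusion I would truncate by boundary cutoffs. Fix $u\in\tilde{\mathcal{W}}^{s,q(.),p(.,.)}(\Omega)$ and an approximating sequence $u_{k}\in \mathcal{W}^{s,q(.),p(.,.)}(\Omega)\cap \mathscr{C}_{c}(\overline{\Omega})$, and choose smooth cutoffs $\eta_{j}\in \mathscr{C}^{\infty}_{0}(\Omega)$ with $\eta_{j}=1$ on $\{x\in\Omega:\textnormal{dist}(x,\partial\Omega)\geq 2/j\}$ and $\eta_{j}=0$ on $\{x\in\Omega:\textnormal{dist}(x,\partial\Omega)\leq 1/j\}$. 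Then each $\eta_{j} u_{k}$ has compact support in $\Omega$ and so belongs to the generating set of $\mathcal{W}^{s,q(.),p(.,.)}_{0}(\Omega)$. A diagonal argument $k,j\to\infty$ will put $u$ into $\mathcal{W}^{s,q(.),p(.,.)}_{0}(\Omega)$ provided one establishes, for each fixed $k$, that $\eta_{j} u_{k}\to u_{k}$ in $\mathcal{W}^{s,q(.),p(.,.)}(\Omega)$ as $j\to\infty$.

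The principal obstacle is precisely this last convergence. The $L^{q(.)}$-contribution vanishes by dominated convergence on the shrinking boundary strip $S_{j}:=\{x\in\Omega:\textnormal{dist}(x,\partial\Omega)< 2/j\}$, but the Gagliardo modular of $r_{j}:=(1-\eta_{j})u_{k}$ must be controlled across the decomposition $\Omega\times\Omega=(S_{j}\times S_{j})\cup(S_{j}\times(\Omega\setminus S_{j}))\cup((\Omega\setminus S_{j})\times S_{j})\cup((\Omega\setminus S_{j})\times(\Omega\setminus S_{j}))$. The last piece is identically zero, the diagonal piece vanishes by absolute continuity of the Gagliardo modular of $u_{k}$ on shrinking sets, while the two off-diagonal pieces demand a uniform-in-$j$ estimate of $|r_{j}(x)|^{p(x,y)}|x-y|^{-n-sp(x,y)}$, typically obtained from a fractional Hardy-type inequality in the boundary strip together with the uniform continuity of $u_{k}$ up to $\partial\Omega$ and the finiteness of $[u_{k}]^{s,p(.,.)}(\Omega)$. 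Transferring such Hardy estimates to the variable-exponent fractional setting is where the bulk of the technical effort would lie.
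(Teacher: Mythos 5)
Your mollification step for $\mathcal{W}^{s,q(.),p(.,.)}_{0}(\Omega)\subseteq\mathcal{H}^{s,q(.),p(.,.)}_{0}(\Omega)$ does not go through under the stated hypotheses. The claim that $u_{\varepsilon}:=\rho_{\varepsilon}*u\to u$ in the $\mathcal{W}^{s,q(.),p(.,.)}(\Omega)$-norm is \emph{not} forced by the density assumption. In variable-exponent spaces, convolution with a mollifier is neither automatically bounded nor automatically convergent on the Lebesgue modular, let alone on the Gagliardo modular; such convergence typically requires log-H\"older continuity of $q$ and $p$, a hypothesis the theorem deliberately omits. The density hypothesis produces \emph{some} sequence in $\mathcal{W}^{s,q(.),p(.,.)}(\Omega)\cap\mathscr{C}^{\infty}(\overline{\Omega})$ converging to $u$, but gives no control over the specific mollified family $(u_{\varepsilon})$. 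The paper avoids this issue entirely: since $u=u\chi_{K}$ for a compact $K\subset\Omega$, one fixes a single cutoff $\varphi\in\mathscr{C}^{\infty}_{0}(\Omega)$ with $\varphi\equiv 1$ on $K$, takes the approximants $u_{i}$ supplied by the density hypothesis, and shows $\varphi u_{i}\to u$ by a triangle inequality splitting $u_{i}-\varphi u_{i}$ over $K$ and $\Omega\setminus K$; this stays strictly within what the hypothesis delivers, and $\varphi u_{i}\in\mathscr{C}^{\infty}_{0}(\Omega)$ automatically.

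Your second step, $\tilde{\mathcal{W}}^{s,q(.),p(.,.)}(\Omega)\subseteq\mathcal{W}^{s,q(.),p(.,.)}_{0}(\Omega)$ via boundary cutoffs $\eta_{j}$, runs into the same kind of wall, which you correctly identify: the off-diagonal Gagliardo contributions across the strip $S_{j}$ would demand a fractional Hardy-type inequality, and no such inequality is available under the present hypotheses (it would require geometric regularity of $\partial\Omega$ and/or exponent regularity not assumed here). The paper does not attempt this route; it records $\mathcal{W}^{s,q(.),p(.,.)}_{0}(\Omega)=\tilde{\mathcal{W}}^{s,q(.),p(.,.)}(\Omega)$ as an immediate consequence of the density hypothesis and devotes the proof to the single nontrivial inclusion $\mathcal{W}^{s,q(.),p(.,.)}_{0}(\Omega)\subseteq\mathcal{H}^{s,q(.),p(.,.)}_{0}(\Omega)$. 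In short, your chain of inclusions has the right logical shape, but both reverse inclusions as you propose them rest on machinery (mollification convergence, fractional Hardy estimates) that the theorem's hypotheses do not provide, whereas the paper uses only the density hypothesis directly through a fixed cutoff.
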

 \begin{proof}
 	First, it is clear that if $ \mathcal{W}^{s,q(.),p(.,.)}(\Omega)\cap \mathscr{C}^{\infty}(\overline{\Omega})$ is dense in 
 	$ \mathcal{W}^{s,q(.),p(.,.)}(\Omega)$ then $ \mathcal{W}^{s,q(.),p(.,.)}_{0}(\Omega)=\mathcal{\tilde{{W}}}^{s,q(.),p(.,.)}(\Omega)$. Next, we show that 
 	\begin{equation*}
 	 \mathcal{{H}}^{s,q(.),p(.,.)}_{0}(\Omega)= \mathcal{W}^{s,q(.),p(.,.)}_{0}(\Omega).   
 	\end{equation*}
 	Clearly $\mathcal{{H}}^{s,q(.),p(.,.)}_{0}(\Omega)\subset \mathcal{W}^{s,q(.),p(.,.)}_{0}(\Omega)$. To show the other inclusion, let $ u\in \mathcal{{W}}^{s,q(.),p(.,.)}(\Omega) $ and $ K $ be a compact subset of $\Omega$  such that $u=u\chi_{K}$ almost everywhere. Let $ \varphi \in \mathscr{C}^{\infty}_{0}(\Omega)$ be such that $ 0\leqslant\varphi\leqslant1 $ and $ \varphi=1 $ in $ K $. There exists a sequence $ (u_{i})\subset \mathcal{W}^{s,q(.),p(.,.)}(\Omega)\cap \mathscr{C}^{\infty}(\overline{\Omega})$ converging to $ u$ in 
 	$\mathcal{W}^{s,q(.),p(.,.)}_{0}(\Omega)$ and 
 	\begin{eqnarray*}
 	\left\|u-\varphi u_{i} \right\|_{\mathcal{W}^{s,q(.),p(.,.)}(\Omega)}&\leq&\left\|u- u_{i} \right\|_{\mathcal{W}^{s,q(.),p(.,.)}(\Omega)}+\left\|u_{i}-\varphi u_{i} \right\|_{\mathcal{W}^{s,q(.),p(.,.)}(\Omega)}\\
 	 &\leq& \left\|u- u_{i} \right\|_{\mathcal{W}^{s,q(.),p(.,.)}(\Omega)}+\left\|u_{i}-\varphi u_{i} \right\|_{\mathcal{W}^{s,q(.),p(.,.)}(K)}\\
 	 &+&\left\|u_{i}-\varphi u_{i} \right\|_{\mathcal{W}^{s,q(.),p(.,.)}(\Omega\setminus K)}. 	
 	\end{eqnarray*} 
 Since $u_{i}$ converge to $ u $ as $ i $ tends to infinity and $ \varphi=1 $ in $ K $. We get that
 \begin{eqnarray*}
 	\left\|u-\varphi u_{i} \right\|_{\mathcal{W}^{s,q(.),p(.,.)}(\Omega)}&\leq&\left\|u- u_{i} \right\|_{\mathcal{W}^{s,q(.),p(.,.)}(\Omega)}+\left\|u_{i}-\varphi u_{i} \right\|_{\mathcal{W}^{s,q(.),p(.,.)}(\Omega)}\\
 	&\leq& \left\|u- u_{i} \right\|_{\mathcal{W}^{s,q(.),p(.,.)}(\Omega)}+\left\|u_{i}-\varphi u_{i} \right\|_{\mathcal{W}^{s,q(.),p(.,.)}(K)}\\
 	&+&\left\|u_{i}-\varphi u_{i} \right\|_{\mathcal{W}^{s,q(.),p(.,.)}(\Omega\setminus K)}. 	
 \end{eqnarray*} 
Now,by definition of $\mathcal{W}^{s,q(.),p(.,.)}_{0}(\Omega)$ for each function $u\in \mathcal{W}^{s,q(.),p(.,.)}_{0}(\Omega)$, we find a sequence in $\mathscr{C}^{\infty}_{0}(\Omega)$ converging to $ u $. Hence $ 	\left\|u-\varphi u_{i} \right\|_{\mathcal{W}^{s,q(.),p(.,.)}(\Omega)} \longrightarrow 0$ as $ i $ tends to infinity. Therefore $ \varphi u_{i}$ converges to $ u $ in $\mathcal{W}^{s,q(.),p(.,.)}(\Omega)$ as $ i $ tends to infinity. Thus we obtain $ \mathcal{W}^{s,q(.),p(.,.)}_{0}(\Omega) \subset\mathcal{{H}}^{s,q(.),p(.,.)}_{0}(\Omega)$.     
\end{proof}  
In connection with the density problem for smooth function, we introduce the most important condition on the exponent in the study of variable exponent spaces, the well-known log-H\"{o}lder continuity condition introduced by Zhikov in \cite{Zhikov}. We say that a function $ q:\Omega \rightarrow \mathbb{R}$ is log-H\"{o}lder continuous on $\Omega$ if there exists $C>0$ such that    
\begin{eqnarray*}
	\left| q(x)-q(y) \right|\leqslant \frac{C}{-\log \left|x-y \right| },\  \left|x-y \right|\leqslant \frac{1}{2}. 
\end{eqnarray*}
In \cite{BA1} Baalal and Berghout generalize this log-H\"{o}lder continuity condition for variable exponent $p\in\mathcal{P}(\Omega\times\Omega)$. We say that a function $ p:\Omega\times\Omega \rightarrow \mathbb{R}$ satisfies condition (B-B) on $\Omega\times\Omega$ if there exists $C>0$ such that    
\begin{eqnarray*}
	\left| p(x,y)-p(x^{'},y^{'}) \right|\leqslant \frac{C}{-\log (\left|x-x^{'} \right|+\left|y-y^{'} \right| ) },\  \left|x-x^{'} \right|+\left|y-y^{'} \right| \leqslant \frac{1}{2}. 
\end{eqnarray*} 
We define the following class of variable exponents 
\begin{eqnarray*}
	\mathcal{P}^{log}(\Omega):=\left\lbrace q:\Omega \rightarrow \mathbb{R}: q\  \textnormal{is measurable and log-H\"{o}lder continuous }\right\rbrace 
\end{eqnarray*}
and 
\begin{eqnarray*}
	\mathcal{P}^{log}(\Omega\times\Omega):=\left\lbrace p:\Omega\times\Omega \rightarrow \mathbb{R}: p\  \textnormal{is measurable and satisfies condition (B-B)}\right\rbrace. 
\end{eqnarray*}
We say that $ \Omega\subset \mathbb{R}^{{n}} $ is a $ \mathcal{W}^{s,q(.),p(.,.)}$-extension domain if there exists a continuous linear extension operator 
\begin{eqnarray*}
	E: \mathcal{W}^{s,q(.),p(.,.)}( \Omega ) \longrightarrow \mathcal{W}^{s,q(.),p(.,.)}(\mathbb{R}^{n})
\end{eqnarray*}
such that $ E u|_{\varOmega}=u$ for each $ u\in \mathcal{W}^{s,q(.),p(.,.)}(\Omega)$. A typical example of $ \mathcal{W}^{s,q(.),p(.,.)}$-extension domain is domains with Lipschitz boundary; see \cite{BA}.

Combining the Theorem \ref{density condition} and \cite[Theorem 3.3] {BA1}, we obtain the following corollary.
\begin{corollary}\label{ p log condition density}
Let $s\in(0,1), q\in\mathcal{P}^{log}(\Omega)$ and 
$p\in\mathcal{P}^{log}(\Omega\times\Omega)$. Assume that $ \Omega $ is a $\mathcal{W}^{s,q(.),p(.,.)}(\Omega)$-extension domain. Then
\begin{equation*}
\mathcal{{H}}^{s,q(.),p(.,.)}_{0}(\Omega)= \mathcal{W}^{s,q(.),p(.,.)}_{0}(\Omega)=\mathcal{\tilde{{W}}}^{s,q(.),p(.,.)}(\Omega).
\end{equation*}    
\end{corollary}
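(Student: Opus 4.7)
The plan is to deduce the corollary directly by combining the two ingredients it explicitly invokes: the density criterion of Theorem \ref{density condition} and the smooth approximation result \cite[Theorem 3.3]{BA1}. Since Theorem \ref{density condition} already provides the implication we want once density of smooth functions up to the boundary is known, the only real content is to verify that the hypotheses of the corollary put us in a position to apply that density theorem.

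First, I would unwind the hypotheses. We are given $q\in\mathcal{P}^{log}(\Omega)$, $p\in\mathcal{P}^{log}(\Omega\times\Omega)$ and that $\Omega$ is a $\mathcal{W}^{s,q(.),p(.,.)}$-extension domain, so that we have at our disposal a bounded linear operator $E:\mathcal{W}^{s,q(.),p(.,.)}(\Omega)\longrightarrow \mathcal{W}^{s,q(.),p(.,.)}(\mathbb{R}^{n})$ with $Eu|_{\Omega}=u$. The role of the log-Hölder conditions on $q$ and $p$, together with the extension property, is precisely the set of hypotheses under which \cite[Theorem 3.3]{BA1} is proved; that theorem asserts that $\mathscr{C}^{\infty}(\overline{\Omega})\cap \mathcal{W}^{s,q(.),p(.,.)}(\Omega)$ is dense in $\mathcal{W}^{s,q(.),p(.,.)}(\Omega)$. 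I would quote this verbatim as the one nontrivial input.

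Once that density is in hand, Theorem \ref{density condition} applies as a black box and yields
\begin{equation*}
\mathcal{{H}}^{s,q(.),p(.,.)}_{0}(\Omega)= \mathcal{W}^{s,q(.),p(.,.)}_{0}(\Omega)=\mathcal{\tilde{{W}}}^{s,q(.),p(.,.)}(\Omega),
\end{equation*}
which is exactly the conclusion sought. The proof is therefore a two-line deduction: check the hypotheses of \cite[Theorem 3.3]{BA1} to get the density, then invoke Theorem \ref{density condition}.

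The only subtle point, and what I would single out as the main thing to be careful about, is the compatibility of the two cited statements: one must be sure that the notion of density of $\mathscr{C}^{\infty}(\overline{\Omega})\cap \mathcal{W}^{s,q(.),p(.,.)}(\Omega)$ produced by \cite[Theorem 3.3]{BA1} is the same one required in the hypothesis of Theorem \ref{density condition} (same function space, same norm, same meaning of $\mathscr{C}^{\infty}(\overline{\Omega})$). Since the earlier discussion fixes these notations once and for all, I would simply point this out and not belabor it; no further calculation is needed.
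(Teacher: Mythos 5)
Your proposal is correct and matches the paper's intent exactly: the corollary is stated in the paper with no separate proof, preceded only by the remark that it follows by combining Theorem \ref{density condition} with \cite[Theorem 3.3]{BA1}, which is precisely your two-line deduction.
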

Notice that by previous study the class of smooth functions either can be (Theorem \ref{density condition} and Corollary 
\ref{ p log condition density}) or does not have to be dense in $ \mathcal{W}^{s,q(.),p(.,.)}(\Omega) $ depending on the variable exponents $ q$ and $ p $. Hence the closure of $\mathscr{C}_{0}^{\infty}(\Omega)$ under the fractional Sobolev norm is not a best way to define fractional variable exponents Sobolev zero trace spaces in every case.

 In view of potential theory (Theorem \ref{quasicontinuous representative}) a Sobolev function $ u $ in $\tilde{\mathcal{W}}^{s,q(.),p(.,.)}(\Omega)$ has a distinguished representative which is defined up to a set of $ (s,q(.),p(.,.))$-relative capacity zero. Therefore it is possible to look at traces of Sobolev functions on the boundary of the set of definition. Moreover, if $ E \subset \mathbb{R}^{n} $ with $C_{q(.),p(.,.)}^{s,\Omega}(E)>0$, then the trace of $ u $ to $ E $ is the restriction to $ E $ of any $(s,q(.),p(.,.))$-relatively quasicontinuous representative of $ u $.
 
 In the sequel, we give a characterization of $\mathcal{{H}}^{s,q(.),p(.,.)}_{0}(\Omega)$
and a necessary and sufficient condition in term of the $ (s,q(.),p(.,.))$-relative capacity for the equality
$\mathcal{{H}}^{s,q(.),p(.,.)}_{0}(\Omega)=\mathcal{\tilde{{W}}}^{s,q(.),p(.,.)}(\Omega)$ to be true. Let us beginning by the following lemma.
\begin{lemma}\label{limite nulle sur la frontiere}
	Let $\Omega \subset \mathbb{R}^{n}$ be a bounded open set and $ u \in \tilde{\mathcal{W}}^{s,q(.),p(.,.)}(\Omega)$. Let $\tilde{u}$ be the $(s,q(.),p(.,.))$-relatively quasicontinuous representative of $ u $. Assume that $ \lim_{\Omega\ni x\longrightarrow z} \tilde{u}(x)=0$ for all $ z \in\partial\Omega$. Then $ u \in \mathcal{H}^{s,q(.),p(.,.)}_{0}(\Omega)$.
\end{lemma}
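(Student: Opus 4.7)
The plan is to produce an explicit approximation of $u$ by compactly supported functions in $\Omega$ and then by $\mathscr{C}^{\infty}_{0}(\Omega)$ functions. For each $k\in\mathbb{N}$ define the truncation
\[
u_{k} := \operatorname{sgn}(\tilde{u})\,(|\tilde{u}|-1/k)^{+}.
\]
Because the composition of $\tilde{u}\in\tilde{\mathcal{W}}^{s,q(.),p(.,.)}(\Omega)$ with the $1$-Lipschitz truncation lives in $\tilde{\mathcal{W}}^{s,q(.),p(.,.)}(\Omega)$, each $u_{k}$ is again in this space.

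The first step is to show $\operatorname{supp}(u_{k})$ is a compact subset of $\Omega$. Since $\Omega$ is bounded, $\partial\Omega$ is compact; the hypothesis supplies, for each $z\in\partial\Omega$, a radius $r_{z}>0$ with $|\tilde{u}|<1/k$ on $B(z,r_{z})\cap\Omega$. A finite subcover yields an open neighborhood $U\supset\partial\Omega$ with $|\tilde{u}|<1/k$ on $U\cap\Omega$, so $\{|\tilde{u}|\geq 1/k\}\cap\Omega\subset\overline{\Omega}\setminus U$ is compactly contained in $\Omega$. Hence $u_{k}$ vanishes outside this compact set.

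The second step is to verify $u_{k}\to u$ in $\mathcal{W}^{s,q(.),p(.,.)}(\Omega)$. Pointwise $|u-u_{k}|\leq 1/k$ and $|u-u_{k}|\leq |u|$, while the $1$-Lipschitz property of $t\mapsto\operatorname{sgn}(t)(|t|-1/k)^{+}$ gives
\[
\bigl|(u-u_{k})(x)-(u-u_{k})(y)\bigr|\leq 2\,|\tilde{u}(x)-\tilde{u}(y)|,
\]
so that
\[
\frac{\bigl|(u-u_{k})(x)-(u-u_{k})(y)\bigr|^{p(x,y)}}{|x-y|^{n+sp(x,y)}}\leq 2^{p^{+}}\,\frac{|\tilde{u}(x)-\tilde{u}(y)|^{p(x,y)}}{|x-y|^{n+sp(x,y)}},
\]
which is integrable on $\Omega\times\Omega$ because $u\in\mathcal{W}^{s,q(.),p(.,.)}(\Omega)$. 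Dominated convergence on both $\Omega$ and $\Omega\times\Omega$ then gives $\rho^{s,\Omega}_{q(.),p(.,.)}(u-u_{k})\to 0$, and therefore convergence in the equivalent norm.

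The third step is to locate each $u_{k}$ inside $\mathcal{H}^{s,q(.),p(.,.)}_{0}(\Omega)$. Extend $u_{k}$ by zero to $\mathbb{R}^{n}$; for $\varepsilon<\operatorname{dist}(\operatorname{supp}(u_{k}),\partial\Omega)$ the mollification $u_{k}\ast\varphi_{\varepsilon}$ lies in $\mathscr{C}^{\infty}_{0}(\Omega)$. A standard convolution argument (applied to a compactly supported function, so that the mollifications remain supported in one fixed compact subset of $\Omega$ where the exponents are essentially bounded) shows $u_{k}\ast\varphi_{\varepsilon}\to u_{k}$ in $\mathcal{W}^{s,q(.),p(.,.)}(\Omega)$ as $\varepsilon\to 0$. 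Thus $u_{k}\in\mathcal{H}^{s,q(.),p(.,.)}_{0}(\Omega)$, and closedness of this subspace combined with $u_{k}\to u$ in norm forces $u\in\mathcal{H}^{s,q(.),p(.,.)}_{0}(\Omega)$.

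I expect the second step, the dominated-convergence argument on the Gagliardo double integral with a variable exponent $p(x,y)$, to be the most delicate point: one must both obtain a uniform integrable majorant and correctly pass the limit through the variable-exponent density. The mollification step in the third part also requires attention in variable exponent spaces, but the fact that each $u_{k}$ is compactly supported inside $\Omega$ localizes the issue and avoids the need for any global regularity hypothesis on $q$ and $p$.
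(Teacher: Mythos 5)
Your proof follows the same overall strategy as the paper's: truncate $\tilde{u}$ at a small level, use the pointwise boundary hypothesis together with compactness of $\partial\Omega$ to show each truncation has compact support in $\Omega$, prove the truncations converge to $u$, and finish by closedness of $\mathcal{H}^{s,q(.),p(.,.)}_{0}(\Omega)$. You are slightly cleaner in two places: you handle the sign of $u$ directly via $\operatorname{sgn}(\tilde{u})(|\tilde{u}|-1/k)^{+}$ rather than the paper's ``without loss of generality $u\geq 0$'' reduction (the paper's justification, $u=u^{+}+u^{-}$, is a typo for $u=u^{+}-u^{-}$), and you make the dominated-convergence argument explicit with a uniform majorant $2^{p^{+}}|\tilde{u}(x)-\tilde{u}(y)|^{p(x,y)}/|x-y|^{n+sp(x,y)}$, whereas the paper just states the convergence.

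The one place where you genuinely diverge is the last step, and it is also where I would push back. The paper asserts, with no justification, that $u_{i}$ with compact support in $\Omega$ already belongs to $\mathcal{H}^{s,q(.),p(.,.)}_{0}(\Omega)$. You try to supply the missing argument via mollification, but this is not as routine as you suggest. On a bounded domain the $L^{q(.)}$-part of $\rho^{s,\Omega}_{q(.),p(.,.)}(u_{k}\ast\varphi_{\varepsilon}-u_{k})$ is harmless, but the Gagliardo term is delicate: the usual proof that the seminorm of a mollification is controlled by the seminorm of the original function relies on Jensen's inequality together with translation invariance of the exponent, which fails for a genuine $p(x,y)$. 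This is precisely the obstruction behind the log-H\"older (respectively (B-B)) hypotheses the paper invokes for its density results (Theorem~\ref{density condition} and Corollary~\ref{ p log condition density}), and it is not fixed merely by knowing the support is compact -- the difficulty is local in $(x,y)$, not a boundary issue, so your parenthetical remark that compact support ``avoids the need for any global regularity hypothesis on $q$ and $p$'' is not correct. To repair this step one would either have to invoke whatever result the paper is implicitly citing from \cite{BA3} for compactly supported members of $\tilde{\mathcal{W}}^{s,q(.),p(.,.)}(\Omega)$, or exploit the defining approximation of $\tilde{\mathcal{W}}^{s,q(.),p(.,.)}(\Omega)$ by functions in $\mathscr{C}_{c}(\overline{\Omega})$ and multiply by a smooth cut-off before attempting any smoothing. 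As written, the step is a genuine gap in your argument (one the paper also does not close, so you are at least flagging where the work lies).
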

\begin{proof}
	Let $ u \in \tilde{\mathcal{W}}^{s,q(.),p(.,.)}(\Omega)$. Recalling that $ u=u^{+}+u^{-} $, then without lost of generality, we can assume that $ u $ is nonnegative. Let $\tilde{u}$ be the $(s,q(.),p(.,.))$-relatively quasicontinuous representative of $ u $ with $ \lim_{\Omega\ni x\longrightarrow z} \tilde{u}(x)=0$ for all $ z \in\partial\Omega$. Then 
	\begin{equation*}
	(\forall z \in\partial\Omega)(\exists \varepsilon >0): \ 0\leq  \tilde{u}(x)\leq \dfrac{1}{2^{i}},
	\end{equation*}
	for all $ x\in B(z,\varepsilon)\cap \Omega $ and $ i\in \mathbb{N}$.\\
	Since $\partial\Omega$ is compact, there exist $ z_{1},\ldots, z_{j}\in \partial\Omega $ such that 
	\begin{equation*}
	\partial\Omega\subset\bigcup_{i=1}^{j}B(z_{i},\varepsilon_{z_{i}}).
	\end{equation*}
	By \cite[Lemma 2 ]{BA3}, $ u_{i}:=(u-\dfrac{1}{2^{i}})^{+}\in \tilde{\mathcal{W}}^{s,q(.),p(.,.)}(\Omega)$. Moreover, $ u_{i}=0 $ outside $\overline{\Omega}\setminus \bigcup_{i=1}^{j}B(z_{i},\varepsilon_{z_{i}})$. Hence $u_{i} \in \mathcal{H}^{s,q(.),p(.,.)}_{0}(\Omega)$. Since $ u_{i} $ converges to $ u $ in ${\mathcal{W}}^{s,q(.),p(.,.)}(\Omega)$ as $ i $ tends to infinity, we deduce that $ u \in \mathcal{H}^{s,q(.),p(.,.)}_{0}(\Omega)$.
\end{proof}
Notice that for a measurable set $ A\subset \overline{\Omega}$, the set 
\begin{equation*}
\mathcal{K}_{A}:=\left\lbrace u\in \tilde{\mathcal{W}}^{s,q(.),p(.,.)}(\Omega):u\geq 1 \ \text{a.e. on} \ A   \right\rbrace 
\end{equation*}
is a non-empty, closed convex subset of the uniform convex Banach space $\tilde{\mathcal{W}}^{s,q(.),p(.,.)}(\Omega)$. Hence, we deduce that there exist a unique admissible function $ \mathscr{E}_{A}$ in $ \mathcal{K}_{A} $ such that
\begin{equation*}
C_{q(.),p(.,.)}^{s,\Omega}(A)=\rho_{q(.),p(.,.)}^{s,\Omega}(\mathscr{E}_{A}).
\end{equation*}
This fact motivate the following definition.
\begin{definition}
	Let $ A\subset \overline{\Omega}$ be a measurable set. A function $\mathscr{E}_{A}\in\tilde{\mathcal{W}}^{s,q(.),p(.,.)}(\Omega)$ is called a relative $(s,q(.),p(.,.))$-equilibrium potential of $ A $ if the following properties hold.
	\begin{enumerate}
		\item $ 0\leqslant \mathscr{E}_{A} \leqslant 1$ a.e. on $ \Omega $ ;
		\item $ \tilde{\mathscr{E}}_{A}=1 $ $ (s,q(.),p(.,.))$-r.q.e. on $ A $, where $\tilde{\mathscr{E}}_{A}$ denotes the $(s,q(.),p(.,.))$-relatively quasicontinuous representative of $\mathscr{E}_{A}$;
		\item 
		$C_{q(.),p(.,.)}^{s,\Omega}(A)=\rho_{q(.),p(.,.)}^{s,\Omega}( \mathscr{E}_{A})$. 
	\end{enumerate} 
\end{definition} 
\begin{theorem}\label{inclusion for bounded set}
	Let $\Omega\subset \mathbb{R}^{n}$ be a bounded open set, $s\in(0,1)$, $q\in \mathcal{P}(\Omega)$ and 
	$p\in\mathcal{P}(\Omega\times\Omega)$. We set 
	\begin{eqnarray*}
		\mathcal{W}(\Omega):=\left\lbrace u\in \tilde{\mathcal{W}}^{s,q(.),p(.,.)}(\Omega):\ \tilde{u}=0 \ (s,q(.),p(.,.))\textnormal{-r.q.e. on} \ \partial\Omega   \right\rbrace, 	
	\end{eqnarray*} 
	where $\tilde{u}$ denotes the $(s,q(.),p(.,.))$-relatively quasicontinuous representative of $u$. Then
	\begin{equation*}
	\mathcal{W}(\Omega)\cap L^{\infty}(\Omega)\subset \mathcal{H}^{s,q(.),p(.,.)}_{0}(\Omega).	
	\end{equation*}
\end{theorem}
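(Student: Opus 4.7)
\emph{Proof plan.} The plan is to approximate any $u \in \mathcal{W}(\Omega) \cap L^{\infty}(\Omega)$ in $\mathcal{W}^{s,q(.),p(.,.)}(\Omega)$-norm by functions in $\mathcal{H}^{s,q(.),p(.,.)}_{0}(\Omega)$ which each satisfy the pointwise boundary-limit hypothesis of Lemma \ref{limite nulle sur la frontiere}, and to conclude via the closedness of $\mathcal{H}^{s,q(.),p(.,.)}_{0}(\Omega)$. Writing $u=u^{+}-u^{-}$ and using the Banach lattice structure of $\tilde{\mathcal{W}}^{s,q(.),p(.,.)}(\Omega)$, I may assume $u\ge 0$, set $M:=\|u\|_{\infty}$, and (replacing the r.q.c.\ representative by $(\tilde u\vee 0)\wedge M$) arrange $0\le\tilde u\le M$ everywhere on $\overline{\Omega}$. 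The hypothesis then reads $\tilde u\equiv 0$ on $\partial\Omega\setminus P$ for some relatively polar $P\subset\partial\Omega$.

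Fix $\varepsilon>0$. Combining Proposition \ref{strong subbaditivity de capacity} with the definition of $C^{s,\Omega}_{q(.),p(.,.)}$ and the quasicontinuity of $\tilde u$, I choose a relatively open set $O_{\varepsilon}\supset P$ in $\overline{\Omega}$ with $C^{s,\Omega}_{q(.),p(.,.)}(O_{\varepsilon})<\varepsilon$ such that $\tilde u$ is continuous on $\overline{\Omega}\setminus O_{\varepsilon}$. Let $\mathscr{E}_{\varepsilon}$ be the relative $(s,q(.),p(.,.))$-equilibrium potential of $O_{\varepsilon}$; modifying its r.q.c.\ representative on a polar set (possible since the defect set $O_{\varepsilon}\cap\{\tilde{\mathscr{E}}_{\varepsilon}\neq 1\}$ is polar by definition of the equilibrium potential), I assume $0\le\tilde{\mathscr{E}}_{\varepsilon}\le 1$ everywhere and $\tilde{\mathscr{E}}_{\varepsilon}\equiv 1$ on $O_{\varepsilon}$. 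Set
\[
v_{\varepsilon}:=(u-2M\mathscr{E}_{\varepsilon})^{+}\in\tilde{\mathcal{W}}^{s,q(.),p(.,.)}(\Omega)\cap L^{\infty}(\Omega),\qquad \tilde v_{\varepsilon}=(\tilde u-2M\tilde{\mathscr{E}}_{\varepsilon})^{+}.
\]

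I then verify the hypothesis of Lemma \ref{limite nulle sur la frontiere} for $v_{\varepsilon}$. If $z\in O_{\varepsilon}$, then $O_{\varepsilon}=U\cap\overline{\Omega}$ for an ambient open $U\ni z$, and $\tilde v_{\varepsilon}\equiv 0$ on $O_{\varepsilon}$ since $\tilde u\le M<2M=2M\tilde{\mathscr{E}}_{\varepsilon}$ there; hence $\tilde v_{\varepsilon}(x)\to 0$ as $\Omega\ni x\to z$. If $z\in\partial\Omega\setminus O_{\varepsilon}$, then $z\notin P$ implies $\tilde u(z)=0$, and the continuity of $\tilde u$ on $\overline{\Omega}\setminus O_{\varepsilon}$ furnishes, for each $\delta>0$, an ambient neighborhood $W\ni z$ with $\tilde u<\delta$ on $W\cap(\overline{\Omega}\setminus O_{\varepsilon})$; any $x\in W\cap\Omega$ is then either in $O_{\varepsilon}$ (where $\tilde v_{\varepsilon}(x)=0$) or in $W\setminus O_{\varepsilon}$ (where $\tilde v_{\varepsilon}(x)\le\tilde u(x)<\delta$). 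Lemma \ref{limite nulle sur la frontiere} yields $v_{\varepsilon}\in\mathcal{H}^{s,q(.),p(.,.)}_{0}(\Omega)$.

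It remains to show $v_{\varepsilon}\to u$ in $\mathcal{W}^{s,q(.),p(.,.)}(\Omega)$ as $\varepsilon\to 0$. The identity $u-v_{\varepsilon}=\min(u,2M\mathscr{E}_{\varepsilon})$, the bound $\rho^{s,\Omega}_{q(.),p(.,.)}(\mathscr{E}_{\varepsilon})=C^{s,\Omega}_{q(.),p(.,.)}(O_{\varepsilon})<\varepsilon$, and the unit-ball property together give $\mathscr{E}_{\varepsilon}\to 0$ in $\mathcal{W}^{s,q(.),p(.,.)}(\Omega)$; continuity of $\min$ on this Banach lattice then yields $v_{\varepsilon}\to u$. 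Combined with the closedness of $\mathcal{H}^{s,q(.),p(.,.)}_{0}(\Omega)$ in $\mathcal{W}^{s,q(.),p(.,.)}(\Omega)$, this gives $u\in\mathcal{H}^{s,q(.),p(.,.)}_{0}(\Omega)$, as claimed. The main obstacle I anticipate is the Gagliardo-semi-norm part of this last convergence: the $L^{q(.)}$-part is immediate from $|u-v_{\varepsilon}|\le 2M\mathscr{E}_{\varepsilon}$, but controlling
\[
\int_{\Omega}\int_{\Omega}\frac{|(u-v_{\varepsilon})(x)-(u-v_{\varepsilon})(y)|^{p(x,y)}}{|x-y|^{n+sp(x,y)}}\,dx\,dy
\]
genuinely requires the full $\mathcal{W}^{s,q(.),p(.,.)}$-smallness of $\mathscr{E}_{\varepsilon}$ via the Lipschitz inequality $|\min(a,b)-\min(a',b')|\le|a-a'|+|b-b'|$ and a careful splitting of $\Omega\times\Omega$, rather than merely the $L^{q(.)}$-smallness of $\mathscr{E}_{\varepsilon}$.
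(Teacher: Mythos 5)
Your proof follows the same high-level strategy as the paper (build $v_{\varepsilon}\in\mathcal{H}^{s,q(.),p(.,.)}_{0}(\Omega)$ satisfying the boundary-limit hypothesis of Lemma \ref{limite nulle sur la frontiere}, show $v_{\varepsilon}\to u$, invoke closedness), but the construction of $v_{\varepsilon}$ is genuinely different. The paper starts from an approximating sequence $(u_{i})\subset\mathcal{W}^{s,q(.),p(.,.)}(\Omega)\cap\mathscr{C}_{c}(\overline{\Omega})$ coming from the very definition of $\tilde{\mathcal{W}}^{s,q(.),p(.,.)}(\Omega)$, sets $w_{i}=\max(0,u_{i_{0}}-2^{-i})$, and multiplies by $1-\mathscr{E}_{O_{i}\cup G_{i}}$; you instead work directly with the quasicontinuous representative $\tilde u$ itself and set $v_{\varepsilon}=(u-2M\mathscr{E}_{\varepsilon})^{+}$. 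Your route is structurally cleaner and the boundary-limit verification you give is correct. (A small caveat that affects both your argument and, implicitly, the paper's: the definition of $(s,q(.),p(.,.))$-r.q.c.\ only yields continuity of $\tilde u$ on $\Omega\setminus O_{\varepsilon}$, while you and the paper both use continuity on $\overline{\Omega}\setminus O_{\varepsilon}$; the paper secures this by working with the uniform limit of functions continuous on $\overline{\Omega}$, a step your version skips.)

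There is, however, a genuine gap in your last step, and you half-acknowledge it. You assert first that ``continuity of $\min$ on this Banach lattice'' gives $u-v_{\varepsilon}=\min(u,2M\mathscr{E}_{\varepsilon})\to 0$ in $\mathcal{W}^{s,q(.),p(.,.)}(\Omega)$: this inference is not available, because the Gagliardo seminorm is not a lattice norm — $|f|\le|g|$ pointwise does not bound $[f]^{s,p(.,.)}$ by $[g]^{s,p(.,.)}$, so the usual Banach-lattice continuity of $\wedge$ fails here. You then propose as a repair ``the Lipschitz inequality $|\min(a,b)-\min(a',b')|\le|a-a'|+|b-b'|$ and a careful splitting of $\Omega\times\Omega$,'' but the Lipschitz inequality alone is insufficient: it only yields
\[
|h_{\varepsilon}(x)-h_{\varepsilon}(y)|^{p(x,y)}\lesssim |u(x)-u(y)|^{p(x,y)}+(2M)^{p(x,y)}|\mathscr{E}_{\varepsilon}(x)-\mathscr{E}_{\varepsilon}(y)|^{p(x,y)},
\]
and the first summand integrates to the fixed quantity $[u]^{s,p(.,.)}(\Omega)^{p}$, which does not vanish as $\varepsilon\to 0$. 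What is actually needed — and what you do not carry out — is the splitting $A_{\varepsilon}=\{u\le 2M\mathscr{E}_{\varepsilon}\}$, $B_{\varepsilon}=A_{\varepsilon}^{c}$ on $\Omega\times\Omega$, followed by a pointwise a.e.\ argument (along a subsequence with $\mathscr{E}_{\varepsilon_{k}}\to 0$ a.e.) showing the integrand tends to $0$, combined with a \emph{generalized} dominated convergence theorem, since the natural dominating sequence $|u(x)-u(y)|^{p(x,y)}+(2M)^{p(x,y)}|\mathscr{E}_{\varepsilon_{k}}(x)-\mathscr{E}_{\varepsilon_{k}}(y)|^{p(x,y)}$ varies with $k$ (one exploits that its integral converges to $\int\int|u(x)-u(y)|^{p(x,y)}/|x-y|^{n+sp(x,y)}$ because $\rho^{s,\Omega}_{q(.),p(.,.)}(\mathscr{E}_{\varepsilon_{k}})<\varepsilon_{k}\to 0$). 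Without this, the convergence $v_{\varepsilon}\to u$ is not established, and the proof is incomplete.
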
 
\begin{proof}
	Let $ u\in 	\mathcal{W}(\Omega)\cap L^{\infty}(\Omega)$. Without lost of generality we can assume that $ u $ be nonnegative. By definition of the space $\tilde{\mathcal{W}}^{s,q(.),p(.,.)}(\Omega)$, there exist a sequence $ (u_{i})\subset \mathcal{W}^{s,q(.),p(.,.)}(\Omega)\cap \mathscr{C}_{c}(\overline{\Omega})$ of nonnegative functions which converges to $u\in\tilde{\mathcal{W}}^{s,q(.),p(.,.)}(\Omega)$ as $ i $ tends to infinity. There is a subsequence of $ (u_{i}) $, denoted again by 
	$(u_{i})$ and an open set $G_{i}\subset \overline{\Omega}$ such that $C_{q(.),p(.,.)}^{s,\Omega}(G_{i})<\dfrac{1}{2^{i}}$ and $ u_{i}\longrightarrow \tilde{u}$ uniformly on $\overline{\Omega}\setminus G_{i}$, where $\tilde{u}$ denotes the $(s,q(.),p(.,.))$-relatively quasicontinuous representative of $u$. Therefore, there exists $ i_{0}\in\mathbb{N}$ such that 
	\begin{equation*}
	\left|u-u_{i_{0}} \right|\leqslant \dfrac{1}{2^{i}} \ \ \text{everywhere on} \ \overline{\Omega}\setminus G_{i}    
	\end{equation*}
	and 
	\begin{equation*}
	\left\| u-u_{i_{0}}\right\|_{\mathcal{W}^{s,q(.),p(.,.)}(\Omega)}\leqslant \dfrac{1}{2^{i}}. 
	\end{equation*}
	Let $ O_{i}\subset \overline{\Omega} $ be an open set such that $C_{q(.),p(.,.)}^{s,\Omega}(O_{i})<\dfrac{1}{2^{i}}$ and $\tilde{u}=0 $ everywhere on $ \partial\Omega\setminus O_{i}$. Hence 
	\begin{equation*}
	\left|u_{i_{0}} \right|\leqslant \dfrac{1}{2^{i}} \ \ \text{everywhere on} \ \partial\Omega\setminus (O_{i}\cup G_{i}).    
	\end{equation*}
	Let $\mathscr{E}_{O_{i}\cup G_{i}}\in \tilde{\mathcal{W}}^{s,q(.),p(.,.)}(\Omega) $ be the relative $(s,q(.),p(.,.))$-equilibrium potential of $ O_{i}\cup G_{i} $, then
	$\mathscr{E}_{O_{i}\cup G_{i}}$ is $(s,q(.),p(.,.))$-relatively quasicontinuous,  $\mathscr{E}_{O_{i}\cup G_{i}}=1 $ everywhere on $O_{i}\cup G_{i}$ and $ 0\leqslant \mathscr{E}_{O_{i}\cup G_{i}} \leqslant 1$ a.e. on 
	$ \overline{\Omega}$. We define the sequence $ (v_{i}) $ by 
	\begin{equation*}
	v_{i}:=w_{i}(1-\mathscr{E}_{O_{i}\cup G_{i}}), \ \text{where} \ w_{i}=\max(0,u_{i_{0}}-\dfrac{1}{2^{i}}).
	\end{equation*} 
	Next, we show that $v_{i}\in\tilde{\mathcal{W}}^{s,q(.),p(.,.)}(\Omega)$. By following the same outlines given in \cite[Lemma 2.3]{BA} and taking into account that $ w_{i}, \mathscr{E}_{O_{i}\cup G_{i}}\in \mathcal{W}^{s,q(.),p(.,.)}(\Omega)\cap L^{\infty}(\Omega)$, we get that $v_{i}\in \mathcal{W}^{s,q(.),p(.,.)}(\Omega)$. Moreover, it is easy to see that $ v_{i} $ can be approximated in $ \mathcal{W}^{s,q(.),p(.,.)}(\Omega) $ by functions in $\mathcal{W}^{s,q(.),p(.,.)}(\Omega)\cap \mathscr{C}_{c}(\overline{\Omega})$. Hence, $ v_{i}\in \tilde{\mathcal{W}}^{s,q(.),p(.,.)}(\Omega)$. Now, let $ z\in\partial\Omega$ and $ z_{j}\in\Omega $ be such that $z_{j}$ converge to $ z $ as $j$ tends to infinity. Next, we prove that 
	$ \lim_{\Omega\ni x\longrightarrow z} v_{i}(x)=0$ for all $ z \in\partial\Omega$. \\
	If $ z\in\partial\Omega\setminus(O_{i}\cup G_{i})$, then $  v_{i}(z_{j})=w_{i}(z_{j})(1-\mathscr{E}_{O_{i}\cup G_{i}}(z_{j}))$. Thus, 
	\begin{eqnarray*}
		0\leqslant v_{i}(z_{j})\leqslant w_{i}(z_{j})\longrightarrow w_{i}(z) \ \text{as} \ j\longrightarrow\infty 
	\end{eqnarray*}
	and
	\begin{eqnarray*}
		w_{i}(z)=\max(0,u_{i_{0}}(z)-\dfrac{1}{2^{i}})\longrightarrow 0 \ \text{as} \ i\longrightarrow\infty. 
	\end{eqnarray*}
	If $ z\in\partial\Omega\cap(O_{i}\cup G_{i})$, then there exist $n_{0}\in\mathbb{N}$ such that $z_{j}\in(O_{i}\cup G_{i})$ for all $ i\geq n_{0}$. Hence,
	\begin{eqnarray*}
		0\leqslant v_{i}(z_{j})\leqslant w_{i}(z_{j})(1-\mathscr{E}_{O_{i}\cup G_{i}}(z_{j}))=0 \ \text{for all  } \ i\geq n_{0}.
	\end{eqnarray*}
	It follows that $ \lim_{\Omega\ni x\longrightarrow z} v_{i}(x)=0$ for all $ z \in\partial\Omega$. By Lemma \ref{limite nulle sur la frontiere}, we obtain that $ v_{i}\in \mathcal{H}^{s,q(.),p(.,.)}_{0}(\Omega)$. To finish the proof, we claim that $(v_{i})$ converges to $ u$ in $\mathcal{H}^{s,q(.),p(.,.)}_{0}(\Omega)$. Indeed, we have that
	\begin{eqnarray*}
		\left\|u-v_{i} \right\|_{\mathcal{W}^{s,q(.),p(.,.)}(\Omega)}&\leqslant& \left\|u-u_{i_{0}} \right\|_{\mathcal{W}^{s,q(.),p(.,.)}(\Omega)}+ \left\|u_{i_{0}}-w_{i}\right\|_{\mathcal{W}^{s,q(.),p(.,.)}(\Omega)}\\
		&+&\left\|w_{i}-v_{i}\right\|_{\mathcal{W}^{s,q(.),p(.,.)}(\Omega)}\\
		&\leqslant& \left\|u-u_{i_{0}} \right\|_{\mathcal{W}^{s,q(.),p(.,.)}(\Omega)}+ \left\|u_{i_{0}}-w_{i}\right\|_{\mathcal{W}^{s,q(.),p(.,.)}(\Omega)}\\
		&+&\left\|\mathscr{E}_{O_{i}\cup G_{i}}w_{i}\right\|_{\mathcal{W}^{s,q(.),p(.,.)}(\Omega)}\\
		&\leqslant& \left\|u-u_{i_{0}} \right\|_{\mathcal{W}^{s,q(.),p(.,.)}(\Omega)}+ \left\|u_{i_{0}}-w_{i}\right\|_{\mathcal{W}^{s,q(.),p(.,.)}(\Omega)}\\
		&+&\left\|\mathscr{E}_{O_{i}\cup G_{i}}w_{i}\right\|_{\mathcal{W}^{s,q(.),p(.,.)}(O_{i}\cup G_{i})}+\left\|\mathscr{E}_{O_{i}\cup G_{i}}w_{i}\right\|_{\mathcal{W}^{s,q(.),p(.,.)}(\Omega \setminus O_{i}\cup G_{i})}\\
		&\leqslant& \dfrac{1}{2^{i}}+\dfrac{2\max(\left| \Omega\right|^{\frac{1}{q^{+}}},\left| \Omega\right|^{\frac{1}{q^{-}}} )}{2^{i}}+\dfrac{4\max(\left| \Omega\right|^{\frac{1}{q^{+}}},\left| \Omega\right|^{\frac{1}{q^{-}}} )}{2^{i}}\left\|u \right\|_{\infty}. 	
	\end{eqnarray*}
	Hence, $(v_{i})$ converges to $ u$ in $\mathcal{H}^{s,q(.),p(.,.)}_{0}(\Omega)$ as $ i $ tends to infinity.    
\end{proof}
The following characterizations of the space $ \mathcal{H}^{s,q(.),p(.,.)}_{0}(\Omega) $ suggest that we  have functions that are the fractional relative $(s,q(.),p(.,.))$-capacity zero on the boundary $ \partial\Omega$. More precisely, we can describe $\mathcal{H}^{s,q(.),p(.,.)}_{0}(\Omega)$ as a subspace of $\tilde{\mathcal{W}}^{s,q(.),p(.,.)}(\Omega)$ in the following way:  
\begin{theorem}\label{description of H^{s,p}_{0} by relative capacity}
	Let $\Omega\subset \mathbb{R}^{n}$ be an open set, $s\in(0,1)$, $q\in \mathcal{P}(\Omega)$ and 
	$p\in\mathcal{P}(\Omega\times\Omega)$. Then
	\begin{eqnarray*}
		\mathcal{H}^{s,q(.),p(.,.)}_{0}(\Omega)=\left\lbrace u\in \tilde{\mathcal{W}}^{s,q(.),p(.,.)}(\Omega):\ \tilde{u}=0 \ (s,q(.),p(.,.))\textnormal{-r.q.e. on} \ \partial\Omega   \right\rbrace, 	
	\end{eqnarray*} 
	where $\tilde{u}$ denotes the $(s,q(.),p(.,.))$-relatively quasicontinuous representative of $u$.
\end{theorem}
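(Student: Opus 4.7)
The plan is to prove both inclusions of the claimed equality. The forward inclusion $\mathcal{H}^{s,q(.),p(.,.)}_{0}(\Omega) \subseteq \{u \in \tilde{\mathcal{W}}^{s,q(.),p(.,.)}(\Omega) : \tilde{u} = 0 \ (s,q(.),p(.,.))\text{-r.q.e.\ on } \partial\Omega\}$ is the soft direction, while the reverse inclusion reduces to Theorem \ref{inclusion for bounded set} by a truncation step, plus a localization step when $\Omega$ is unbounded.

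For the forward inclusion, I would take $u \in \mathcal{H}^{s,q(.),p(.,.)}_{0}(\Omega)$ and invoke the definition to produce a sequence $\varphi_i \in \mathscr{C}_{0}^{\infty}(\Omega)$ with $\varphi_i \to u$ in $\mathcal{W}^{s,q(.),p(.,.)}(\Omega)$. Each $\varphi_i$ belongs to $\mathcal{W}^{s,q(.),p(.,.)}(\Omega) \cap \mathscr{C}_c(\overline{\Omega})$ and vanishes identically on $\partial\Omega$. Theorem \ref{subsequence which converges $p(.)$-r.q.e.} then supplies a subsequence that converges $(s,q(.),p(.,.))$-r.q.e.\ and uniformly outside sets of arbitrarily small relative capacity; its pointwise limit is $(s,q(.),p(.,.))$-r.q.c.\ on $\overline{\Omega}$ and coincides with $u$ a.e. Theorem \ref{egalite quasipartout} identifies this limit with $\tilde{u}$, and since every $\varphi_i$ is identically zero on $\partial\Omega$, the limit $\tilde{u}$ must vanish $(s,q(.),p(.,.))$-r.q.e.\ on $\partial\Omega$.

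For the reverse inclusion, let $u \in \tilde{\mathcal{W}}^{s,q(.),p(.,.)}(\Omega)$ satisfy $\tilde{u} = 0$ r.q.e.\ on $\partial\Omega$. Writing $u = u^{+} - u^{-}$ I may assume $u \geq 0$. I would then truncate, $u_k := \min\{u, k\} \in \tilde{\mathcal{W}}^{s,q(.),p(.,.)}(\Omega) \cap L^{\infty}(\Omega)$; its q.c.\ representative coincides with $\min\{\tilde{u}, k\}$ up to a relatively polar set by the uniqueness in Theorem \ref{egalite quasipartout}, so $\widetilde{u_k} = 0$ r.q.e.\ on $\partial\Omega$ is preserved. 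Since truncation is nonexpansive, dominated convergence in the modular gives $u_k \to u$ in $\mathcal{W}^{s,q(.),p(.,.)}$-norm. When $\Omega$ is bounded, Theorem \ref{inclusion for bounded set} immediately places each $u_k$ into $\mathcal{H}^{s,q(.),p(.,.)}_{0}(\Omega)$, and closedness of that space delivers $u$. For unbounded $\Omega$, I would localize by a smooth cutoff $\eta_R$ with $\eta_R \equiv 1$ on $B(0,R)$ and $\operatorname{supp}\eta_R \subset B(0,2R)$, set $v_{k,R} := u_k \eta_R$, apply the bounded case on $\Omega \cap B(0, 3R)$, and pass $R \to \infty$ followed by $k \to \infty$.

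The main obstacle is the cutoff step in the unbounded case: multiplying by $\eta_R$ does not interact cleanly with the nonlocal Gagliardo seminorm, since it creates long-range cross contributions between $B(0,R)$ and $\Omega \setminus B(0,2R)$ in the modular $\rho^{s,\Omega}_{q(.),p(.,.)}$ that must be controlled by a Leibniz-type estimate for the fractional modular of Baalal--Berghout type. The transfer of the r.q.e.\ vanishing from $u_k$ to $v_{k,R}$ on the boundary pieces is then routine via Corollary \ref{sous suite} and the strong subadditivity of Proposition \ref{strong subbaditivity de capacity}, and the double limit in $k$ and $R$ closes on $u$ thanks to the closedness of $\mathcal{H}^{s,q(.),p(.,.)}_{0}(\Omega)$.
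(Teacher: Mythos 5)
Your proof follows essentially the same route as the paper's: the forward inclusion via Corollary \ref{sous suite} applied to an approximating sequence from $\mathscr{C}^{\infty}_{0}(\Omega)$, and the reverse inclusion by reducing first to the bounded case of Theorem \ref{inclusion for bounded set} through truncation, then to unbounded $\Omega$ through multiplication by dilated cutoffs. Your flagging of the Gagliardo cross-term difficulty in the cutoff step is a fair observation — the paper also uses the ansatz $u_{i}(x)=\varphi(i^{-1}x)u(x)$ and simply asserts $u_i\to u$ in $\mathcal{W}^{s,q(.),p(.,.)}(\Omega)$ without spelling out the Leibniz-type modular estimate needed to control the long-range terms.
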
 
\begin{proof}
We set 
\begin{eqnarray*}
	\mathcal{W}(\Omega):=\left\lbrace u\in \tilde{\mathcal{W}}^{s,q(.),p(.,.)}(\Omega):\ \tilde{u}=0 \ (s,q(.),p(.,.))\textnormal{-r.q.e. on} \ \partial\Omega   \right\rbrace, 	
\end{eqnarray*} 
where $\tilde{u}$ denotes the $(s,q(.),p(.,.))$-relatively quasicontinuous representative of $u$. Let $ u\in\mathcal{H}^{s,q(.),p(.,.)}_{0}(\Omega)$, then there exists a sequence $ (u_{i})\in\mathscr{C}^{\infty}_{0}(\Omega)$ such that $u_{i}$ converges to $ u $ in $\tilde{\mathcal{W}}^{s,q(.),p(.,.)}(\Omega)$ as $ i $ tends to infinity. By Corollary \ref{sous suite}, there exists a subsequence denotes again by $u_{i}$ which converges to $\tilde{u}$ $(s,q(.),p(.,.))$-r.q.e. on $\overline{\Omega}$. Hence, $\tilde{u}=0$ $(s,q(.),p(.,.))$-r.q.e. on $\partial\Omega$, that is, $ u\in\mathcal{W}(\Omega) $. Hence 
\begin{equation*}
\mathcal{H}^{s,q(.),p(.,.)}_{0}(\Omega)\subset\mathcal{W}(\Omega).
\end{equation*}
We claim the converse inclusion. First, assume that $ \Omega $ is bounded and $ u\in 	\mathcal{W}(\Omega)\cap L^{\infty}(\Omega)$, then by Theorem \ref{inclusion for bounded set}, we get that
	\begin{equation*}
\mathcal{W}(\Omega)\cap L^{\infty}(\Omega)\subset \mathcal{H}^{s,q(.),p(.,.)}_{0}(\Omega).	
\end{equation*}
Now, if $ u\in \mathcal{W}(\Omega)$ is unbounded, then, for $i\in\mathbb{N}$, $  u_{i}:=\max\left\lbrace \min(u,i),-i \right\rbrace \in\mathcal{W}(\Omega)\cap L^{\infty}(\Omega)$. Next, we claim that $(u_{i})$ converges to $ u $ in $ \mathcal{W}^{s,q(.),p(.,.)}(\Omega)$. Indeed, let 
$u\in\mathcal{W}^{s,q(.),p(.,.)}(\Omega)$ and for $i\in\mathbb{N}$, $  u_{i}:=\max\left\lbrace \min(u,i),-i \right\rbrace$. Then
\begin{eqnarray*}
	u_{i}(x)=
	\left\lbrace
	\begin{array}{rl}
		i & \text{ on} \ D_{1},\\
		u & \text{ on} \ D_{2},\\
		-i & \text{ on} \ D_{3},
	\end{array}
	\right.
\end{eqnarray*}
where 
\begin{equation*}
 D_{1}:=\left\lbrace x\in\Omega,u(x)\geq i \right\rbrace, D_{2}:=\left\lbrace x\in\Omega,-i<u(x)< i \right\rbrace \   
\end{equation*}
and
\begin{equation*}
 \ D_{3}:=\left\lbrace x\in\Omega,u(x)\leqslant -i \right\rbrace.  
\end{equation*}
We have that $ u_{i}\longrightarrow u $ a.e. in $\Omega$ and $ \left|u_{i}(x)\right|^{q(x)}\leqslant \left|u(x)\right|^{q(x)}\in L^{1}(\Omega)$. By the Lebesgue dominated convergence theorem, we get that $ u_{i}\in L^{q(.)}(\Omega)$ and converges to $ u $ in $L^{q(.)}(\Omega)$. Moreover,  
\begin{eqnarray*}
\int_{\Omega}\int_{\Omega}\dfrac{\left| (u_{i}-u)(x)-(u_{i}-u)(y)\right|^{p(x,y)} }{\left|x-y \right|^{n+sp(x,y)}} \ dx \ dy= \int_{D_{1}}\int_{D_{1}}\dfrac{\left|u(x)-u(y)\right|^{p(x,y)} }{\left|x-y \right|^{n+sp(x,y)}} \ dx \ dy\\
+ \int_{D_{1}}\int_{D_{2}}\dfrac{\left|i-u(x)\right|^{p(x,y)} }{\left|x-y \right|^{n+sp(x,y)}} \ dx \ dy
+ \int_{D_{1}}\int_{D_{3}}\dfrac{\left|2i+u(y)-u(x)\right|^{p(x,y)} }{\left|x-y \right|^{n+sp(x,y)}} \ dx \ dy\\
+ \int_{D_{2}}\int_{D_{1}}\dfrac{\left|u(y)-i\right|^{p(x,y)} }{\left|x-y \right|^{n+sp(x,y)}} \ dx \ dy
+ \int_{D_{2}}\int_{D_{3}}\dfrac{\left|i+u(y)\right|^{p(x,y)} }{\left|x-y \right|^{n+sp(x,y)}} \ dx \ dy\\
+ \int_{D_{3}}\int_{D_{1}}\dfrac{\left|-2i+u(y)-u(x)\right|^{p(x,y)} }{\left|x-y \right|^{n+sp(x,y)}} \ dx \ dy
+ \int_{D_{3}}\int_{D_{2}}\dfrac{\left|i+u(x)\right|^{p(x,y)} }{\left|x-y \right|^{n+sp(x,y)}} \ dx \ dy\\
+ \int_{D_{3}}\int_{D_{3}}\dfrac{\left|u(y)-u(x)\right|^{p(x,y)} }{\left|x-y \right|^{n+sp(x,y)}} \ dx \ dy.										
\end{eqnarray*}
 Hence $ u_{i}\in \mathcal{W}^{s,q(.),p(.,.)}(\Omega)$. Moreover, applying the Lebesgue dominated convergence theorem to each term in the right hand side of the last equality, we get that 
 \begin{equation*}
 \int_{\Omega}\int_{\Omega}\dfrac{\left| (u_{i}-u)(x)-(u_{i}-u)(y)\right|^{p(x,y)} }{\left|x-y \right|^{n+sp(x,y)}} \ dx \ dy\longrightarrow 0 \ \text{as} \ i\longrightarrow\infty. 
 \end{equation*}
 Hence $ u_{i} $ converges to $ u $ in $ \mathcal{W}^{s,q(.),p(.,.)}(\Omega)$ and $u_{i}\in \mathcal{H}^{s,q(.),p(.,.)}_{0}(\Omega)$. Now, by Theorem \ref{inclusion for bounded set}, we have that $u\in \mathcal{H}^{s,q(.),p(.,.)}_{0}(\Omega)$. Finally, if $ \Omega $ is unbounded then we choose $ \varphi\in \mathscr{C}^{\infty}_{0}(B(0,r))$ such that $ \varphi=1 $ on $B(0,r)$. For $ u\in \mathcal{W}(\Omega)$ and $ i\in \mathbb{N}^{*} $, we set $ u_{i}(x):=\varphi(i^{-1}x)u(x) $. Then
 \begin{equation*}
 u_{i}\in \mathcal{W}(\Omega\cap B(0,i))\subset \mathcal{H}^{s,q(.),p(.,.)}_{0}(\Omega\cap B(0,i))\subset\mathcal{H}^{s,q(.),p(.,.)}_{0}(\Omega).
 \end{equation*}
 Since $ u_{i}$ converges to $ u $ in $ \mathcal{W}^{s,q(.),p(.,.)}(\Omega)$ as $ i $ tends to infinity, we get that 
 \begin{equation*}
  \mathcal{W}\subset \mathcal{H}^{s,q(.),p(.,.)}_{0}(\Omega).
 \end{equation*}   
\end{proof}  
The following corollary gives a necessary and sufficient condition in term of the $(s,q(.),p(.,.))$-relative capacity for the equality $\tilde{\mathcal{W}}^{s,q(.),p(.,.)} _{0}(\Omega)=\tilde{\mathcal{W}}^{s,q(.),p(.,.)}(\Omega)$.
\begin{corollary}
	Let $\Omega\subset \mathbb{R}^{n}$ be open. Then the following assertions are equivalent.
	\begin{enumerate}
		\item $ C_{q(.),p(.,.)}^{s,\Omega}(\partial\Omega)=0$;
		\item $\mathcal{H}^{s,q(.),p(.,.)}_{0}(\Omega)=\tilde{\mathcal{W}}^{s,q(.),p(.,.)}(\Omega)$.
	\end{enumerate}
\end{corollary}
The following definition for another fractional variable exponent Sobolev zero trace spaces $ \tilde{\mathcal{W}}^{s,q(.),p(.,.)}_{0}(\Omega) $ uses the $(s,q(.),p(.,.))$-relatively quasicontinuous representative.   
\begin{definition}
	Let $s\in(0,1), q\in \mathcal{P}(\Omega)$ and 
	$p\in\mathcal{P}(\Omega\times\Omega)$. We denote
$ u \in \tilde{\mathcal{W}}^{s,q(.),p(.,.)}_{0}(\Omega)$ and say that $ u $ belongs to the fractional variable exponent Sobolev zero trace spaces $ \tilde{\mathcal{W}}^{s,q(.),p(.,.)}_{0}(\Omega) $ if there exists a $(s,q(.),p(.,.))$-relatively quasicontinuous function $ \tilde{u}\in\tilde{\mathcal{W}}^{s,q(.),p(.,.)}(\Omega)$ such
that $ \tilde{u}= u$ a.e. in $\Omega$  and $ \tilde{u}=0$ $(s,q(.),p(.,.))$-r.q.e. in $ \Omega^{c} $.
\end{definition}
The set $ \tilde{\mathcal{W}}^{s,q(.),p(.,.)}_{0}(\Omega)$ is endowed with the norm 
\begin{equation*}
\left\|u \right\|_{ \tilde{\mathcal{W}}^{s,q(.),p(.,.)}_{0}(\Omega)}=\left\|\tilde{u} \right\|_{\tilde{\mathcal{W}}^{s,q(.),p(.,.)}(\Omega)}. 	
\end{equation*}
Observe that, from the definition of the fractional relative $(s,q(.),p(.,.))$-capacity, we deduce that every measurable set $ E\subset \overline{\Omega}$ satisfies $ \left|E \right|\leqslant C_{q(.),p(.,.)}^{s,\Omega}(E)$. It follows that, $ C_{q(.),p(.,.)}^{s,\Omega}(E)=0 $ implies that $\left|E \right|=0$. Hence, the norm  does not depend on the choice of the $(s,q(.),p(.,.))$-relatively quasicontinuous representative.

In contrast to $\mathcal{H}^{s,q(.),p(.,.)}_{0}(\Omega) $, the space $ \tilde{\mathcal{W}}^{s,q(.),p(.,.)}_{0}(\Omega) $ has the following fundamental property:
\begin{proposition}\label{produit dans Sobolev}
Let $u\in \tilde{\mathcal{W}}^{s,q(.),p(.,.)}_{0}(\Omega)$ and $ v\in \tilde{\mathcal{W}}^{s,q(.),p(.,.)}(\Omega)$ are bounded functions. Assume that $ v $ is $ (s,q(.),p(.,.))$-relatively quasicontinuous. Then $ uv\in \tilde{\mathcal{W}}^{s,q(.),p(.,.)}_{0}(\Omega)$. 
\end{proposition}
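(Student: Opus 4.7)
The plan is to exhibit an explicit $(s,q(.),p(.,.))$-r.q.c. representative of $uv$. Let $\tilde u$ denote the representative of $u$ supplied by the definition of $\tilde{\mathcal{W}}^{s,q(.),p(.,.)}_{0}(\Omega)$, so that $\tilde u\in\tilde{\mathcal{W}}^{s,q(.),p(.,.)}(\Omega)$, $\tilde u=u$ a.e.\ in $\Omega$, and $\tilde u=0$ $(s,q(.),p(.,.))$-r.q.e.\ on $\Omega^{c}$. The natural candidate witness for $uv$ is $w:=\tilde u\,v$, and the task is to check the four properties required by the definition: (a) $w$ is $(s,q(.),p(.,.))$-r.q.c., (b) $w\in\tilde{\mathcal{W}}^{s,q(.),p(.,.)}(\Omega)$, (c) $w=uv$ a.e.\ in $\Omega$, and (d) $w=0$ r.q.e.\ on $\Omega^{c}$.

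Items (a), (c), (d) are immediate. For (a), given $\varepsilon>0$ one selects relatively open $O_{1},O_{2}\subset\Omega$ with $C^{s,\Omega}_{q(.),p(.,.)}(O_{i})<\varepsilon/2$ on whose complements $\tilde u$ and $v$ are continuous; strong subadditivity (Proposition~\ref{strong subbaditivity de capacity}) then yields $C^{s,\Omega}_{q(.),p(.,.)}(O_{1}\cup O_{2})<\varepsilon$ while $w$ is continuous on $\Omega\setminus(O_{1}\cup O_{2})$. Item (c) follows at once from $\tilde u=u$ a.e., and (d) follows because on any set where $\tilde u=0$ one has $w=\tilde u v=0$ by boundedness of $v$, so the exceptional relatively polar set for $\tilde u$ serves also for $w$.

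The main step is (b). I would pick approximating sequences $(u_{n}),(v_{n})\subset\mathcal{W}^{s,q(.),p(.,.)}(\Omega)\cap\mathscr{C}_{c}(\overline{\Omega})$ converging to $\tilde u$ and $v$ respectively in $\mathcal{W}^{s,q(.),p(.,.)}(\Omega)$. Setting $M:=\|\tilde u\|_{\infty}$ and $N:=\|v\|_{\infty}$ and truncating at levels $M$ and $N$, one may assume uniform sup-bounds on the sequences; the truncation preserves convergence via the same dominated-convergence argument on the Gagliardo double integral that is used at the end of the proof of Theorem~\ref{description of H^{s,p}_{0} by relative capacity}. Each product $u_{n}v_{n}$ then lies in $\mathcal{W}^{s,q(.),p(.,.)}(\Omega)\cap\mathscr{C}_{c}(\overline{\Omega})$, using the pointwise identity
$$|(fg)(x)-(fg)(y)|\le|f(x)|\,|g(x)-g(y)|+|g(y)|\,|f(x)-f(y)|$$
combined with the uniform $L^{\infty}$-bounds and $|a|^{p(x,y)}\le\max(|a|^{p^{-}},|a|^{p^{+}})$. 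It remains to show $u_{n}v_{n}\to w$ in $\mathcal{W}^{s,q(.),p(.,.)}(\Omega)$: writing $u_{n}v_{n}-w=(u_{n}-\tilde u)v_{n}+\tilde u(v_{n}-v)$ and applying the same pointwise product inequality to each summand, the terms pairing a bounded pointwise factor with a Gagliardo difference of the converging sequence vanish in the limit by modular convergence of $u_{n}-\tilde u$ and $v_{n}-v$.

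The main obstacle is the cross terms, in which a pointwise factor $|u_{n}-\tilde u|$ (respectively $|v_{n}-v|$) multiplies a fixed Gagliardo difference $|v(x)-v(y)|$ (respectively $|\tilde u(x)-\tilde u(y)|$): for these the bound is not linear in the modulars of $u_{n}-\tilde u$ or $v_{n}-v$. Here the plan is to pass to a subsequence giving a.e.\ pointwise convergence $u_{n}\to\tilde u$ and $v_{n}\to v$, then invoke the Lebesgue dominated convergence theorem in the Gagliardo double integral with dominating functions $(2M)^{p(x,y)}|v(x)-v(y)|^{p(x,y)}|x-y|^{-n-sp(x,y)}$ and its analogue with $\tilde u$ and $v$ interchanged, whose integrability is guaranteed by $\tilde u,v\in\tilde{\mathcal{W}}^{s,q(.),p(.,.)}(\Omega)$. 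Once the convergence $u_{n}v_{n}\to w$ in $\mathcal{W}^{s,q(.),p(.,.)}(\Omega)$ is established, (b) is proved and the argument concludes.
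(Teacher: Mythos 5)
Your proposal follows the same route as the paper: take the representative $\tilde u$ supplied by the definition of $\tilde{\mathcal{W}}^{s,q(.),p(.,.)}_{0}(\Omega)$, form the candidate $w:=\tilde u\,v$, and verify the four defining conditions. The paper simply asserts that $uv\in\tilde{\mathcal{W}}^{s,q(.),p(.,.)}(\Omega)$ and handles the vanishing on $\Omega^{c}$ by applying strong subadditivity to the sets $\{\tilde u\neq 0\}$ and $\{v=\infty\}$ in $\Omega^{c}$, whereas you dispose of (d) more cleanly (where $\tilde u=0$ and $v$ is finite, $w=0$ immediately) and instead concentrate your effort on (b), the Sobolev membership the paper takes for granted. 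Your (a), (c), (d) are all fine.

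The one genuine issue is in (b). Your written decomposition
\begin{equation*}
u_{n}v_{n}-w=(u_{n}-\tilde u)v_{n}+\tilde u(v_{n}-v)
\end{equation*}
does not produce the cross terms you then analyze. Applying the product inequality to the first summand with $f=u_{n}-\tilde u$, $g=v_{n}$ yields the cross term $|(u_{n}-\tilde u)(x)|\,|v_{n}(x)-v_{n}(y)|$; the Gagliardo difference here is of $v_{n}$, not of the \emph{fixed} $v$, so the proposed dominant $(2M)^{p(x,y)}|v(x)-v(y)|^{p(x,y)}|x-y|^{-n-sp(x,y)}$ does not majorize the integrand, and dominated convergence cannot be invoked as stated. (Your second cross term, $|(v_{n}-v)(y)|\,|\tilde u(x)-\tilde u(y)|$, is fine: here $|\tilde u(x)-\tilde u(y)|$ is fixed and the dominant with $\tilde u$ works.) The description you give of the cross terms — one factor $|u_{n}-\tilde u|$ against the fixed $|v(x)-v(y)|$, one factor $|v_{n}-v|$ against the fixed $|\tilde u(x)-\tilde u(y)|$ — actually corresponds to the three-term decomposition
\begin{equation*}
u_{n}v_{n}-\tilde u v=(u_{n}-\tilde u)v+\tilde u(v_{n}-v)+(u_{n}-\tilde u)(v_{n}-v),
\end{equation*}
in which the first two summands give exactly your cross terms with \emph{fixed} Gagliardo differences, and the third summand has both factors converging to zero so that either ordering of the product inequality closes it via modular convergence together with the uniform $L^{\infty}$-bounds. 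Replacing your two-term split by this three-term split (or, equivalently, inserting the further split $v_{n}=(v_{n}-v)+v$ inside the problematic cross term) repairs the argument; as written, the DCT step does not go through.
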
 
\begin{proof}
Let $u\in \tilde{\mathcal{W}}^{s,q(.),p(.,.)}_{0}(\Omega)$ and $ v\in \tilde{\mathcal{W}}^{s,q(.),p(.,.)}(\Omega)$ are bounded functions. We assume that $ v $ is $ (s,q(.),p(.,.))$-relatively quasicontinuous. Then $uv\in\tilde{\mathcal{W}}^{s,q(.),p(.,.)}(\Omega)$. Let $\tilde{u}\in \tilde{\mathcal{W}}^{s,q(.),p(.,.)}(\Omega)$ be the $(s,q(.),p(.,.))$-relatively quasicontinuous representative of $ u $. Then $\tilde{u}v$ is $ (s,q(.),p(.,.))$-relatively quasicontinuous in $ \Omega$. We set $A:=\left\lbrace x\in\Omega^{c};\tilde{u}(x)\neq 0  \right\rbrace$ and $B:=\left\lbrace x\in\Omega^{c};v(x)= \infty  \right\rbrace$. Then $ C_{q(.),p(.,.)}^{s,\Omega}(A)=0$ and $ C_{q(.),p(.,.)}^{s,\Omega}(B)=0$. By the strong subadditivity of the fractional relative $ (s,q(.),p(.,.))$-capacity (Proposition \ref{strong subbaditivity de capacity}), we get that $ C_{q(.),p(.,.)}^{s,\Omega}(A\cup B)=0$. Hence $\tilde{u}v=0 \ (s,q(.),p(.,.)) \ \text{-q.e. in} \ \Omega^{c}$. Now, since $\tilde{u}v=uv \ \text{a.e. in} \ \Omega$, we deduce that $ uv\in \tilde{\mathcal{W}}^{s,q(.),p(.,.)}_{0}(\Omega)$.
\end{proof}
The next theorem gives a functional analysis-type properties of Sobolev spaces $ \tilde{\mathcal{W}}^{s,q(.),p(.,.)}_{0}(\Omega)$.  
\begin{theorem}\label{Banach quasi continuous}
	Let $s\in(0,1), q\in \mathcal{P}(\Omega)$ and 
$p\in\mathcal{P}(\Omega\times\Omega)$. Then $ \tilde{\mathcal{W}}^{s,q(.),p(.,.)}_{0}(\Omega)$ is a separable, reflexive and uniformly convex Banach lattice space.
\end{theorem}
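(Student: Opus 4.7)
The plan is to exhibit $\tilde{\mathcal{W}}^{s,q(.),p(.,.)}_{0}(\Omega)$ as a closed vector sublattice of the separable, reflexive, uniformly convex Banach lattice $\tilde{\mathcal{W}}^{s,q(.),p(.,.)}(\Omega)$, via the isometric map $u\mapsto \tilde{u}$ that sends each class to its $(s,q(.),p(.,.))$-relatively quasicontinuous representative. Once this is accomplished, all four desired properties transfer automatically, since any closed sublattice of a separable, reflexive, uniformly convex Banach lattice inherits these features.

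First I would verify that $u\mapsto \tilde{u}$ is a well-defined linear isometry into $\tilde{\mathcal{W}}^{s,q(.),p(.,.)}(\Omega)$. Well-definedness is forced by Theorem \ref{egalite quasipartout}, since two q.c. representatives of the same class coincide $(s,q(.),p(.,.))$-r.q.e.; and because polar sets have Lebesgue measure zero (via the inequality $|E|\le C^{s,\Omega}_{q(.),p(.,.)}(E)$ observed just after the definition of $\tilde{\mathcal{W}}^{s,q(.),p(.,.)}_{0}(\Omega)$), two such representatives define the same class in $\tilde{\mathcal{W}}^{s,q(.),p(.,.)}(\Omega)$. The isometry statement is then just the definition of the norm. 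Linearity rests on property $(C_{5})$ of the capacity: $\alpha\tilde{u}+\beta\tilde{v}$ is continuous outside the union of the two exceptional open sets, which has arbitrarily small capacity; and it vanishes q.e. on $\Omega^{c}$ because a finite union of polar sets is polar.

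Next I would prove that the image is closed. Given a Cauchy sequence $(u_{n})$ in $\tilde{\mathcal{W}}^{s,q(.),p(.,.)}_{0}(\Omega)$, the representatives $(\tilde{u}_{n})$ are Cauchy in $\tilde{\mathcal{W}}^{s,q(.),p(.,.)}(\Omega)$ and converge to some $u$, which by Theorem \ref{quasicontinuous representative} admits a q.c. representative $\tilde{u}$. By Corollary \ref{sous suite}, a subsequence of $(\tilde{u}_{n})$ converges to $\tilde{u}$ pointwise $(s,q(.),p(.,.))$-r.q.e. on $\Omega$, and by the argument inside the proof of Theorem \ref{subsequence which converges $p(.)$-r.q.e.} this convergence in fact holds on $\overline{\Omega}\setminus P$ for some $(s,q(.),p(.,.))$-relatively polar set $P$. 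Since each $\tilde{u}_{n}$ vanishes outside a polar subset of $\Omega^{c}$, the countable union of these sets together with $P$ is still polar by $(C_{5})$, and outside it one has $\tilde{u}=\lim \tilde{u}_{n}=0$ on $\Omega^{c}$; hence $u\in\tilde{\mathcal{W}}^{s,q(.),p(.,.)}_{0}(\Omega)$.

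Finally, for the Banach lattice structure I would check that the operations $\vee$, $\wedge$, $|\cdot|$ preserve membership: if $\tilde{u}$ and $\tilde{v}$ are q.c. and vanish q.e. on $\Omega^{c}$, so are their pointwise maximum, minimum and absolute values, because these are continuous operations on $\mathbb{R}$ or $\mathbb{R}^{2}$ and trivially preserve the zero condition; the strong subadditivity of the capacity (Proposition \ref{strong subbaditivity de capacity}) controls the exceptional sets. The lattice-norm monotonicity $|u|\le|v|\Rightarrow \|u\|\le\|v\|$ is inherited unchanged from $\tilde{\mathcal{W}}^{s,q(.),p(.,.)}(\Omega)$ through the isometric embedding. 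I expect the main obstacle to be the closedness step, where one must carefully thread together the q.e. convergence of a subsequence with the fact that each $\tilde{u}_{n}$ vanishes q.e. on $\Omega^{c}$; countable subadditivity $(C_{5})$ together with the uniqueness statement in Theorem \ref{egalite quasipartout} are the decisive tools that make this argument go through cleanly.
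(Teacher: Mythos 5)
Your proposal follows essentially the same route as the paper's proof: completeness via a Cauchy-sequence argument combined with Corollary~\ref{sous suite} to keep the limit's quasicontinuous representative vanishing r.q.e.\ on $\Omega^{c}$; lattice structure from the fact that $\max$ and $\min$ of r.q.c.\ functions remain r.q.c.; and separability, reflexivity, and uniform convexity inherited from the ambient space $\tilde{\mathcal{W}}^{s,q(.),p(.,.)}(\Omega)$ once closedness is established. Your framing as a closed isometric embedding $u\mapsto\tilde{u}$ is essentially a re-description of the paper's identification of $\tilde{\mathcal{W}}^{s,q(.),p(.,.)}_{0}(\Omega)$ as a closed subspace, and you fill in the well-definedness and boundary-vanishing bookkeeping (via Theorem~\ref{egalite quasipartout} and $(C_{5})$) more carefully than the paper's terse ``Hence $u=0$ r.q.e.\ in $\Omega^{c}$,'' which is a welcome expansion of the same argument rather than a different method.
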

\begin{proof}
Let $ (u_{i}) $ be a Cauchy sequence in $ \tilde{\mathcal{W}}^{s,q(.),p(.,.)}_{0}(\Omega)$. Then for every $ i=1,2,\ldots$ there is a $(s,q(.),p(.,.))$-relatively quasicontinuous representative $ \tilde{u_{i}}\in\tilde{\mathcal{W}}^{s,q(.),p(.,.)}(\Omega)$ of $u$. Since $ \tilde{\mathcal{W}}^{s,q(.),p(.,.)}(\Omega)$ is a closed subspace of a Banach space $\mathcal{W}^{s,q(.),p(.,.)}(\Omega)$, there is 
$u\in \tilde{\mathcal{W}}^{s,q(.),p(.,.)}(\Omega)$ such that 
$\tilde{u_{i}}$ converges to $ u $ in $ \tilde{\mathcal{W}}^{s,q(.),p(.,.)}(\Omega)$ as $ i $ tends to infinity. According to Corollary \ref{sous suite}, $u$ is $ (s,q(.),p(.,.)) $-relatively quasicontinuous and there is a subsequence of $(\tilde{u_{i}})$ which converges to $ u $ $ (s,q(.),p(.,.)) $-r.q.e
in $\tilde{\mathcal{W}}^{s,q(.),p(.,.)}(\Omega)$ as $ i $ tends to infinity. Hence $ u=0 $ $(s,q(.),p(.,.))$-r.q.e. in $\Omega^{c}$. Therefore $ u\in\tilde{\mathcal{W}}^{s,q(.),p(.,.)}_{0}(\Omega)$ and the space $ \tilde{\mathcal{W}}^{s,q(.),p(.,.)}_{0}(\Omega)$ is complete.

To show that $ \tilde{\mathcal{W}}^{s,q(.),p(.,.)}_{0}(\Omega)$ is a lattice, it is suffices to prove the assertions for $ \max\left\lbrace u,v\right\rbrace  $ since $ \min\left\lbrace u,v\right\rbrace =-\max\left\lbrace -u,-v\right\rbrace$. Let $u,v\in \tilde{\mathcal{W}}^{s,q(.),p(.,.)}_{0}(\Omega)$, then there exists a $ (s,q(.),p(.,.)) $-relatively quasicontinuous functions $ \tilde{u}, \tilde{v}\in\tilde{\mathcal{W}}^{s,q(.),p(.,.)}(\Omega)$ of $ u $ and $ v $ respectively. It is clear that $ \max\left\lbrace \tilde{u},\tilde{v}\right\rbrace  $ and $\min\left\lbrace \tilde{u},\tilde{v}\right\rbrace$ are also $(s,q(.),p(.,.))$-relatively quasicontinuous functions. Moreover,   
\begin{eqnarray*}
	\left\|\max\left\lbrace u,v\right\rbrace\right\|_{ \tilde{\mathcal{W}}^{s,q(.),p(.,.)}_{0}(\Omega)}&\leqslant&\left\|u \right\|_{ \tilde{\mathcal{W}}^{s,q(.),p(.,.)}_{0}(\Omega)}+\left\|v \right\|_{ \tilde{\mathcal{W}}^{s,q(.),p(.,.)}_{0}(\Omega)}\\
	&\leqslant& \left\|\tilde{u} \right\|_{\tilde{\mathcal{W}}^{s,q(.),p(.,.)}(\Omega)}+\left\|\tilde{v} \right\|_{\tilde{\mathcal{W}}^{s,q(.),p(.,.)}(\Omega)}\\
	 &<&\infty.	
\end{eqnarray*}     
Thus it follows that $\max\left\lbrace u,v\right\rbrace\in \tilde{\mathcal{W}}^{s,q(.),p(.,.)}_{0}(\Omega)$.\\
Since $\mathcal{\tilde{{W}}}^{s,q(.),p(.,.)}_{0}(\Omega) $ is a proper closed subspace of $ \mathcal{\tilde{{W}}}^{s,q(.),p(.,.)}(\Omega)$, it is also a separable, reflexive and uniformly convex space.   
\end{proof}
Using the fact that $\mathscr{C}^{\infty}_{0}(\Omega)\subset \tilde{\mathcal{W}}^{s,q(.),p(.,.)}_{0}(\Omega)$ and combining the Theorem \ref{Banach quasi continuous} with  the definition of the space $ \tilde{\mathcal{W}}^{s,q(.),p(.,.)}_{0}(\Omega)$, we obtain the following corollary.
\begin{corollary}\label{Inclusion spaces}
Let $s\in(0,1), q\in \mathcal{P}(\Omega)$ and 
$p\in\mathcal{P}(\Omega\times\Omega)$. Then
\begin{equation*}
\mathcal{H}^{s,q(.),p(.,.)}_{0}(\Omega)\subset \mathcal{\tilde{{W}}}^{s,q(.),p(.,.)}_{0}(\Omega)\subset\mathcal{\tilde{{W}}}^{s,q(.),p(.,.)}(\Omega).	
\end{equation*} 
\end{corollary}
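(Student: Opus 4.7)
The plan is to treat the two inclusions separately and to lean on the Banach space structure of $\tilde{\mathcal{W}}^{s,q(.),p(.,.)}_{0}(\Omega)$ established in Theorem \ref{Banach quasi continuous}. The second inclusion $\tilde{\mathcal{W}}^{s,q(.),p(.,.)}_{0}(\Omega)\subset\tilde{\mathcal{W}}^{s,q(.),p(.,.)}(\Omega)$ is essentially a matter of unwinding the definitions: any $u$ on the left is, by definition, an element of $\tilde{\mathcal{W}}^{s,q(.),p(.,.)}(\Omega)$ after passing to its a.e.\ equivalent quasicontinuous representative, and under the standing a.e.\ identification of functions this places $u$ directly in $\tilde{\mathcal{W}}^{s,q(.),p(.,.)}(\Omega)$.

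For the first inclusion I would first make precise the assertion $\mathscr{C}^{\infty}_{0}(\Omega)\subset\tilde{\mathcal{W}}^{s,q(.),p(.,.)}_{0}(\Omega)$ that is flagged just before the corollary. Given $\varphi\in\mathscr{C}^{\infty}_{0}(\Omega)$, I would check that the extension of $\varphi$ by zero to all of $\mathbb{R}^{n}$ (which is globally continuous and therefore trivially $(s,q(.),p(.,.))$-r.q.c., the empty set serving as the exceptional open set), belongs to $\mathcal{W}^{s,q(.),p(.,.)}(\Omega)\cap\mathscr{C}_{c}(\overline{\Omega})$, and vanishes identically on $\Omega^{c}$ (hence $(s,q(.),p(.,.))$-r.q.e.\ there). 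This is precisely the data required by the definition of $\tilde{\mathcal{W}}^{s,q(.),p(.,.)}_{0}(\Omega)$, and it also shows that on the subspace of smooth compactly supported functions the new norm $\|\cdot\|_{\tilde{\mathcal{W}}^{s,q(.),p(.,.)}_{0}(\Omega)}$ coincides with the ordinary $\mathcal{W}^{s,q(.),p(.,.)}(\Omega)$-norm.

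With this identification I would close the argument as follows. Pick $u\in\mathcal{H}^{s,q(.),p(.,.)}_{0}(\Omega)$ and a sequence $(\varphi_{i})\subset\mathscr{C}^{\infty}_{0}(\Omega)$ converging to $u$ in $\mathcal{W}^{s,q(.),p(.,.)}(\Omega)$. By the norm identification just observed, $(\varphi_{i})$ is also Cauchy in $\tilde{\mathcal{W}}^{s,q(.),p(.,.)}_{0}(\Omega)$; by the completeness part of Theorem \ref{Banach quasi continuous} it has a limit $w\in\tilde{\mathcal{W}}^{s,q(.),p(.,.)}_{0}(\Omega)$. Extracting an a.e.\ convergent subsequence (using that $L^{q(.)}$-convergence delivers one) forces $w=u$ a.e.\ in $\Omega$, so $u=w\in\tilde{\mathcal{W}}^{s,q(.),p(.,.)}_{0}(\Omega)$. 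The main technical pitfall, and really the only step that is not bookkeeping, is the compatibility between the two norms on $\mathscr{C}^{\infty}_{0}(\Omega)$ and the check that the limits in the two Banach spaces agree; everything else is a direct invocation of the quasicontinuous representative machinery from Section~2 and the definition of the spaces.
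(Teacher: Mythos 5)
Your proof is correct and follows exactly the route the paper sketches in the sentence preceding the corollary: show $\mathscr{C}^{\infty}_{0}(\Omega)\subset\tilde{\mathcal{W}}^{s,q(.),p(.,.)}_{0}(\Omega)$, use the completeness of $\tilde{\mathcal{W}}^{s,q(.),p(.,.)}_{0}(\Omega)$ from Theorem \ref{Banach quasi continuous} together with the fact that its norm agrees with the $\mathcal{W}^{s,q(.),p(.,.)}(\Omega)$-norm, and take closures; the second inclusion is by definition. Your write-up just makes explicit what the paper leaves as a one-line remark.
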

 \begin{theorem}
	Let $s\in(0,1), q\in \mathcal{P}(\Omega)$ and 
	$p\in\mathcal{P}(\Omega\times\Omega)$. Assume that $ \mathcal{W}^{s,q(.),p(.,.)}(\Omega)\cap \mathscr{C}^{\infty}(\overline{\Omega})$ is dense in 
	$ \mathcal{W}^{s,q(.),p(.,.)}(\Omega)$. 
	Then
	\begin{equation*}
		\mathcal{{H}}^{s,q(.),p(.,.)}_{0}(\Omega)=\mathcal{\tilde{{W}}}^{s,q(.),p(.,.)}_{0}(\Omega)= \mathcal{W}^{s,q(.),p(.,.)}_{0}(\Omega)=\mathcal{\tilde{{W}}}^{s,q(.),p(.,.)}(\Omega).
	\end{equation*}  
\end{theorem}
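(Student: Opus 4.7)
The plan is to observe that this statement is essentially a corollary of the two immediately preceding results, with $\mathcal{\tilde{W}}^{s,q(.),p(.,.)}_{0}(\Omega)$ caught in a squeeze. From the earlier Theorem \ref{density condition}, under exactly the density hypothesis stated here we already know
\begin{equation*}
\mathcal{H}^{s,q(.),p(.,.)}_{0}(\Omega)=\mathcal{W}^{s,q(.),p(.,.)}_{0}(\Omega)=\mathcal{\tilde{W}}^{s,q(.),p(.,.)}(\Omega),
\end{equation*}
so three of the four spaces in the conclusion are already identified. It only remains to slot $\mathcal{\tilde{W}}^{s,q(.),p(.,.)}_{0}(\Omega)$ into this chain.

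That slotting is provided for free by Corollary \ref{Inclusion spaces}, which gives the inclusions
\begin{equation*}
\mathcal{H}^{s,q(.),p(.,.)}_{0}(\Omega)\subset \mathcal{\tilde{W}}^{s,q(.),p(.,.)}_{0}(\Omega)\subset\mathcal{\tilde{W}}^{s,q(.),p(.,.)}(\Omega)
\end{equation*}
without any density assumption. Combining this with the previous paragraph, the left and right endpoints of the chain coincide, which forces the middle term to coincide with them as well. This is the entire argument: two sentences of bookkeeping and no new technical work.

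The only point that needs a brief verification is that the equalities hold at the level of normed spaces, not just as sets of equivalence classes. For $\mathcal{H}^{s,q(.),p(.,.)}_{0}(\Omega)$, $\mathcal{W}^{s,q(.),p(.,.)}_{0}(\Omega)$ and $\mathcal{\tilde{W}}^{s,q(.),p(.,.)}(\Omega)$ this is trivial because all three inherit their norm directly from $\mathcal{W}^{s,q(.),p(.,.)}(\Omega)$. For $\mathcal{\tilde{W}}^{s,q(.),p(.,.)}_{0}(\Omega)$, whose norm is defined via the $(s,q(.),p(.,.))$-relatively quasicontinuous representative $\tilde{u}$, I would appeal to Theorem \ref{egalite quasipartout} to note that any two such representatives of the same a.e.-class coincide $(s,q(.),p(.,.))$-r.q.e.\ and in particular almost everywhere, so $\|u\|_{\tilde{\mathcal{W}}^{s,q(.),p(.,.)}_{0}(\Omega)}=\|\tilde{u}\|_{\mathcal{\tilde{W}}^{s,q(.),p(.,.)}(\Omega)}$ is the same ambient norm as on the surrounding spaces.

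I do not expect a genuine obstacle: all the substantive inputs (the existence of quasicontinuous representatives, the density-driven identification $\mathcal{H}_{0}=\mathcal{W}_{0}=\mathcal{\tilde{W}}$, and the inclusion chain of Corollary \ref{Inclusion spaces}) have already been established. The proof is a one-line deduction from these facts together with the elementary set-theoretic observation that $A\subset B\subset C$ with $A=C$ forces $A=B=C$.
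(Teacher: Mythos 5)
Your argument is correct, and it is genuinely shorter than the proof given in the paper. Interestingly, the paper opens its own proof with precisely the same two citations you use --- Corollary \ref{Inclusion spaces} and Theorem \ref{density condition} --- and then declares that ``it suffices to show $\mathcal{\tilde{W}}^{s,q(.),p(.,.)}_{0}(\Omega)\subset \mathcal{H}^{s,q(.),p(.,.)}_{0}(\Omega)$,'' after which it spends roughly a page on a direct approximation argument: truncating the quasicontinuous representative $\tilde{u}$, multiplying by capacitary cut-offs $1-\varphi_{\varepsilon}$ supported near sets of small relative capacity, and manufacturing a $\mathscr{C}^{\infty}_{0}(\Omega)$-sequence converging in the $\mathcal{W}^{s,q(.),p(.,.)}(\Omega)$-norm. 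But as you point out, once both cited results are admitted, that remaining inclusion is already automatic: Corollary \ref{Inclusion spaces} gives $\mathcal{H}^{s,q(.),p(.,.)}_{0}(\Omega)\subset\mathcal{\tilde{W}}^{s,q(.),p(.,.)}_{0}(\Omega)\subset\mathcal{\tilde{W}}^{s,q(.),p(.,.)}(\Omega)$ without any density hypothesis, and Theorem \ref{density condition} identifies the two endpoints, so the middle term is pinned down too. The paper's lengthy construction therefore re-derives by hand something it has, in effect, already conceded to be true from its own earlier results; your sandwich argument is the cleaner extraction of the same logic, and the constructive argument in the paper is better read as an independent, self-contained re-proof of the inclusion $\mathcal{\tilde{W}}^{s,q(.),p(.,.)}_{0}(\Omega)\subset\mathcal{H}^{s,q(.),p(.,.)}_{0}(\Omega)$ rather than a logically necessary step. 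One small remark: your appeal to Theorem \ref{egalite quasipartout} for the norm identification, while valid, is more than is needed --- the paper already records, immediately after defining $\mathcal{\tilde{W}}^{s,q(.),p(.,.)}_{0}(\Omega)$, that Lebesgue measure is dominated by the relative $(s,q(.),p(.,.))$-capacity, so the quasicontinuous representative agrees with $u$ a.e.\ and the norm on $\mathcal{\tilde{W}}^{s,q(.),p(.,.)}_{0}(\Omega)$ is simply the ambient $\mathcal{W}^{s,q(.),p(.,.)}(\Omega)$-norm in disguise. With that observation the chain of equalities holds at the level of normed spaces, as you wanted.
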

\begin{proof}
By corollary \ref{Inclusion spaces} and Theorem \ref{density condition} it suffices to show that
\begin{equation*}
 \mathcal{\tilde{{W}}}^{s,q(.),p(.,.)}_{0}(\Omega)\subset \mathcal{{H}}^{s,q(.),p(.,.)}_{0}(\Omega). 
\end{equation*}
Let $ u\in\mathcal{\tilde{{W}}}^{s,q(.),p(.,.)}_{0}(\Omega) $ and $  \tilde{u}$ its canonical representative. We need to show that there exist function $ \varphi_{i}\in\mathscr{C}^{\infty}_{0}(\Omega)$ which converges to $  \tilde{u}$. Without lost of generality, we can assume that $\tilde{u}$ is positive and bounded.

Let $ \varepsilon>0 $ and $ G\subset\Omega $ be an open set such that $\tilde{u}$ is continuous in $ G^{c} $ and 
$C^{s,\Omega}_{q(.),p(.,.)}(G)<\varepsilon$. Since $\tilde{u}$ is continuous in $ G^{c} $ and $\tilde{u}=0$ a.e. in $ \Omega^{c}$, it follows that $\tilde{u}=0$ a.e. in $ (\Omega \cup G)^{c}$. Let $ \varphi_{\varepsilon}\in\mathcal{W}^{s,q(.),p(.,.)}(\Omega)$ be such that $ 0\leqslant \varphi_{\varepsilon}\leqslant1$, $ \varphi_{\varepsilon}=1 $ in $ G $, $ \left\| \varphi_{\varepsilon}\right\|_{L^{p(.,.)}(\Omega)}<\varepsilon$ and $ \rho^{s,\Omega}_{q(.),p(.,.)}(\varphi_{\varepsilon})< \varepsilon$. For $ 0<\delta<1 $, we define $\tilde{u}_{\delta}(x):=\max\left\lbrace \tilde{u}(x)-\delta,0 \right\rbrace$. Then, there exists a bounded closed set $ F\subset G^{c}$ such that $\tilde{u}_{\delta}(x)=0$ a.e. in $(F \cup G)^{c}$. Hence $ (1-\varphi_{\varepsilon})\tilde{u}_{\delta}=0 $ in $ F^{c} $ and $\text{supp}\left\lbrace (1-\varphi_{\varepsilon})\tilde{u}_{\delta} \right\rbrace\subset F$. Thus this function has compact support in $\Omega$. Next, we claim that $ (1-\varphi_{\varepsilon})\tilde{u}_{\delta}\longrightarrow \tilde{u} $ as $ \varepsilon,\delta\longrightarrow 0$. Observe that
\begin{equation*}
\left\|\tilde{u}-(1-\varphi_{\varepsilon})\tilde{u}_{\delta} \right\|_{\mathcal{W}^{s,q(.),p(.,.)}(\Omega)}\leq \left\|\tilde{u}-\tilde{u}_{\delta} \right\|_{\mathcal{W}^{s,q(.),p(.,.)}(\Omega)}+\left\|\varphi_{\varepsilon}\tilde{u}_{\delta} \right\|_{\mathcal{W}^{s,q(.),p(.,.)}(\Omega)} 	
\end{equation*}
and
\begin{equation*}
\left\|\tilde{u}-\tilde{u}_{\delta}\right\|_{\mathcal{W}^{s,q(.),p(.,.)}(\Omega)}\leqslant \delta \left\|\chi_{\text{supp} (\tilde{u})}\right\|_{{L}^{q(.)}(\Omega)}+\left\| \psi_{\delta}(x,y) \right\|_{L^{p(.,.)}(\Omega\times\Omega)}, 
\end{equation*}
where 
\begin{equation*}
 \psi_{\delta}(x,y):=\chi_{\left\lbrace 0<\tilde{u}(x)<\delta\right\rbrace\times \left\lbrace 0<\tilde{u}(y)<\delta\right\rbrace} \dfrac{\left|\tilde{u}(x)-\tilde{u}(y)\right|}{\left|x-y \right|^{n+\frac{s}{p(x,y)}}}.
\end{equation*}
Hence $\left\|\tilde{u}-\tilde{u}_{\delta}\right\|_{\mathcal{W}^{s,q(.),p(.,.)}(\Omega)}\longrightarrow 0$ as $ \delta\longrightarrow 0$.\\  
Next, we show that $ \left\|\varphi_{\varepsilon}\tilde{u} \right\|_{\mathcal{W}^{s,q(.),p(.,.)}(\Omega)}\longrightarrow 0 $ as $\varepsilon\longrightarrow 0 $. From \cite [Proposition 3]{BA3}, it suffices to prove that $\rho^{s,\Omega}_{q(.),p(.,.)}(\varphi_{\varepsilon}\tilde{u}_{\delta})\longrightarrow 0$ as $ \varepsilon\longrightarrow 0 $. We have that  
\begin{eqnarray*}
	\rho^{s,\Omega}_{q(.),p(.,.)}(\varphi_{\varepsilon}\tilde{u}):=\int_{\Omega}\left| \varphi_{\varepsilon}(x)\tilde{u}(x)\right|^{q(x)}\ dx+\int_{\Omega}\int_{\Omega} \frac{\left|(\varphi_{\varepsilon}\tilde{u})(x)-(\varphi_{\varepsilon}\tilde{u})(y)\right|^{p(x,y)}}{\left|x-y\right|^{n+ sp(x,y)}}\ dx \ dy.	
\end{eqnarray*}
Since $\tilde{u}$ is bounded, we obtain that
\begin{eqnarray*}
\int_{\Omega}\left| \varphi_{\varepsilon}(x)\tilde{u}(x)\right|^{q(x)}\ dx&\leqslant& (\left\| \tilde{u}\right\|_{\infty}^{q^{+}}+\left\| \tilde{u}\right\|_{\infty}^{q^{-}})\int_{\Omega}\left| \varphi_{\varepsilon}(x)\right|^{q(x)}\ dx\\
&\leqslant& (\left\| \tilde{u}\right\|_{\infty}^{q^{+}}+\left\| \tilde{u}\right\|_{\infty}^{q^{-}}) 	\rho^{s,\Omega}_{q(.),p(.,.)}(\varphi_{\varepsilon})\\
&\leqslant& \varepsilon(\left\| \tilde{u}\right\|_{\infty}^{q^{+}}+\left\| \tilde{u}\right\|_{\infty}^{q^{-}}). 
\end{eqnarray*}
Hence $ \int_{\Omega}\left| \varphi_{\varepsilon}(x)\tilde{u}(x)\right|^{q(x)} \ dx \longrightarrow 0$ as $ \varepsilon\longrightarrow0$. On the other hand
\begin{eqnarray*}
\left|\varphi_{\varepsilon}(x)\tilde{u}(x)-\varphi_{\varepsilon}(y)\tilde{u}(y)\right|^{p(x,y)}&\leq& 2^{p^{+}-1}\left|\tilde{u}(x) \right|^{p(x,y)}\left|\varphi_{\varepsilon}(x)-\varphi_{\varepsilon}(y) \right|^{p(x,y)}\\
&+&
2^{p^{+}-1}\left|\varphi_{\varepsilon}(y) \right|^{p(x,y)}\left|\tilde{u}(x)-\tilde{u}(y) \right|^{p(x,y)}.   
\end{eqnarray*}
Hence
\begin{eqnarray*}
\int_{\Omega}\int_{\Omega} \frac{\left|(\varphi_{\varepsilon}\tilde{u})(x)-(\varphi_{\varepsilon}\tilde{u})(y)\right|^{p(x,y)}}{\left|x-y\right|^{n+ sp(x,y)}} \ dx \ dy&\leq& 2^{p^{+}-1}\int_{\Omega}\int_{\Omega}\dfrac{\left|\tilde{u}(x)\right|^{p(x,y)}\left|\varphi_{\varepsilon}(x)-\varphi_{\varepsilon}(y) \right|^{p(x,y)}}{\left|x-y\right|^{n+ sp(x,y)}} \ dx \ dy\\
&+&2^{p^{+}-1}\int_{\Omega}\int_{\Omega}\dfrac{\left|\varphi_{\varepsilon}(y)\right|^{p(x,y)}\left|\tilde{u})(x)-\tilde{u})(y) \right|^{p(x,y)}}{\left|x-y\right|^{n+ sp(x,y)}} \ dx \ dy\\
&\leqslant& (\left\| \tilde{u}\right\|_{\infty}^{p^{+}}+\left\| \tilde{u}\right\|_{\infty}^{p^{-}}) 	\rho^{s,\Omega}_{q(.),p(.,.)}(\varphi_{\varepsilon})\\
&+& 2^{p^{+}-1}\int_{\Omega}\int_{\Omega}\dfrac{\left|\varphi_{\varepsilon}(y)\right|^{p(x,y)}\left|\tilde{u})(x)-\tilde{u})(y) \right|^{p(x,y)}}{\left|x-y\right|^{n+ sp(x,y)}} \ dx \ dy.  
\end{eqnarray*}
Since $ \varphi_{\varepsilon}\longrightarrow 0$ in $ L^{p(.,.)}(\Omega\times\Omega) $, as $ \varepsilon\longrightarrow 0 $, we can choose a sequence $ \varphi_{i} $ which tends to $ 0 $ a.e. in $\Omega\times\Omega$. Then
\begin{equation*}
\int_{\Omega}\dfrac{\left|\varphi_{i}(y)\right|^{p(x,y)}\left|\tilde{u})(x)-\tilde{u})(y) \right|^{p(x,y)}}{\left|x-y\right|^{n+ sp(x,y)}} \ dx \ dy\longrightarrow 0
\end{equation*}
by the Lebesgue dominated convergence theorem with $\dfrac{\left|\tilde{u}(x)-\tilde{u}(y) \right|^{p(x,y)}}{\left|x-y\right|^{n+ sp(x,y)}}$ as a dominant. Therefore $\rho^{s,\Omega}_{q(.),p(.,.)}(\varphi_{\varepsilon}\tilde{u})\longrightarrow 0$, as $ \varepsilon\longrightarrow 0 $. Hence $ (1-\varphi_{\varepsilon})\tilde{u}_{\delta}\longrightarrow \tilde{u}$ as $ \varepsilon,\delta\longrightarrow 0 $. Now, let $ \psi_{i}\in \mathscr{C}^{\infty}(\Omega)$ which tend to $(1-\varphi_{\varepsilon})\tilde{u}_{\delta}$. Let $ \eta\in \mathscr{C}^{\infty}_{0}(\Omega)$ such that $ \eta=1 $ in $\text{supp}\left\lbrace (1-\varphi_{\varepsilon}) \tilde{u}_{\delta}\right\rbrace$. By construction, we have that $ \eta\psi_{i}\in \mathscr{C}^{\infty}_{0}(\Omega)$. We set $v:=(1-\varphi_{\varepsilon})\tilde{u}_{\delta}$, then
\begin{eqnarray*}
\rho^{s,\Omega}_{q(.),p(.,.)}(v-\eta\psi_{i})&=&\int_{\text{supp}\left\lbrace v(x)\right\rbrace }\left| v(x)-\psi_{i}(x)\right|^{q(x)} \ dx\\
&+&\int_{\text{supp}\left\lbrace v(x)\right\rbrace }\int_{\text{supp}\left\lbrace v(y)\right\rbrace } 
\dfrac{\left|v(x)-\psi_{i}(x)-v(y)+\psi_{i}(y) \right|^{p(x,y)}}{\left|x-y\right|^{n+ sp(x,y)}}  \ dx  \ dy\\
&+& \int_{\Omega\setminus\text{supp}\left\lbrace v(x)\right\rbrace }\left| \eta(x)\psi_{i}(x)\right|^{q(x)} \ dx \\
&+& \int_{\Omega\setminus\text{supp}\left\lbrace v(x)\right\rbrace } \int_{\Omega\setminus\text{supp}\left\lbrace v(y)\right\rbrace } \dfrac{\left| \eta(y)\psi_{i}(y)-\eta(x)\psi_{i}(x)\right|^{p(x,y)} }{\left|x-y \right|^{n+sp(x,y)}} \ dx  \ dy.  
\end{eqnarray*}
Since $ \psi_{i}\longrightarrow v$, then 
\begin{equation*}
\int_{\text{supp}\left\lbrace v(x)\right\rbrace }\left| v(x)-\psi_{i}(x)\right|^{q(x)} \ dx\longrightarrow 0
\end{equation*} and 
\begin{equation*}
 \int_{\text{supp}\left\lbrace v(x)\right\rbrace }\int_{\text{supp}\left\lbrace v(y)\right\rbrace } 
\dfrac{\left|v(x)-\psi_{i}(x)-v(y)+\psi_{i}(y) \right|^{p(x,y)}}{\left|x-y\right|^{n+ sp(x,y)}}  \ dx  \ dy\longrightarrow 0.   
\end{equation*}
Moreover,
\begin{eqnarray*}
\int_{\Omega\setminus\text{supp}\left\lbrace v(x)\right\rbrace }\left|\eta(x)\psi_{i}(x) \right|^{q(x)} \ dx\leqslant (\left\| \eta\right\|_{\infty}^{q^{+}}+\left\| \eta\right\|_{\infty}^{q^{-}})\int_{\Omega\setminus\text{supp}\left\lbrace v(x)\right\rbrace }\left|\psi_{i}(x) \right|^{q(x)} \ dx 
\end{eqnarray*}
and
\begin{eqnarray*}
\int_{\Omega\setminus\text{supp}\left\lbrace v(x)\right\rbrace } \int_{\Omega\setminus\text{supp}\left\lbrace v(y)\right\rbrace } \dfrac{\left| \eta(y)\psi_{i}(y)-\eta(x)\psi_{i}(x)\right|^{p(x,y)} }{\left|x-y \right|^{n+sp(x,y)}} \ dx  \ dy
\end{eqnarray*}
is less than
\begin{eqnarray*} 
 C(p^{+},p^{-},\left\| \eta\right\|_{\infty})\int_{\Omega\setminus\text{supp}\left\lbrace v(x)\right\rbrace } \int_{\Omega\setminus\text{supp}\left\lbrace v(y)\right\rbrace }\left( \dfrac{\left| \psi_{i}(x)-\psi_{i}(y)\right|^{p(x,y)} }{\left|x-y \right|^{n+sp(x,y)}} \ + \left| \psi_{i}(x)\right|^{p(x,y)}  \right) \ dx  \ dy.   
\end{eqnarray*} 
Since $\psi_{i}\longrightarrow v  $ and $ v=0 $ in $ \Omega\setminus\text{supp}\left\lbrace v\right\rbrace$, we get that
\begin{equation*}
\int_{\Omega\setminus\text{supp}\left\lbrace v(x)\right\rbrace }\left| \eta(x)\psi_{i}(x)\right|^{q(x)} \ dx\longrightarrow 0
\end{equation*}
and 
 \begin{equation*}
 \int_{\Omega\setminus\text{supp}\left\lbrace v(x)\right\rbrace } \int_{\Omega\setminus\text{supp}\left\lbrace v(y)\right\rbrace } \dfrac{\left| \eta(y)\psi_{i}(y)-\eta(x)\psi_{i}(x)\right|^{p(x,y)} }{\left|x-y \right|^{n+sp(x,y)}} \ dx  \ dy\longrightarrow 0.
 \end{equation*}
 Hence $ \rho^{s,\Omega}_{q(.),p(.,.)}(v-\eta\psi_{i})\longrightarrow 0$. Therefore, we have constructed a sequence $ (\eta\psi_{i})_{i}\in \mathscr{C}^{\infty}_{0}(\Omega)$ which approaches $v $. But $ v $ can be chosen arbitrarily close to $\tilde{u} $, and so we get a sequence of $ \mathscr{C}^{\infty}_{0}(\Omega) $ functions tending to $\tilde{u} $.     
\end{proof}
\begin{corollary}
	Let $s\in(0,1), q\in\mathcal{P}^{log}(\Omega)$ and 
	$p\in\mathcal{P}^{log}(\Omega\times\Omega)$. Assume that $ \Omega $ is a $\mathcal{W}^{s,q(.),p(.,.)}_{0}(\Omega)$-extension domain. Then
	\begin{equation*}
		\mathcal{{H}}^{s,q(.),p(.,.)}_{0}(\Omega)= \mathcal{W}^{s,q(.),p(.,.)}_{0}(\Omega)=\mathcal{\tilde{{W}}}^{s,q(.),p(.,.)}(\Omega)=\mathcal{\tilde{{W}}}^{s,q(.),p(.,.)}_{0}(\Omega).
	\end{equation*}    
\end{corollary}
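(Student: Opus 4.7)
The plan is to deduce this corollary by combining two ingredients already assembled in the paper: (i) the log-Hölder hypotheses on $q$ and $p$, together with the extension domain assumption, force $\mathcal{W}^{s,q(.),p(.,.)}(\Omega)\cap \mathscr{C}^{\infty}(\overline{\Omega})$ to be dense in $\mathcal{W}^{s,q(.),p(.,.)}(\Omega)$ via \cite[Theorem 3.3]{BA1} (this is exactly the mechanism already used for Corollary \ref{ p log condition density}); and (ii) once that density holds, the preceding (unnumbered) theorem collapses all four spaces to a single space. So the proof is essentially a one-line chaining of these two results.

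More concretely, I would first observe that $\mathcal{P}^{log}(\Omega)\subset \mathcal{P}(\Omega)$ and $\mathcal{P}^{log}(\Omega\times\Omega)\subset \mathcal{P}(\Omega\times\Omega)$, so that the smoothness/density machinery of Baalal–Berghout applies. Next I would invoke \cite[Theorem 3.3]{BA1} to conclude that under the $(B\text{-}B)$ and log-Hölder regularity of the exponents together with the extension property, smooth functions up to the boundary are dense, i.e.
\begin{equation*}
\overline{\mathcal{W}^{s,q(.),p(.,.)}(\Omega)\cap \mathscr{C}^{\infty}(\overline{\Omega})}^{\,\mathcal{W}^{s,q(.),p(.,.)}(\Omega)} = \mathcal{W}^{s,q(.),p(.,.)}(\Omega).
\end{equation*}
This is precisely the hypothesis required by the preceding theorem, so one may then conclude
\begin{equation*}
\mathcal{{H}}^{s,q(.),p(.,.)}_{0}(\Omega)=\mathcal{\tilde{{W}}}^{s,q(.),p(.,.)}_{0}(\Omega)= \mathcal{W}^{s,q(.),p(.,.)}_{0}(\Omega)=\mathcal{\tilde{{W}}}^{s,q(.),p(.,.)}(\Omega),
\end{equation*}
which is exactly the claim.

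There is no substantive obstacle here; the corollary is a direct specialization. The only thing to be mildly careful about is that the previous theorem already gives the full four-way equality from the density hypothesis alone (it does not need any regularity of the exponent by itself), so the log-Hölder and extension domain assumptions are used only to \emph{supply} that density via \cite{BA1}. In other words, the work is entirely done by (a) Theorem \ref{density condition} / Corollary \ref{ p log condition density} on the side $\mathcal{{H}}^{s,q(.),p(.,.)}_{0}(\Omega)= \mathcal{W}^{s,q(.),p(.,.)}_{0}(\Omega)=\mathcal{\tilde{{W}}}^{s,q(.),p(.,.)}(\Omega)$, and (b) the immediately preceding theorem for the additional identification with $\mathcal{\tilde{{W}}}^{s,q(.),p(.,.)}_{0}(\Omega)$; the corollary simply records the conjunction of the two in the convenient log-Hölder extension-domain setting.
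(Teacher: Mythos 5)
Your proposal is correct and is exactly the argument the paper intends: the log-Hölder regularity of $q$ and $p$ together with the extension-domain hypothesis supplies the density of $\mathcal{W}^{s,q(.),p(.,.)}(\Omega)\cap \mathscr{C}^{\infty}(\overline{\Omega})$ via \cite[Theorem 3.3]{BA1}, after which the four-way equality is immediate from the theorem preceding the corollary (the paper in fact leaves this corollary without an explicit proof, precisely because it is this one-line specialization). One minor point worth flagging: the corollary's hypothesis reads ``$\mathcal{W}^{s,q(.),p(.,.)}_{0}(\Omega)$-extension domain,'' which is evidently a typo for ``$\mathcal{W}^{s,q(.),p(.,.)}(\Omega)$-extension domain'' as defined in the paper and as used in Corollary \ref{ p log condition density}; your reading silently adopts the corrected hypothesis, which is the right thing to do.
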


 Next, we give a relative capacity criterium for removable subsets for $\tilde{\mathcal{W}}^{s,q(.),p(.,.)}_{0}(\Omega)$.
 \begin{definition}
 	Let $ \Omega\subset \mathbb{R}^{n}$ be an open set. We call a subset $ N $ of $ \Omega $ is removable for $\mathcal{\tilde{{W}}}^{s,q(.),p(.,.)}_{0}(\Omega)$ if 
 	\begin{equation*}
 		\mathcal{\tilde{{W}}}^{s,q(.),p(.,.)}_{0}(\Omega)=\mathcal{\tilde{{W}}}^{s,q(.),p(.,.)}_{0}(\Omega\setminus N).	
 	\end{equation*}  
 \end{definition}
\begin{theorem}
Let $ N $ be a subset of $\Omega$. Then the following assertions are equivalent.
\begin{enumerate}
	\item $ C_{q(.),p(.,.)}^{s,\Omega}(N\cap\Omega)=0$;
	\item $\tilde{\mathcal{W}}^{s,q(.),p(.,.)}_{0}(\Omega)=\tilde{\mathcal{W}}^{s,q(.),p(.,.)}_{0}(\Omega\setminus N)$.
\end{enumerate}
\end{theorem}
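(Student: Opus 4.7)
The plan is to prove the two implications separately. For $(1)\Rightarrow(2)$ the argument is a bookkeeping exercise on polar and null sets, and for $(2)\Rightarrow(1)$ I would detect the capacity of $N$ by testing against compactly supported smooth cutoffs.

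For $(1)\Rightarrow(2)$, I would first observe that the hypothesis $C^{s,\Omega}_{q(.),p(.,.)}(N)=0$ forces $|N|=0$ (since the excerpt already records the inequality $|E|\le C^{s,\Omega}_{q(.),p(.,.)}(E)$). Consequently $N$ has empty interior, so $\overline{\Omega\setminus N}=\overline{\Omega}$, and the modular integrals defining $\mathcal{W}^{s,q(.),p(.,.)}(\Omega)$ and $\mathcal{W}^{s,q(.),p(.,.)}(\Omega\setminus N)$ coincide over the complement of the null set $N$; the ambient spaces $\tilde{\mathcal{W}}^{s,q(.),p(.,.)}(\Omega)$ and $\tilde{\mathcal{W}}^{s,q(.),p(.,.)}(\Omega\setminus N)$ are then naturally identified, and so are the two relative capacities. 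Now take $u\in\tilde{\mathcal{W}}^{s,q(.),p(.,.)}_{0}(\Omega)$ with $(s,q(.),p(.,.))$-r.q.c.\ representative $\tilde{u}$ satisfying $\tilde{u}=0$ r.q.e.\ on $\Omega^{c}$. Redefining $\tilde{u}$ to be zero on the polar set $N$ alters it only on a set of capacity zero, and the new function is still r.q.c.\ (union of two polar sets is polar, by $(C_{5})$), agrees with $u$ a.e.\ on $\Omega\setminus N$, and vanishes r.q.e.\ on $N\cup\Omega^{c}=(\Omega\setminus N)^{c}$. Hence $u\in\tilde{\mathcal{W}}^{s,q(.),p(.,.)}_{0}(\Omega\setminus N)$. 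The reverse inclusion is immediate, since any representative vanishing r.q.e.\ on $N\cup\Omega^{c}$ vanishes in particular r.q.e.\ on $\Omega^{c}$.

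For $(2)\Rightarrow(1)$ I would argue by contradiction. Exhaust $\Omega$ by an increasing sequence of compact sets $K_{n}\Subset\Omega$ with $\bigcup_{n}K_{n}=\Omega$, and for each $n$ choose $\varphi_{n}\in\mathscr{C}^{\infty}_{0}(\Omega)$ with $0\le\varphi_{n}\le 1$ and $\varphi_{n}\equiv 1$ on $K_{n}$. By Corollary \ref{Inclusion spaces}, $\varphi_{n}\in\mathcal{H}^{s,q(.),p(.,.)}_{0}(\Omega)\subset\tilde{\mathcal{W}}^{s,q(.),p(.,.)}_{0}(\Omega)$, and using the assumed equality also $\varphi_{n}\in\tilde{\mathcal{W}}^{s,q(.),p(.,.)}_{0}(\Omega\setminus N)$. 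The latter supplies a r.q.c.\ representative of $\varphi_{n}$ that vanishes r.q.e.\ on $N\cup\Omega^{c}$; but $\varphi_{n}$ itself is continuous, hence r.q.c., so by the uniqueness statement of Theorem \ref{egalite quasipartout}, this representative equals $\varphi_{n}$ r.q.e. Therefore $\varphi_{n}=0$ r.q.e.\ on $N$. Since $\varphi_{n}\equiv 1$ on $K_{n}\cap N$, this forces $C^{s,\Omega}_{q(.),p(.,.)}(K_{n}\cap N)=0$ for every $n$. Writing $N=\bigcup_{n}(K_{n}\cap N)$ and applying countable subadditivity $(C_{5})$, we conclude $C^{s,\Omega}_{q(.),p(.,.)}(N)=0$.

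The step I expect to be the hardest is the identification of the two ambient spaces and the two capacities in $(1)\Rightarrow(2)$: one must verify carefully that polar sets for $C^{s,\Omega}_{q(.),p(.,.)}$ lying inside $\Omega\setminus N$ remain polar for $C^{s,\Omega\setminus N}_{q(.),p(.,.)}$, and that the class of admissible functions used to compute each capacity is unchanged (as equivalence classes modulo a.e.\ equality) when $N$ is excised. Once these compatibilities are established, the modification of $\tilde{u}$ on $N$ goes through cleanly and the remaining manipulations are routine.
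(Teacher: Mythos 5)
Your proposal takes essentially the same route as the paper. For $(1)\Rightarrow(2)$ you modify the r.q.c.\ representative on the polar set $N$; for $(2)\Rightarrow(1)$ you test the assumed equality against a family of functions positive on an exhaustion of $N$, invoke the uniqueness Theorem \ref{egalite quasipartout} to conclude these functions vanish r.q.e.\ on $N$, and then use $(C_5)$. The only material difference is the choice of test functions in $(2)\Rightarrow(1)$: you use smooth cutoffs $\varphi_n\in\mathscr{C}^{\infty}_{0}(\Omega)$, which land in $\tilde{\mathcal{W}}^{s,q(.),p(.,.)}_{0}(\Omega)$ directly via $\mathcal{H}^{s,q(.),p(.,.)}_{0}(\Omega)$ and Corollary \ref{Inclusion spaces}, whereas the paper builds Lipschitz products $u_iv_i$ (with $u_i(x)=\max\{1-\operatorname{dist}(x,N\cap\Omega_i),0\}$ and $v_i(x)=\operatorname{dist}(x,\Omega_i^{c})$) and needs Proposition \ref{produit dans Sobolev} to place the product in the zero-trace space. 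Your variant is a modest simplification, since it sidesteps that multiplication lemma.

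Two caveats, both of which apply to the paper's own argument as well. First, the compatibility issue you flag — that polar sets and r.q.c.\ functions relative to $\Omega$ and relative to $\Omega\setminus N$ must be reconciled — is real and is not addressed in the paper either; monotonicity $(C_2)$ only gives $C^{s,\Omega}_{q(.),p(.,.)}\le C^{s,\Omega\setminus N}_{q(.),p(.,.)}$ on subsets of $\Omega\setminus N$, which is the wrong direction for much of what is needed, so a separate argument is required. Second, in $(2)\Rightarrow(1)$ you apply Theorem \ref{egalite quasipartout} to pass from equality a.e.\ on $\Omega\setminus N$ to equality a.e.\ on $\Omega$; this implicitly uses $|N|=0$, which the paper invokes explicitly (``since $|N|=0$'') but does not derive. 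It is not circular — comparing a smooth cutoff with its zero-trace representative forces $|N|=0$ directly from (2) — but in a careful write-up this should be established as a preliminary step before the uniqueness theorem is applied.
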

\begin{proof}
$(1)\Longrightarrow (2)$: We assume that $ C_{q(.),p(.,.)}^{s,\Omega}(N\cap\Omega)=0$. First, it is clear that $\tilde{\mathcal{W}}^{s,q(.),p(.,.)}_{0}(\Omega\setminus N)\subset \tilde{\mathcal{W}}^{s,q(.),p(.,.)}_{0}(\Omega)$. To prove the converse inclusion, let $u\in\tilde{\mathcal{W}}^{s,q(.),p(.,.)}_{0}(\Omega)$ and $\tilde{u}\in\tilde{\mathcal{W}}^{s,q(.),p(.,.)}(\Omega)$ be it is relative $(s,q(.),p(.,.))$-quasicontinuous representative. Hence $  \tilde{u}=u$ a.e. in $\Omega\setminus N$ and $\tilde{u}=0 \ (s,q(.),p(.,.)$-r.q.e in $(\Omega\setminus N)^{c}$. Hence $ u $ restricted to $ \Omega\setminus N $ belongs to $\tilde{\mathcal{W}}^{s,q(.),p(.,.)}_{0}(\Omega\setminus N)$ and
\begin{equation*}
	\left\| u_{|(\Omega\setminus N)} \right\|_{ \tilde{\mathcal{W}}^{s,q(.),p(.,.)}_{0}(\Omega\setminus N)}=\left\|\tilde{u} \right\|_{\tilde{\mathcal{W}}^{s,q(.),p(.,.)}(\Omega)}. 	
\end{equation*}
Thus $\tilde{\mathcal{W}}^{s,q(.),p(.,.)}_{0}(\Omega)\subset\tilde{\mathcal{W}}^{s,q(.),p(.,.)}_{0}(\Omega\setminus N)$.\\
$(2)\Longrightarrow (1)$: Let $ N $ be a subset of $\Omega$. We assume that 
\begin{equation*}
	\tilde{\mathcal{W}}^{s,q(.),p(.,.)}_{0}(\Omega)=\tilde{\mathcal{W}}^{s,q(.),p(.,.)}_{0}(\Omega\setminus N). 
\end{equation*}
Let $ x_{0}\in\Omega $ and write
\begin{equation*}
	\Omega_{i}:=B(x_{0},i)\cap\left\lbrace x\in\Omega:dist(x,\Omega^{c})>\dfrac{1}{i} \right\rbrace, i=1,2,\ldots 
\end{equation*}
We define $u_{i}:\Omega\longrightarrow \mathbb{R}$ by $u_{i}:=\max\left\lbrace 1-dist(x,N\cap\Omega_{i}),0\right\rbrace$, $i=1,2,\ldots$. Then $ u_{i}\in\tilde{\mathcal{W}}^{s,q(.),p(.,.)}(\Omega)$ is continuous, $ 0\leqslant u_{i}\leqslant 1 $ and $ u_{i}=1 $ in $N\cap\Omega_{i}$. We define $v_{i}:\Omega_{i}\longrightarrow \mathbb{R}$ by 
$v_{i}:=dist(x,\Omega_{i}^{c})$, $i=1,2,\ldots$ Then $ v_{i} $ is $(s,q(.),p(.,.))$-relatively quasicontinuous and $ v_{i}\in \tilde{\mathcal{W}}^{s,q(.),p(.,.)}_{0}(\Omega_{i})\subset\tilde{\mathcal{W}}^{s,q(.),p(.,.)}_{0}(\Omega)$. By Proposition \ref{produit dans Sobolev}, we have that $ u_{i}v_{i}\in\tilde{\mathcal{W}}^{s,q(.),p(.,.)}_{0}(\Omega)=\tilde{\mathcal{W}}^{s,q(.),p(.,.)}_{0}(
\Omega\setminus N)$, $i=1,2,\ldots$ We fix $ i $. If $ w $ is such that $(s,q(.),p(.,.))$-relatively quasicontinuous function that $w=u_{i}v_{i}$ a.e. in $\Omega\setminus N$, then $w=u_{i}v_{i} \ \text{a.e in} \ \Omega$ since $ \left|N \right|=0$. Now, by Theorem \ref{egalite quasipartout} we get that $w=u_{i}v_{i} \ (s,q(.),p(.,.)) \ \text{-r.q.e in} \ \Omega$. In particular, $w=u_{i}v_{i}>0 \ (s,q(.),p(.,.)) \ \text{-r.q.e in} \ \Omega_{i}\cap N$. On the other hand, since $ u_{i}v_{i}\in \tilde{\mathcal{W}}^{s,q(.),p(.,.)}_{0}(\Omega\setminus N)$, we can define $ w=u_{i}v_{i}=0 \ (s,q(.),p(.,.)) \ \text{-r.q.e in} \ (\Omega\setminus N)^{c}$. In particular, we have $ w=u_{i}v_{i}=0 \ (s,q(.),p(.,.)) \ \text{-r.q.e in} \ N\setminus\Omega_{i}$. This is possible only if $C_{q(.),p(.,.)}^{s,\Omega}(N\setminus\Omega_{i})=0$ for $i=1,2,\ldots$ By the proprieties of the fractional relative $(s,q(.),p(.,.))$-capacity, we get that
\begin{eqnarray*}
 C_{q(.),p(.,.)}^{s,\Omega}(N)&\leqslant& C_{q(.),p(.,.)}^{s,\Omega}\left( \bigcup_{i=1}^{\infty} (N\cap \Omega _{i})\right)\\
 &\leqslant& \sum _{i=1}^{\infty}C_{q(.),p(.,.)}^{s,\Omega}( N\cap \Omega _{i})=0.
\end{eqnarray*}
This complete the proof.    
\end{proof}
\bibliographystyle{amsplain}

\end{document}